  \newcommand{\twodigitspage}{
   \ifnum \value{page} < 10
   0
   \fi
   \thepage
  }
  \tikzset{
    slanted/.style={rotate=-90, anchor = south},
    slantedswap/.style={rotate=-90, anchor=north, outer sep=0.75mm}
}
   \theoremstyle{definition}
   \newtheorem{definition}{Definition}[section]
   \newtheorem{remark}[definition]{Remark}
   \newtheorem{example}[definition]{Example}
   \newtheorem*{acknowledgments}{Acknowledgements}
   \theoremstyle{plain}   
   \newtheorem{proposition}[definition]{Proposition}
   \newtheorem{lemma}[definition]{Lemma}
   \newtheorem{theorem}[definition]{Theorem}
   \newtheorem{corollary}[definition]{Corollary}
   \newtheorem{hypothesis}[definition]{Hypothesis}
   \newtheorem{conjecture}[definition]{Conjecture}
   \newtheorem{question}[definition]{Question}
   \newtheorem{aproposition}{Proposition}[subsection]
   \newtheorem{ahypothesis}[aproposition]{Hypothesis}
   \newtheorem{aexample}[aproposition]{Example}
   \newtheorem{alemma}[aproposition]{Lemma}
   \newtheorem{aquestion}[aproposition]{Question}
   \theoremstyle{definition}
   \newtheorem{aremark}[aproposition]{Remark}
   \newcommand{\ur}{{\operatorname{ur}}}
\newcommand{\Hom}{\mathrm{Hom}}
\newcommand{\Aut}{\operatorname{Aut}}
\newcommand{\Gal}{\operatorname{Gal}}
\newcommand{\Ind}{\operatorname{Ind}}
\newcommand{\Spec}{\operatorname{Spec}}
\newcommand{\ram}{\mathrm{ram}}
\newcommand{\Ext}{{\operatorname{Ext}}}
\newcommand{\catname}[1]{\textnormal{{\textsf{#1}}}}
\newcommand{\Der}{\catname{D}}
\newcommand{\R}{\catname{R}}
\renewcommand{\Ext}{\catname{Ext}}
\newcommand{\Det}{\catname{d}}
\newcommand{\Cl}{\operatorname{Cl}}
\newcommand{\Iw}{\mathrm{Iw}}
\newcommand{\tor}{\mathrm{tor}}
\newcommand{\bidual}{\bigcap\nolimits}
\newcommand{\exprod}{\bigwedge\nolimits}
\newcommand{\ES}{\operatorname{ES}}
\newcommand{\VS}{\operatorname{VS}}
\newcommand{\HS}{\operatorname{HS}}
\newcommand{\Ann}{\operatorname{Ann}}
\newcommand{\Fitt}{\operatorname{Fitt}}
\newcommand{\ab}{\mathrm{ab}}
\newcommand{\cyc}{\mathrm{cyc}}
   \newcommand{\bLambda}{{\mathpalette\makebLambda\relax}}
\newcommand{\makebLambda}[2]{%
  \raisebox{\depth}{\scalebox{1}[-1]{$\mathsurround=0pt#1\mathbb{V}$}}%
}
\newcommand{\colim@}[2]{%
  \vtop{\m@th\ialign{##\cr
    \hfil$#1\operator@font colim$\hfil\cr
    \noalign{\nointerlineskip\kern1.5\ex@}#2\cr
    \noalign{\nointerlineskip\kern-\ex@}\cr}}%
}
\newcommand{\colim}{%
  \mathop{\mathpalette\colim@{\rightarrowfill@\scriptscriptstyle}}\nmlimits@
}
\renewcommand{\varprojlim}{%
  \mathop{\mathpalette\varlim@{\leftarrowfill@\scriptscriptstyle}}\nmlimits@
}
\renewcommand{\varinjlim}{%
  \mathop{\mathpalette\varlim@{\rightarrowfill@\scriptscriptstyle}}\nmlimits@
}
  \DeclareMathOperator*{\im}{im}
  \newcommand*{\cores}{\mathrm{cores}}
  \DeclareMathOperator{\Fr}{\mathrm{Fr}}
\newcommand{\varprojRlim@}[2]{
  \vtop{\m@th\ialign{##\cr
    \hfil$#1\catname{R}\operator@font lim$\hfil\cr
    \noalign{\nointerlineskip\kern1.5\ex@}#2\cr
    \noalign{\nointerlineskip\kern-\ex@}\cr}}%
}
\newcommand{\varprojRlim}{%
  \mathop{\mathpalette\varprojRlim@{\leftarrowfill@\textstyle}}\nmlimits@
}
\newcommand{\QQ}{\mathbb{Q}}
\newcommand{\ZZ}{\mathbb{Z}}
\newcommand{\NN}{\mathbb{N}}
\newcommand{\kk}{\Bbbk}
\newcommand{\cO}{\mathcal{O}}
\newcommand{\cL}{\mathcal{L}}
\newcommand{\cK}{\mathcal{K}}
\newcommand{\cG}{\mathcal{G}}
\newcommand{\cB}{\mathcal{B}}
\newcommand{\cR}{\mathcal{R}}
\newcommand{\cA}{\mathcal{A}}
\newcommand{\cP}{\mathcal{P}}
\newcommand{\cF}{\mathcal{F}}
\newcommand{\cT}{\mathcal{T}}
\newcommand{\cD}{\mathcal{D}}
\newcommand{\cN}{\mathcal{N}}
\newcommand{\cH}{\mathcal{H}}
\newcommand{\fr}{\mathfrak{r}}
\newcommand{\fn}{\mathfrak{n}}
\newcommand{\fq}{\mathfrak{q}}
\newcommand{\fp}{\mathfrak{p}}
\newcommand{\cJ}{\mathcal{J}}
\newcommand{\RES}{\operatorname{RES}}
\newcommand{\KS}{\operatorname{KS}}
\newcommand{\StS}{\operatorname{SS}}
\def\th@plain{%
  \thm@notefont{}% same as heading font
  \itshape % body font
}
\def\th@definition{%
  \thm@notefont{}% same as heading font
  \normalfont % body font
}
\begin{document}
\author{Alexandre Daoud}
\date{}
\title[]{On the structure of the module of Euler systems \\ for a $p$-adic representation}

\address{King's College London,
Department of Mathematics,
London WC2R 2LS,
U.K.}
\email{alexandre.daoud@kcl.ac.uk}

\vspace*{-1cm}

\maketitle

\begin{abstract}
    We investigate a question of Burns and Sano concerning the structure of the module of Euler systems for a general $p$-adic representation. Assuming the weak Leopoldt conjecture, and the vanishing of $\mu$-invariants of natural Iwasawa modules, we obtain an Iwasawa-theoretic classification criterion for Euler systems which can be used to study this module. This criterion, taken together with Coleman's conjecture on circular distributions, leads us to pose a refinement of the aforementioned question for which we provide strong, and unconditional, evidence. We furthermore answer this question in the affirmative in many interesting cases in the setting of the multiplicative group over number fields. As a consequence of these results, we derive explicit descriptions of the structure of the full collection of Euler systems for the situations in consideration.
\end{abstract}

\let\thefootnote\relax\footnotetext{2020 {\em Mathematics Subject Classification.} Primary: 11F80, 11R23; Secondary: 11R33.\\}

\tableofcontents

\section{Introduction}

The theory of Euler systems, and thus also the closely related theory of Kolyvagin systems, plays an important role in determining the structure of Selmer groups attached to particular $p$-adic representations. The existence of an appropriate non-trivial Euler system in a given context has applications to both the Bloch-Kato Conjecture and Iwasawa Main Conjectures; as was demonstrated, for example, by Kolyvagin in his seminal article \cite{kolyvagin} and Rubin in \cite{rubin2}.

It seems to be the case, however, that the problem of constructing an Euler system for a given representation is extremely difficult. To date only a handful of families of $p$-adic representations have been discovered to possess a non-trivial Euler system related to $L$-values. Some examples have arisen via classical and well-understood algebraic number theory such as the system of cyclotomic units. Others were discovered only after proving deep results in arithmetic geometry. As an example, one has Kato's Euler system for the Tate module of an elliptic curve over $\QQ$; its existence effectively depending on the celebrated modularity theorem.

To further complicate matters, there exists a vast class of representations that do not seem to fit into the framework given by the definition of an Euler system as a system of cohomology classes satisfying corestriction relations. In her work \cite{perrin-riou}, Perrin-Riou attempted to rectify this by asking for Euler systems to be families of elements of appropriate exterior powers of cohomology modules: so-called `higher rank' Euler systems. While the incorporation of a notion of `rank' into the theory of Euler systems has proven malleable, it still seems a daunting task to (unconditionally) construct examples of such a system. 

Motivated by the Rubin-Stark conjecture, Burns and Sano slightly modified in \cite{bs} the definition of a higher rank Euler system to be a family of elements in so-called `exterior biduals' of appropriate cohomology groups. Armed with this definition, they were able to construct a module $\ES^b(T)$ of higher rank Euler systems which is non-zero under mild hypotheses on the given representation $T$. They then went on in \cite[Rem. 1.6]{bs} to explicitly ask how common it is for an arbitrary element of the full module of Euler systems $\ES(T)$ to be an element of $\ES^b(T)$.

In this present article we attempt to provide a conjectural framework, and some results, towards this question. For suitably chosen representations we pose in Question \ref{main-question} the following refinement of the question of Burns and Sano: are \textit{all} systems in $\ES(T)$ contained in $\ES^b(T)$? This question can be seen as an analogue for $p$-adic representations of \cite[Conj. 2.5]{bdss} formulated by Burns, Sano, Seo and the presently named author. The formulation of Question \ref{main-question} is thus motivated by Coleman's conjecture on circular distributions which is equivalent to the assertion that, modulo torsion, all Euler systems defined over abelian extensions of $\QQ$ arise from the cyclotomic Euler system.

An affirmative answer to our question would provide a heuristic for why Euler systems tend to be so difficult to construct. Indeed, Question \ref{main-question} lends credence to the belief that for nice enough representations (such as those coming from geometry), all its associated Euler systems of appropriate rank arise from determinants of particular cohomology complexes $C_{E,S(E)}$ (see Appendix \ref{cohomology-appendix}). While these determinants tend to be algebraically supple they are very mysterious objects in general; for example, they appear as central players in the Burns-Flach formulation of the equivariant Tamagawa Number Conjecture.

In the course of this article we shall use the equivariant theory of higher rank Kolyvagin and Stark systems of Burns, Sakamoto and Sano to provide strong unconditional evidence for Question \ref{main-question} for representations and fields satisfying so-called `standard hypotheses'. We will then go on to employ Iwasawa theory to give an explicit criterion for an Euler system to be an element of $\ES^b(T)$ in terms of classical Iwasawa-theoretic divisibilities. 

We moreover leverage the classical theory of Euler systems in the sense of Kolyvagin to give a proof of Question \ref{main-question} for particular twisted representations of $\ZZ_p(1)$ over $\QQ$. Consequently we are able to obtain an explicit description of the Galois module structure of the full module of Euler systems for these representations. To be more precise, we show that the module of rank one Euler systems for $\QQ$ and the aforementioned twisted representations is free of rank one over the appropriate Iwasawa algebra with a basis given by the Euler system of cyclotomic units.

While we are not at this stage able to give an answer to Question \ref{main-question} for more general representations over number fields (due, in part, to the difficult condition of vanishing $\mu$-invariants), we are able to give partial `systems-level' results. For example, we show that for any elliptic curve over $\QQ$ satisfying particular technical hypotheses, there exists a basic Euler system which agrees with Kato's Euler system on the cyclotomic $\ZZ_p$-extension of $\QQ$.

\subsection{Notation and set-up}

\subsubsection{Arithmetic}

Throughout this article we fix a rational \textit{odd} prime $p$ and a number field $K$. We denote by $\overline{K}$ a fixed algebraic closure and $G_K$ its absolute Galois group. We fix also a finite extension $Q$ of $\QQ_p$ and write $R$ for its ring of integers. We assume to be given a finite-dimensional separable commutative $Q$-algebra $\cB$ and a Gorenstein $R$-order $\cR$ in $\cB$. We denote by $T$ a $p$-adic representation of $K$ with coefficients in $\cR$ (that is to say, a discrete free $\cR$-module admitting a continuous action of $G_K$).

Given any abelian extension $\mathcal{K}$ of $K$ we write $\Omega(\mathcal{K}/K)$ for the collection of finite intermediate fields of $\mathcal{K}/K$. For a fixed $E \in \Omega(\mathcal{K}/K)$ we write $\cG_E$ for its Galois group $\Gal(E/K)$, $S_\infty(E)$ for the set of its archimedean places and $S_\ram(E/K)$ for those primes of $K$ that ramify in $E$. Moreover, given any set of places $S$ of $K$, we write $S(E) := S \cup S_\ram(E/K)$ and $S_E$ for the set of those places of $E$ that lie above those in $S$. Except in cases of ambiguity we will often omit this subscript.

We write $S_p(K)$ for the set of $p$-adic places of $K$ and $S_\ram(T)$ for the set of places of $K$ at which $T$ is ramified. We then denote
\begin{align*}
    S_\mathrm{min}(T) := S_\infty(K) \cup S_p(K) \cup S_\ram(T)
\end{align*}
and fix forever a set $S$ of places of $K$ containing $S_\mathrm{min}(T)$.

For each prime $v \not\in S$ we fix a Frobenius automorphism $\Fr_v$ and define a polynomial
\begin{align*}
    P_v(x) := \det(1-\Fr_v^{-1}\mid T) \in \cR[x]
\end{align*}

For every positive integer $m$ we denote by $\mu_m$ the group of $m^{th}$ roots of unity in $\overline{\QQ}^\times$. We then define a $p$-adic representation $\ZZ_p(1) := \varprojlim_m \mu_m$ of $G_K$, set $\cR(1) := \cR \otimes_{\ZZ_p} \ZZ_p(1)$ and define the Kummer dual $T^*(1) := \Hom_\cR(T, \cR(1))$.

We denote by $T_E$ the induced module $\Ind_{G_K}^{G_E}(T)$. We remark that $T_E$ identifies with $\cR[\cG_E] \otimes_{\cR} T$ upon which $G_K$ acts via the rule
\begin{align*}
    \sigma\cdot(a \otimes t) := a\overline{\sigma}^{-1} \otimes \sigma t
\end{align*}
for $\sigma \in G_K, a \in \cR[\cG_E]$ and $t \in T$ where $\overline{\sigma}$ is the image of $\sigma$ in $\cG_E$.
\subsubsection{Homological Algebra}

For any commutative ring $A$ we denote by $\Der(A)$ the derived category of complexes of $A$-modules and by $\Der^p(A)$ the full triangulated subcategory of $\Der(A)$ consisting of those complexes which are perfect (in other words, those complexes which are isomorphic in $\Der(A)$ to a bounded complex of finitely generated projective $A$-modules).

Given a Galois extension of fields $F/E$ and $N$ an $A[\cG_E]$-module admitting a continuous $\Gal(F/E)$-action, we write $\R\Gamma(F/E, N)$ for the complex of continuous cochains of $N$. If $F$ is the separable closure of $E$ then we abbreviate this to just $\R\Gamma(E,N)$.

For any finite set of places $U$ of $K$ containing $S_\mathrm{min}(N)$, let $E_U$ denote the maximal Galois extension of $E$ unramified outside of the places lying above those in $U$. Then $N$ is naturally a $\Gal(E_U/E)$ module and we write $\R\Gamma(\cO_{E,U}, N)$ for $\R\Gamma(E_U/E,N)$.

We define the compactly-supported cohomology complex $\R\Gamma_c(\cO_{E,U}, N)$ to be the mapping fibre in $\Der(A[\cG_E])$ of the natural localisation morphism
\begin{align*}
    \R\Gamma(\cO_{E,U}, N) \to \bigoplus_{w \in U_E} \R\Gamma(E_w, N)
\end{align*}
induced by inflation with respect to $G_E \to \Gal(E_U/E)$ and then restriction with respect to $G_{E_w} \subseteq G_E$.

Now fix a place $w \not\in U_E$. We define the finitely-supported cohomology complex
\begin{align*}
    \R\Gamma_f(E_w,N) := \R\Gamma(E_w^\ur/E_w, N)
\end{align*}
where $E_w^\ur$ is the maximal unramified extension of $E_w$. We remark that this complex is isomorphic in $\Der^p(A[\cG_E])$ to the complex
\begin{align}\label{finite-support-resolution}
    [N \xrightarrow{1-\Fr_w^{-1}} N]
\end{align}
concentrated in degrees zero and one.

For each $* \in \set{\varnothing, c, f}$ and $i \in \ZZ$, we denote by $H^i_*(-,N)$ the cohomology modules $H^i(\R\Gamma_*(-,N))$.

\subsubsection{Algebra}

Let $X$ be an $A$-module. We write $X^* := \Hom_A(X,A)$. We write $X_\tor$ for its torsion submodule as an abelian group and $X_\mathrm{tf}$ for the quotient $X/X_\tor$. If, moreover, $A$ is an integral domain with fraction field $F$ then we write $A^\lor$ for the Pontryagin dual $\Hom_A(X,F/A)$.

We write $\Det_A$ for the determinant functor of Knudsen and Mumford from the category of finitely generated projective $A$-modules to that of graded invertible $A$-modules. We recall that this functor induces a functor on $\Der^p(A)$ which, by abuse of notation, we also denote $\Det_A$.

When $G$ is an abelian group, we denote by $(-)^\#$ the involution functor on the category of $A[G]$-modules which sends an $A[G]$-module $X$ to the module with the same underlying abelian group structure but with the $G$-action twisted by the involution $g \mapsto g^{-1}$ of $A[G]$.

Given an integer $r \geq 0$ we define the $r^{th}$ exterior bidual of $X$ to be the module
\begin{align*}
    \bidual_A^r X := \left(\exprod_A^r X^*\right)^*
\end{align*}
We remark that there is a natural homomorphism
\begin{align*}
    \xi_r: \exprod_A^r X &\to \bidual_A^r X\\
    x &\mapsto (\phi \mapsto \phi(x))
\end{align*}
which is, in general, neither injective nor surjective. We also recall from \cite[Prop. A.8]{bs} that when $A = \cR[\cG_E]$ for some $E \in \Omega(\cK/K)$, then $\xi_r$ induces an isomorphism
\begin{align*}
    \Set{x \in Q \otimes_R \exprod_{\cR[\cG_E]}^r X | \Phi(x) \in \cR[\cG_E] \text{ for all } \Phi \in \exprod_{\cR[\cG_E]}^r X^*} \xrightarrow{\sim} \bidual_{\cR[\cG_E]}^r X
\end{align*}

If $G$ is finite, $X$ a $\ZZ[G]$-module, and $\psi \in \widehat{G}$ a character, we define the $(p,\psi)$-part $X^\psi$ of $X$ to be the $\ZZ_p[\im(\psi)]$-module $X \otimes_{\ZZ[G]} \ZZ_p[\im(\psi)]$ where we regard $\ZZ_p[\im(\psi)]$ as a $\ZZ_p[G]$-algebra via $\psi$. Given $x \in X$ we write $x^\psi$ for the element $x \otimes 1$ of $X^\psi$.

\subsection{Statement of the question and main results}

\subsubsection{Modules of Euler systems}

In this subsection we briefly review the various modules of Euler systems with which we shall be concerned. To do this, we fix an abelian extension $\mathcal{K}/K$.

Given a pair of fields $E \subseteq E'$ in $\Omega(\cK/K)$ we note that the corestriction map induces, for each $r \geq 0$, a morphism
\begin{align*}
    \cores_{E'/E}: Q \otimes_R \exprod_{\cR[\cG_{E'}]}^r H^1(\cO_{E',S(E')}, T) \to Q \otimes_R \exprod_{\cR[\cG_E]}^r H^1(\cO_{E,S(E')}, T)
\end{align*}
With this in mind we then make the following definition:

\begin{definition}
    A rational Euler system of rank $r \geq 0$ for the pair $(T,\cK)$ is a collection
    \begin{align*}
        \eta := (\eta_E)_E \in \prod_{E \in \Omega(\cK/K)} Q \otimes_R \exprod_{\cR[\cG_E]}^r H^1(\cO_{E,S(E)}, T)
    \end{align*}
    such that for every pair of fields $E \subseteq E'$ in $\Omega(\cK/K)$ one has
    \begin{align*}
        \cores_{E'/E}(\eta_{E'}) = \left(\prod_{v \in S(E')\backslash S(E)} P_v(\Fr_v^{-1})\right)\eta_E
    \end{align*}
    inside $Q \otimes_R \exprod_{\cR[\cG_E]}^r H^1(\cO_{E,S(E')}, T)$. We denote by $\RES_r(T,\cK)$ the collection of such rational Euler systems of rank $r$ for $(T,\cK)$ and note that it can naturally be endowed with the structure of an $\cR[[\Gal(\cK/K)]]$-module.
\end{definition}

\begin{definition}\label{euler-system-definition}
    An Euler system of rank $r \geq 0$ for the pair $(T,K)$ is an element $\eta \in \RES_r(T,\cK)$ with the property that for each $E \in \Omega(\cK/K)$ one has that
    \begin{align*}
        \eta_E \in \bidual_{\cR[\cG_E]}^r H^1(\cO_{E,S(E)}, T)
    \end{align*}
    The set of all such Euler systems of rank $r$ constitutes an $\cR[[\Gal(\cK/K)]]$-submodule of $\RES_r(T,\cK)$ which we denote by $\ES_r(T,\cK)$.
\end{definition}

\subsubsection{Admissible pairs}

We consider the following natural hypotheses on the pair $(T,\cK)$:

\begin{hypothesis}\label{T-hypothesis}\text{}
    \begin{enumerate}
        \item[$\mathrm{(H_0)}$]{The $\cR$-module $Y_K(T) := \bigoplus_{v \in S_\infty(K)} H^0(K_v,T^*(1))$ is free of rank $ r \geq 1$.}
        \item[$(\mathrm{H_1})$]{$T$ is ramified at only finitely many primes of $K$.}
        \item[$(\mathrm{H_2})$]{For all $E \in \Omega(\cK/K)$ the invariants module $H^0(E, T)$ vanishes.}
        \item[$(\mathrm{H_3})$]{For all $E \in \Omega(\cK/K)$ the cohomology group $H^1(\cO_{E,S(E)}, T)$ is $R$-torsion-free.}
    \end{enumerate}
\end{hypothesis}

\begin{hypothesis}\label{k-hypothesis}\text{}$\cK$ is an abelian pro-$p$ extension such that
    \begin{enumerate}
        \item{$\cK$ contains the maximal $p$-extension inside the ray class field modulo $\mathfrak{q}$ for almost all primes $\mathfrak{q}$ of $K$;}
        \item{$\cK = \cK'K_\infty$ where $\cK'$ is unramified at all places in $S$ and $K_\infty$ is a $\ZZ_p$-extension of $K$, with Galois group $\Gamma$, which is disjoint from the Hilbert class field of $K$ and in which no non-archimedean place splits completely.}
    \end{enumerate}
\end{hypothesis}

\begin{remark}
    Assume $\cK$ is chosen to satisfy Hypothesis \ref{k-hypothesis} and fix a field $E \in \Omega(\cK'/K)$. Then it is easy to check that there is a canonical decomposition of Galois groups
    \begin{align*}
        \Gal(EK_\infty/K) \cong \Gal(E/K) \times \Gal(K_\infty/K)
    \end{align*}
    To give an example of when this hypothesis is satisfied, one can take $p$ to be a prime not dividing the class number of $K$, $K_\infty$ a $\ZZ_p$-extension of $K$ and $\cK'$ the maximal abelian extension of $K$ which is unramified at all primes in $S$.
\end{remark}

Throughout this article we assume that all pairs $(T,\cK)$, unless otherwise explicitly stated, satisfy these hypotheses. In this case we shall refer to such pairs as `admissible pairs'.

\begin{example}
    If we set $T = \ZZ_p(1)$, then Kummer Theory provides us with an identification
    \begin{align*}
        H^1(\cO_{E,S(E)}, T) \cong \cO_{E,S(E)}^\times \otimes_\ZZ \ZZ_p
    \end{align*}
    for each $E \in \Omega(\cK/K)$. In particular, if $K$ (and therefore also $E$) is totally real then each $H^1(\cO_{E,S(E)}, T)$ is $\ZZ_p$-torsion-free (since $p$ is assumed odd). In this situation one then has that $(T,\cK)$ is an admissible pair for appropriately chosen $\cK$.
\end{example}

\begin{remark}
    In order to maximise potential applications, the present article has been written with a high level of generality in mind. In particular, the coefficient ring $\cR$ is rather general and Euler systems are, for the most part, of arbitrary rank. To facilitate an initial read-through, the reader may wish to restrict their attention to the more classical rank-one situation. For example, one could take $K = \QQ, \cR = \ZZ_p, \cK$ to be the maximal totally real sub-extension of $\QQ^\mathrm{ab}$ and $T = \ZZ_p(1)$. In this case $r = 1$ and one has, for each $E \in \Omega(\cK/K)$, a canonical isomorphism
    \begin{align*}
        \bidual_{\cR[\cG_E]}^r H^1(\cO_{E,S(E)},T) \cong H^1(\cO_{E,S(E)},T)
    \end{align*}
    so that our definition of Euler system for the pair $(T,\cK)$ coincides with that of Rubin in \cite[Def. II.1.1]{rubin} (and see also \cite[Cor. B.3.5]{rubin}). In this setting the reader is therefore free to dispose of all exterior biduals.
\end{remark}

\subsubsection{Statement of the question and main results}

In this section we review the main results of this article. For ease of reading, we may defer the full statement of a given result to the main body of the article and prefer to give a less precise version here. For the convenience of the reader, each result will be listed with the numbering of its corresponding precise version in parentheses.

At the outset we recall that for every admissible pair $(T,\cK)$, Burns and Sano have constructed in \cite[\S 2.5]{bs} a module of so-called `basic' Euler systems $\ES^b(T,\cK)$ (and we warn the reader that their $T$ corresponds to our $T^*(1)$). We remark that their construction is in terms of an auxiliary set of places $\Sigma$ disjoint from $S$ but the assumed validity of ($\mathrm{H_3}$) implies that we can take $\Sigma = \varnothing$.

Following Coleman's work on circular distributions (see, for example, \cite{BuSe}) and the article \cite{bdss} of Burns, Sano and Seo and the presently named author, we are led to formulate the following question refining the one posed in \cite[Rem. 1.6]{bs}.

\begin{question}\label{main-question}
    Is every element of $\ES_r(T,\cK)$ contained in $\ES^b(T,\cK)$?
\end{question}

As general evidence towards an affirmative answer to this question we use the techniques of Burns and Sano in \cite{bs} and Burns, Sano and Sakamoto in \cite{bss2} to prove the following result which can be seen as an analogue of \cite[Thm. 2.7]{bdss}.

\begin{proposition}[\ref{evidence-theorem}]\label{evidence-theorem-pre}
    Let $E \in \Omega(\cK/K)$ be a field that satisfies `standard hypotheses' for the representation $T$. Then for every Euler system $\eta \in \ES_r(T,\cK)$ there exists a basic Euler system in $\ES^b(T,\cK)$ that agrees with $\eta$ on $E$.
\end{proposition}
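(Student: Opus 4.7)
The plan is to leverage the equivariant theory of higher rank Stark systems developed by Burns, Sakamoto and Sano, combined with the determinantal characterisation of basic Euler systems due to Burns and Sano. The strategy parallels that of \cite[Thm. 2.7]{bdss}, transposed from the setting of circular distributions to general $p$-adic representations.

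First I would invoke the equivariant higher rank Kolyvagin-derivative construction to obtain an $\cR[\cG_E]$-equivariant regulator
\[
\rho_E : \ES_r(T,\cK) \longrightarrow \StS_r(T,E),
\]
where $\StS_r(T,E)$ denotes the module of rank-$r$ Stark systems for $T$ over $E$. The role of the \emph{standard hypotheses} imposed on $E$ is precisely to place one inside the scope of the structural theorem of Burns-Sakamoto-Sano, which identifies $\StS_r(T,E)$ as a free $\cR[\cG_E]$-module of rank one.

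Second, I would show that the restriction of $\rho_E$ to $\ES^b(T,\cK)$ meets a generator of $\StS_r(T,E)$. This relies on the construction of $\ES^b(T,\cK)$ from elements of an inverse limit of the Knudsen-Mumford modules $\Det^{-1}_{\cR[\cG_E]}(C_{E,S(E)})$ attached to the compactly-supported cohomology complexes, together with the compatibility of this determinantal description with the cyclic Stark-system generator provided by the standard hypotheses. It would then follow that for every $\eta \in \ES_r(T,\cK)$ there exist $a \in \cR[\cG_E]$ and $c \in \ES^b(T,\cK)$ with $\rho_E(\eta) = a \cdot \rho_E(c)$.

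Third, I would descend this relation from Stark systems back to Euler system components at $E$. The first (canonical) component of a Stark system for $T$ over $E$ lies in $\bidual_{\cR[\cG_E]}^r H^1(\cO_{E,S(E)},T^*(1))$, and for Stark systems in the image of $\rho_E$ it recovers the $E$-component of the originating Euler system, using crucially that $(\mathrm{H_3})$ ensures $H^1(\cO_{E,S(E)},T^*(1))$ is $R$-torsion-free. One thus deduces $\eta_E = a \cdot c_E$, placing $\eta_E$ in the required $\cR[\cG_E]$-module.

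The principal obstacle is the second step: producing, in the preimage of a Stark-system generator, an actual basic Euler system. This amalgamates the determinantal construction of $\ES^b(T,\cK)$ with a Chebotarev-style argument over an appropriate infinite family of Kolyvagin primes to produce enough independent cohomology classes. It is precisely this step which most essentially exploits the standard hypotheses on $E$, and it is also the place where the techniques of \cite{bs} and \cite{bss2} must be adapted carefully to the equivariant setting.
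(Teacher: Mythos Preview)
Your strategy is correct and coincides with the paper's argument in all essential points: map Euler systems into a rank-one module of derived systems, show that basic Euler systems hit a generator, and evaluate at the trivial ideal to recover $\eta_E$. The only differences are cosmetic. First, the paper works with Kolyvagin systems rather than Stark systems, using the Kolyvagin derivative $\cD_\cT : \ES_r(T,\cK) \to \KS_r(\cT,\cF)$ directly and invoking the regulator isomorphism $\cR_\cT : \StS_r(\cT,\cF) \xrightarrow{\sim} \KS_r(\cT,\cF)$ only to transport surjectivity; your map $\rho_E$ is simply $\cR_\cT^{-1} \circ \cD_\cT$. Second, your ``principal obstacle'' is overstated: no bespoke Chebotarev construction is needed at this stage. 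The paper packages the whole thing into a single commutative square with $\VS(T,\cK)$ in the upper-left corner, the surjection $\VS(T,\cK) \twoheadrightarrow \StS_r(\cT,\cF)$ (factoring through horizontal determinantal systems $\HS(\cT)$, already established in \cite{bs}) down the left side, and $\cD_\cT$ down the right. Commutativity immediately gives that $\cD_\cT \circ \Theta_{T,\cK}$ is surjective, so the image of any $\cR[[\Gal(\cK/K)]]$-basis $z$ of $\VS(T,\cK)$ generates $\KS_r(\cT,\cF)$; writing $\cD_\cT(\eta) = x \cdot \cD_\cT(\Theta_{T,\cK}(z))$ and evaluating at the ideal $1$ yields $\eta_E = x \cdot \Theta_{T,\cK}(z)_E$ in one line.
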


Now, the proof of the equivariant Tamagawa Number Conjecture for abelian extensions of $\QQ$ of Burns and Greither in \cite{bg} suggests to us (\textit{c.f.} Remark \ref{eimc-remark}) that in order to obtain results towards our question about Euler systems themselves (and not just at a fixed level $E \in \Omega(\cK/K)$ as in Proposition \ref{evidence-theorem-pre}), one must employ Iwasawa Theory. To this end we shall give an Iwasawa-theoretic reinterpretation, modulo the validity of the weak Leopoldt conjecture, of the construction of $\ES^{b}(T,\cK)$.

This reinterpretation then allows us to establish the following criterion for an Euler system to be basic in which $H^2_\Iw(\mathcal{O}_{E,S(E)}, T)$ is an Iwasawa cohomology group.

\begin{theorem}[\ref{explicit-conditions-for-conj-theorem}]\label{basic-criterion-theorem}
    Let $\eta \in \ES^r(T,\cK)$ be an Euler system and assume that the $\mu$-invariants of a family of natural Iwasawa modules vanish. Then $\eta \in \ES^b(T,\cK)$ if and only if for every $E \in \Omega(\cK'/K)$ and certain characters $\psi \in \widehat{\cG_E}$ for which $e_\psi \cdot (\eta_{E_n})_n$ is non-trivial one has
        \begin{align*}
            \begin{array}{c}
                \displaystyle\mathrm{char}_{\bLambda_{E,\psi}}\left(H^2_\Iw(\cO_{E,S(E)}, T)^\psi\right) 
                \;\;\bigg|\;\;
                \mathrm{char}_{\bLambda_{E,\psi}}\left(\frac{\displaystyle\left(\varprojlim_{n \in \NN} \bidual_{\cR[\cG_{E_n}]}^r H^1(\cO_{E_n, S(E_n)}, T)\right)^\psi}{\displaystyle\braket{(\eta_{E_n})_n}^\psi_{\bLambda_{E,\psi}}}\right)
            \end{array}
        \end{align*}
        where $\bLambda_{E,\psi} := \cR(\im(\psi))[[\Gal(E_\infty/E)]]$. Moreover, $\eta$ is an $\cR[[\Gal(\cK/K)]]$-basis of $\ES^b(T,\cK)$ if and only if equality of these characteristic ideals holds.
\end{theorem}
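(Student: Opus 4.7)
The plan is to reduce the statement to an assertion about Iwasawa-theoretic modules over the two-dimensional algebras $\bLambda_{E,\chi}$, using the Iwasawa-theoretic reinterpretation of $\ES^b(T,\cK)$ promised in the paragraph immediately preceding the statement. Under the weak Leopoldt conjecture, such a reinterpretation identifies a basic Euler system with the image, under the natural descent map to each exterior bidual $\bidual^r_{\cR[\cG_E]} H^1(\cO_{E,S(E)}, T^*(1))$, of a canonical generator of a determinantal module built from the complex $\R\Gamma_c(\cO_{E_n,S(E_n)}, T)$ over the Iwasawa algebra $\cR[[\Gal(EK_\infty/K)]]$. The first step is therefore to translate the membership condition $\eta \in \ES^b(T,\cK)$ into the equivalent statement that each Iwasawa-theoretic lift $(\eta_{E_n})_n$ lies in this determinantal module, for every $E \in \Omega(\cK'/K)$.

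Using the canonical decomposition $\Gal(EK_\infty/K) \cong \cG_E \times \Gal(K_\infty/K)$ provided by Hypothesis \ref{k-hypothesis}, the Iwasawa algebra decomposes (after possibly extending scalars) as a product $\prod_\chi \bLambda_{E,\chi}$, localising the problem at each character $\chi \in \widehat{\cG_E}$. For those characters with $e_\chi \cdot (\eta_{E_n})_n$ non-trivial, the standard exact triangle relating $\R\Gamma_c$ to $\R\Gamma$, together with $(\mathrm{H_2})$ and $(\mathrm{H_3})$ which kill $H^0$ and the torsion part of $H^1$, identifies the determinantal module at level $\chi$ with the $\bLambda_{E,\chi}$-submodule
\begin{align*}
    \mathrm{char}_{\bLambda_{E,\chi}}\!\left(H^2_\Iw(\cO_{E,S(E)}, T^*(1))^\chi\right)^{-1} \cdot \left(\varprojlim_{n \in \NN} \bidual^r_{\cR[\cG_{E_n}]} H^1(\cO_{E_n,S(E_n)}, T^*(1))\right)^\chi
\end{align*}
of the ambient rational module. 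Containment of the $\chi$-component of $(\eta_{E_n})_n$ in this submodule is then seen to be equivalent to the stated divisibility of characteristic ideals, and equality of ideals corresponds precisely to the $\chi$-component being a free generator; this gives the final claim about bases. The vanishing of $\mu$-invariants is invoked here so that the Iwasawa structure theorem yields well-behaved principal characteristic ideals free of pseudo-null noise.

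The principal obstacle will be establishing this Iwasawa-theoretic reinterpretation in the precise form needed: one must carefully track Knudsen--Mumford determinants through the descent maps from the Iwasawa algebra to each finite level $\cR[\cG_E]$, invoke the Gorenstein hypothesis on $\cR$ in order to consistently identify the basic module with appropriate exterior biduals, and ensure that pseudo-null contributions at each finite stage do not corrupt the comparison of principal ideals. A careful application of the weak Leopoldt conjecture is required to control the $\bLambda_{E,\chi}$-rank of $H^2_\Iw$ and to verify that the descent isomorphism is clean in the presence of the assumed $\mu$-vanishing; this is the most delicate technical point and is what enables the divisibility criterion to be stated purely in terms of characteristic ideals rather than Fitting ideals.
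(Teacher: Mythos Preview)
Your overall strategy---reinterpreting basicness as the containment $\cL_{\eta,E}\in\Det_{\bLambda_E}(C_{E_\infty,S(E)}(T))$ for each $E$ and then localising---is exactly what the paper does. The gap is in how you localise. You propose to decompose the equivariant Iwasawa algebra as $\prod_\chi \bLambda_{E,\chi}$ ``after possibly extending scalars'', but that product decomposition only holds after inverting $p$: one has $\bLambda_E[1/p]=\bigoplus_\chi \bLambda_{E,\chi}[1/p]$, not an integral splitting. The containment you need to check is an integral statement, and passing to characters directly discards precisely the information at the \emph{singular} height-one primes of $\bLambda_E$ (those containing $p$). The paper therefore does not decompose by characters immediately; it first invokes a Cohen--Macaulay reduction (Lemma~\ref{height-one-reduction-lemma}) to check the inclusion at every height-one prime, and then treats regular and singular primes separately. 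Regular primes are in bijection with the $\chi$, and there your sketch is essentially correct: $\bLambda_{E,\fp}$ is a DVR, one passes to cohomology in the determinant, and (using Lemma~\ref{ES-torsion-lemma} to know the quotient is torsion) obtains the Fitting-ideal inclusion, which is then compared with the characteristic-ideal divisibility via the formula $\Fitt^0(M)=\Fitt^0(M_{\mathrm{fin}})\cdot\mathrm{char}(M)$.

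This is also where your description of the $\mu$-vanishing hypothesis goes wrong. It is not invoked to make characteristic ideals ``free of pseudo-null noise''; its role is to dispose of the singular primes entirely. For a singular prime $\fp$, the hypothesis that the $\mu$-invariant of $e_{\chi_\fp}\cdot H^2_\Iw(\cO_{E,S(E)},T^*(1))$ vanishes is equivalent (Lemma~\ref{singular-prime-lemma}) to $H^2_\Iw(\cO_{E,S(E)},T^*(1))_\fp=0$, which via the exact sequence~(\ref{iwasawa-cohomology-exact-sequence}) forces $H^1_\Iw$ to be free of rank $r$ after localising at $\fp$. The determinant condition at $\fp$ then collapses to the tautology that $(\eta_{E_n})_n$ already lies in the localised bidual. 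Without this step the argument has no control at singular primes and the claimed equivalence fails. The second $\mu$-vanishing assumption in the theorem (on the quotient by $\langle(\eta_{E_n})_n\rangle$) plays the analogous role for the basis assertion: it upgrades the tautological inclusion at singular primes to an equality.
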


In the setting of the multiplicative group over number fields, we then use this criterion to establish the following result concerning Question \ref{main-question}.

\begin{theorem}[\ref{question-valid-theorem}]\label{pre-question-valid-theorem}
    Let $\psi: G_\QQ \to \overline{\QQ_p}^\times$ be an abelian character of finite prime-to-$p$ order satisfying technical hypotheses. Let $T$ be the representation of $G_\QQ$ which is equal to $\cR := \ZZ_p(\im(\psi))$ as an $\cR$-module and upon which $G_\QQ$ acts via $\chi_{\mathrm{cyc}}\psi^{-1}$ where $\chi_\mathrm{cyc}$ is the cyclotomic character of $\QQ$. Then for an appropriately chosen $\cK$, Question \ref{main-question} has an affirmative answer for the pair $(T,\cK)$.
\end{theorem}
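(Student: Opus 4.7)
The strategy is to apply the criterion of Theorem~\ref{basic-criterion-theorem} in order to reduce Question~\ref{main-question} for the pair $(T,\cK)$ to an Iwasawa-theoretic divisibility of characteristic ideals, and then to verify this divisibility by combining the classical Euler-system method of Kolyvagin and Rubin with the Iwasawa Main Conjecture of Mazur--Wiles for abelian extensions of $\QQ$.

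First, I identify the relevant objects concretely. Unwinding the definition of the Kummer dual gives $T^{*}(1) \cong \cR(\psi)$, so that for each $E \in \Omega(\cK/\QQ)$ the module $H^{1}(\cO_{E,S(E)}, T^{*}(1))$ identifies with the $\psi$-isotypic component of the $p$-completed $S$-unit group of~$E$. The implicit hypothesis that $\psi$ is odd---which is precisely what makes $(\mathrm{H_{0}})$ hold with $r = 1$, as one sees by computing $H^{0}(\QQ_{\infty}, T)$ using that complex conjugation acts on $T$ via $\chi_{\mathrm{cyc}}(c)\psi^{-1}(c) = -\psi(c)^{-1}$---ensures that the construction of basic Euler systems in \cite[\S2.5]{bs}, when applied along the cyclotomic tower and the relevant $\psi$-components, produces a canonical element of $\ES^{b}(T,\cK)$ corresponding to the $\psi$-twist of the classical cyclotomic units Euler system. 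This provides the candidate basis against which an arbitrary $\eta \in \ES_{r}(T,\cK)$ will be compared.

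Next, I verify the prerequisites of Theorem~\ref{basic-criterion-theorem}. The weak Leopoldt conjecture is classically known for abelian extensions of $\QQ$ by work of Iwasawa, while the vanishing of the $\mu$-invariants of the relevant Iwasawa modules follows, $\chi$-component by $\chi$-component, from the Ferrero--Washington theorem. To establish the divisibility clause of the criterion for an arbitrary Euler system $\eta \in \ES_{r}(T,\cK)$, I would apply Rubin's theorem bounding Selmer groups in terms of Euler systems (see \cite{rubin2}) to the representation $\cR(\psi)$ along the tower $EK_{\infty}/E$; after translating between classical Selmer groups and $H^{2}_{\Iw}(\cO_{E,S(E)}, T^{*}(1))$ via Poitou--Tate duality, this yields exactly the divisibility required by Theorem~\ref{basic-criterion-theorem}. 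To then upgrade this to the basis clause, I would apply the Iwasawa Main Conjecture of Mazur--Wiles to the candidate cyclotomic Euler system, which promotes the Kolyvagin--Rubin divisibility to an equality of characteristic ideals. Taken together, these two steps yield simultaneously that $\ES_{r}(T,\cK) \subseteq \ES^{b}(T,\cK)$ (and hence equality, since the reverse inclusion is tautological) and that $\ES^{b}(T,\cK)$ is free of rank one on the twisted cyclotomic Euler system.

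The main obstacle will be to reconcile Rubin's classical formulation, phrased in terms of Selmer groups over finite levels, with the Iwasawa-theoretic statement involving exterior biduals and $H^{2}_{\Iw}$ appearing in Theorem~\ref{basic-criterion-theorem}, and to check that the technical (Chebotarev-style) hypotheses of Rubin's theorem are automatic under the mild hypotheses imposed on~$\psi$. The assumption that $p$ does not divide the order of $\psi$ is what ensures that the $\chi$-isotypic projections introduce no spurious finite cokernels and that the semisimple structure of $\cR = \ZZ_{p}(\im(\psi))$ allows a clean decomposition of the ambient Iwasawa algebra $\bLambda_{E,\chi}$ into one-dimensional factors indexed by the characters $\chi$ featured in the statement of the criterion.
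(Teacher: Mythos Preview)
Your overall strategy---reduce via Theorem~\ref{basic-criterion-theorem} using Ferrero--Washington for the $\mu$-vanishing, then establish the characteristic-ideal divisibility by classical Euler-system methods---is correct and is exactly the paper's approach. Two points, however, need correction.

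First, your parity analysis is inverted: the paper's ``mild technical hypotheses'' include that the fixed field $L$ of $\psi$ is \emph{totally real}, i.e.\ $\psi$ is \emph{even} (see \S4.1.1 and Example~\ref{standard-hypotheses-example}(1)), not odd. Second, you are conflating the theorem with its corollary: Theorem~\ref{question-valid-theorem} only asserts the inclusion $\ES_1(T,\cK)\subseteq\ES^b(T,\cK)$, so Mazur--Wiles plays no role here; the basis statement is Corollary~\ref{question-valid-corollary}, proved separately via the eTNC of Burns--Greither.

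On implementation the paper also takes a more concrete route than your proposed appeal to Rubin's general Selmer bound plus Poitou--Tate. After a bookkeeping step that passes from $E$ to the fixed field $E_\chi$ of the relevant character---cancelling the Euler factors $\prod_{v\in S(E)\setminus S(E_\chi)}(1-\Fr_v^{-1})$ against the contribution of $Y^{\overline{\infty}}_{E,S(E)\setminus S_p(K)}$ to $H^2_\Iw$---the paper observes that any $\eta\in\ES_1(T,\cK)$ yields a classical $p$-adic Euler system in the sense of \cite{rubin-appendix} and \cite{greither}, satisfying (ES1)--(ES3) together with a weakened congruence (ES4)$_p$, and checks that Greither's proof of \cite[Thm.~3.1]{greither} only uses (ES4) through a map that factors through $p$-primary parts, so (ES4)$_p$ suffices. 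This gives the required divisibility $\mathrm{char}_{\bLambda_{E,\chi}}(A_E^{\infty,\chi})\mid\mathrm{char}_{\bLambda_{E,\chi}}(U_E^{\infty,\chi}/\langle(\eta_{E_\chi,n})_n\rangle^\chi)$ directly. (Note also that your citation \cite{rubin2} is Rubin's paper on imaginary quadratic fields; you presumably mean \cite{rubin} or \cite{rubin-appendix}.)
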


While this Theorem is stated for characters of $G_\QQ$ we are able to obtain in Theorem \ref{divisibilities-valid-higher-rank-theorem} a similar but somewhat weaker result for general totally real fields $K$.

Combining Theorem \ref{pre-question-valid-theorem} with the validity of the Iwasawa main conjecture for abelian fields, as proved by Greither in \cite[Thm. 3.1]{greither}, we then obtain the following Corollary.

\begin{corollary}[\ref{question-valid-corollary}]\label{pre-question-valid-corollary}
    With hypotheses as in Theorem \ref{question-valid-theorem}, $\ES_1(T, \cK)$ is a free\\ $\cR[[\Gal(\cK/K)]]$-module of rank one with a basis given by the Euler system of cyclotomic units.
\end{corollary}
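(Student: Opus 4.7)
My plan is to deduce the corollary in two stages: first show that $\ES_1(T,\cK) = \ES^b(T,\cK)$, and then determine the structure of this common module using the equivariant Tamagawa Number Conjecture (eTNC).

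The first stage is essentially immediate. The containment $\ES^b(T,\cK) \subseteq \ES_1(T,\cK)$ holds by the construction of basic Euler systems in \cite[\S 2.5]{bs}, while the reverse containment is precisely the content of Theorem \ref{pre-question-valid-theorem}. Hence the corollary reduces to analysing the $\cR[[\Gal(\cK/K)]]$-module structure of $\ES^b(T,\cK)$.

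For the second stage, I would appeal to the Iwasawa-theoretic reinterpretation of $\ES^b(T,\cK)$ which underlies Theorem \ref{basic-criterion-theorem}. Under the hypotheses in force---in particular, the weak Leopoldt conjecture, which holds in the present abelian-over-$\QQ$ setting---this reinterpretation presents $\ES^b(T,\cK)$ as a free $\cR[[\Gal(\cK/K)]]$-module of rank one, a natural generator of which is pinned down via a Knudsen--Mumford determinant of the relevant Iwasawa-theoretic Selmer complex attached to $T^*(1)$. Concretely, this freeness may be read off from the divisibility criterion of Theorem \ref{basic-criterion-theorem}: the validity of Question \ref{main-question} upgrades the generic divisibility there into an equality of characteristic ideals at every admissible character $\chi$, which assembles to the claimed freeness.

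The main obstacle is then to exhibit the $\psi$-twist of the Euler system of cyclotomic units as a generator of the aforementioned determinant module. This amounts to comparing the norm-coherent sequence of (twisted) cyclotomic units with an $L$-value element coming from the determinant, and it is exactly this comparison that is furnished by the eTNC for Tate motives over abelian extensions of $\QQ$, proved for odd primes by Burns and Greither \cite{bg}: their result trivialises the determinant module via the leading term of the twisted Dirichlet $L$-function, which, by the analytic class number formula, coincides up to a unit with the cyclotomic units system after twisting by $\psi$. Transporting this trivialisation through the Iwasawa-theoretic identification of the previous paragraph yields the desired basis statement, and the prime-to-$p$ order hypothesis on $\psi$ ensures that no auxiliary trivial-zero or $\mu$-invariant phenomena obstruct the argument.
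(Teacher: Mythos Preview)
Your Stage 1 is fine. Stage 2, however, contains a genuine logical error and then a correct idea that is not properly connected to it.

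The error is the sentence ``the validity of Question \ref{main-question} upgrades the generic divisibility there into an equality of characteristic ideals at every admissible character $\chi$, which assembles to the claimed freeness.'' This is not true. Question \ref{main-question} asserts only that every Euler system is basic, i.e.\ that every $\eta$ satisfies the divisibility in Theorem \ref{basic-criterion-theorem}; it says nothing about equality for any particular $\eta$. Equality in that theorem is equivalent to $\eta$ being a \emph{basis} of $\ES^b(T,\cK)$, and obviously not every basic Euler system is a basis. Moreover, the freeness of $\ES^b(T,\cK)$ does not need any such argument: it is immediate from the construction, since $\VS(T,\cK)$ is free of rank one over $\cR[[\Gal(\cK/K)]]$ and $\Theta_{T,\cK}$ is injective under $(\mathrm{H}_3)$ (cf.\ the diagram in Lemma \ref{vertical-systems-reinterpretation-proposition}). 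The paper simply states this at the outset of the proof.

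Your final paragraph invoking the eTNC of Burns--Greither is the correct input, but you have not said how to use it. The paper's argument is direct: for each $E \in \Omega(\cK/K)$ the eTNC supplies an explicit $\ZZ[\cG_{LE}]$-basis $\mathfrak{z}_E$ of $\Det_{\ZZ[\cG_{LE}]}(C_{LE,\cW})$, and one checks (via \cite{bdss}) that it projects to $N_{\QQ(\mu_{\mathfrak{f}(E)})/LE}(1-\zeta_{\mathfrak{f}(E)})^{1/2}$; one then passes to the $\psi$-component to obtain a basis $\mathfrak{z}_E^\psi$ of $\Det_{\cR[\cG_E]}(C_{E,S(E)}(T))$ projecting to $\eta_E^{\mathrm{cyc}}$, and verifies that $(\mathfrak{z}_E^\psi)_E$ is compatible with the transition maps $\nu_{E'/E}$, hence lies in $\VS(T,\cK)$. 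An alternative route closer to what you seem to intend would be to invoke the equivariant Iwasawa Main Conjecture (a consequence of eTNC) to obtain equality of characteristic ideals for $\eta^{\mathrm{cyc}}$, and then apply the ``Moreover'' clause of Theorem \ref{explicit-conditions-for-conj-theorem}; but this requires also checking the second $\mu$-vanishing hypothesis there, and in any case you must invoke the eIMC explicitly rather than derive it from Question \ref{main-question}.
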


Finally, we shall state a precise version of, and prove, the following result which demonstrates that something stronger than the result of Theorem \ref{evidence-theorem-pre} can be said about the question of to what extent arbitrary Euler systems over general representations are basic.

\begin{theorem}[\ref{kato-theorem}]\label{kato-theorem-pre}
    Let $p > 3$ and $C/\QQ$ be an elliptic curve satisfying technical hypotheses. Denote by $T_p(C)$ the $p$-adic Tate module of $C$. Then for any Euler system $\eta \in \ES_1(T_p(C),\cK)$ there exists a basic Euler system that agrees with $\eta$ on the cyclotomic $\ZZ_p$-extension $\QQ_\infty$ of $\QQ$. In particular, this applies to Kato's Euler system $z^{\mathrm{Kato}}$ (which is reviewed in, for example, \cite[\S 6.3]{bss2}).
\end{theorem}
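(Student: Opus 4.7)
The plan is to make essential use of Theorem \ref{basic-criterion-theorem} together with the Iwasawa-theoretic reinterpretation of the Burns--Sano construction of $\ES^b(T_p(C),\cK)$ that underlies it. While Theorem \ref{basic-criterion-theorem} is phrased as a criterion for an Euler system to be basic in its entirety (and so requires divisibilities of characteristic ideals for every pair $(E,\chi)$ in the distinguished family), the reinterpretation should also identify the image of the natural restriction map
\begin{align*}
\ES^b(T_p(C),\cK) \;\longrightarrow\; \varprojlim_{n} H^1\bigl(\cO_{\QQ_n,S(\QQ_n)}, T_p(C)^*(1)\bigr)
\end{align*}
with the submodule of the right-hand side singled out by the divisibility corresponding to the \emph{single} pair in which $E=\QQ$ and $\chi$ is trivial. (Here I have used that the rank is $r=1$ and that $\bidual^1$ is the identity on torsion-free modules, so by Hypothesis $(\mathrm{H}_3)$ the target is simply $\varprojlim_n H^1$.) To exhibit a basic Euler system $c \in \ES^b(T_p(C),\cK)$ agreeing with $\eta$ on $\QQ_\infty$, it then suffices to verify that $(\eta_{\QQ_n})_n$ satisfies this single divisibility; any preimage of $(\eta_{\QQ_n})_n$ under the restriction map then furnishes the desired $c$.

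The divisibility to verify is
\begin{align*}
\mathrm{char}_{\bLambda}\bigl(H^2_\Iw(\cO_{\QQ,S(\QQ)},T_p(C)^*(1))\bigr)
\;\Big|\;
\mathrm{char}_{\bLambda}\!\left(\frac{\varprojlim_n H^1(\cO_{\QQ_n,S(\QQ_n)},T_p(C)^*(1))}{\bLambda\cdot(\eta_{\QQ_n})_n}\right),
\end{align*}
where $\bLambda := \bLambda_{\QQ,\chi}$ for $\chi$ the trivial character, so that $\bLambda = \ZZ_p[[\Gal(\QQ_\infty/\QQ)]]$. For $\eta = z^{\mathrm{Kato}}$ this is exactly the divisibility in the cyclotomic Iwasawa Main Conjecture for $T_p(C)$ established by Kato. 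For an arbitrary $\eta \in \ES_1(T_p(C),\cK)$ the same strategy applies essentially unchanged: one runs the Kolyvagin--Rubin derivative machinery using the classes supplied by $\eta$ in place of Kato's specific classes, and the resulting bounds on the Pontryagin dual of the $\QQ_n$-Selmer group of $T_p(C)$ pass in the projective limit to the claimed characteristic-ideal divisibility. The technical hypotheses on $C$ (principally, surjectivity of the mod-$p$ representation $\bar{\rho}_{C,p}$, good ordinary reduction at $p$, and the vanishing of the pertinent cyclotomic $\mu$-invariants) are precisely what is needed for this Kolyvagin-style argument to apply and for Theorem \ref{basic-criterion-theorem} itself to be available.

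The main obstacle is thus to run the Kolyvagin--Rubin bound for an arbitrary $\eta$ in a manner compatible with the bidual formalism of Burns and Sano; in the rank-one case this reduces to the observation that $\bidual^1$ coincides with $(-)^{**}$ on torsion-free modules, which is harmless under Hypothesis $(\mathrm{H}_3)$. Once the divisibility is in hand, the extraction of $c$ is a formal consequence of the Iwasawa-theoretic description of $\ES^b(T_p(C),\cK)$ used in the proof of Theorem \ref{basic-criterion-theorem}. The application to Kato's $z^{\mathrm{Kato}}$ is then immediate, since it lies in $\ES_1(T_p(C),\cK)$.
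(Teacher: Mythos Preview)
Your overall strategy matches the paper's: establish the single divisibility for $E = \QQ$, combine it with $\mu$-vanishing to place $\cL_{\eta,\QQ}$ inside $\Det_{\bLambda_\QQ}(C_{\QQ_\infty,S(\QQ)}(T))$, and then lift via the surjectivity of the transition maps in $\VS_\Iw(T,\cK)$. The paper packages all of this as an immediate application of Corollary \ref{iwasawa-inclusion-holds-corollary}.

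Two points of divergence are worth flagging. First, the divisibility for an arbitrary $\eta$: rather than re-running a classical Kolyvagin--Rubin argument, the paper invokes the equivariant Kolyvagin/Stark system machinery of Burns--Sakamoto--Sano (specifically \cite[Thm.~3.6(iii)]{bss2}, via Proposition \ref{iwasawa-inclusion-holds-proposition}), which applies uniformly to every $\eta \in \ES_1(T,\cK)$ once $\QQ$ is checked to satisfy the standard hypotheses $(\mathrm{SH}_0)$--$(\mathrm{SH}_5)$. This is exactly what $(\mathrm{EH}_1)$ and $(\mathrm{EH}_2)$ secure; in particular, good ordinary reduction at $p$ is \emph{not} assumed, so your list of ``technical hypotheses'' is off. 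Second, the $\mu$-invariant vanishing is not taken as a hypothesis but is \emph{derived}: hypothesis $(\mathrm{EH}_3)$ (the existence of an abelian $L/\QQ$ outside $\cK$ with $C(L)[p^\infty] \neq 0$) feeds into a result of Coates--Sujatha \cite[Cor.~3.6]{coates-sujatha} on fine Selmer groups to give $\mu = 0$ for $H^2_\Iw(\cO_{L,S(L)},T^*(1))$, and hence for $H^2_\Iw(\cO_{\QQ,S(\QQ)},T^*(1))$. Apart from these details, your plan and the paper's proof are the same argument.
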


\begin{acknowledgments}
    It is a pleasure for the presently named author to thank David Burns for suggesting this project to him as well as for numerous stimulating conversations and comments concerning the present manuscript. He also wishes to thank the anonymous referee for their comments and suggestions which have improved the exposition of the article.
    
    The author would additionally like to extend his gratitude to Dominik Bullach for his careful reading of earlier versions of this article, as well as Andrew Graham, Daniel Macias Castillo, and Takamichi Sano for their helpful comments and discussions.

    The present article is based on material contained in the author's King's College London Ph.D. thesis \cite{daoud}, the preparation and writing of which was financially supported by the Engineering and Physical Sciences Research Council [EP/N509498/1].
\end{acknowledgments}

\section{Evidence via the theory of higher rank Kolyvagin and Stark systems}\label{evidence-section}

In this section we provide unconditional evidence for Question \ref{main-question} via the theory of equivariant Kolyvagin and Stark systems. We remark the extension $\cK/K$ need not be assumed to satisfy the decomposition in Hypothesis \ref{k-hypothesis}(2) herein and so we can, and will, ignore this assumption during the course of this section.

\subsection{The set-up}\label{standard-hypotheses-section}

We fix a uniformiser $\varpi$ of $R$ and write $\kk$ for the residue field $R/\varpi$. We denote by $\overline{T}$ the residual representation $T \otimes_{R} \kk$.

Given a $G_K$-module $A$ we write $K(A)$ for the minimal Galois extension of $K$ such that $G_{K(A)}$ acts trivially on $A$. We moreover denote
\begin{align*}
    K_{p^n} := K(1)K(\mu_{p^n}, (\cO_K^\times)^{1/p^n}),\quad K_{p^\infty} := \bigcup_{n > 0} K_{p^n}
\end{align*}
where $K(1)$ is the Hilbert class field of $K$.

For any field $E \in \Omega(\cK/K)$ we then write
\begin{align*}
    E_{p^n} := EK_{p^n},\quad E_{p^\infty} := EK_{p^\infty},\quad E(T)_{p^n} := E_{p^n}K(T/p^nT),\quad E(T)_{p^\infty} := E_{p^\infty}K(T)
\end{align*}
and denote $\cT = \cT_E$ for the induced representation $\Ind_{G_K}^{G_E}(T)$. Given this, we then have the following `standard hypotheses' on $T$ and $E$:
\begin{enumerate}
    \item[($\mathrm{SH}_0$)]{For almost all primes $\mathfrak{q}$ of $K$, the map $\Fr_\mathfrak{q}^{p^k}-1$ is injective on $T$ for all $k \geq 0$.}
    \item[($\mathrm{SH}_1$)]{$\overline{T}$ is an irreducible $\kk[G_K]$-module.}
    \item[($\mathrm{SH}_2$)]{There exists $\tau \in G_{E_{p^\infty}}$ such that $T/(\tau-1)T$ is a free $\cR$-module of rank one.}
    \item[($\mathrm{SH}_3$)]{The cohomology groups $H^1(E(T)_{p^\infty}/K, \overline{T})$ and $H^1(E(T)_{p^\infty}/K, \overline{T}^\lor(1))$ vanish.}
    \item[($\mathrm{SH}_4$)]{If $p=3$ then $\overline{T}$ and $\overline{T}^\lor(1)$ have no non-zero isomorphic $R[G_K]$-subquotients.}
    \item[($\mathrm{SH}_5$)]{For all $\mathfrak{q}$ in $S_\mathrm{min}(\cT_E)\backslash S_\infty(K)$, the cohomology group $H^0(K_\mathfrak{q}, \overline{T}^\lor(1))$ vanishes.}
\end{enumerate}

\begin{example}\label{standard-hypotheses-example}There are several examples of representations $T$ and fields in $\Omega(\cK/K)$ for which all of the above hypotheses are satisfied:
\begin{enumerate}
    \item{Let $K$ be a totally real field and $\psi: G_K \to \overline{\mathbb{Q}_p}^{\times}$ an even abelian character $\psi \neq 1$ of finite order. Let $T$ be the representation of $G_K$ which is equal to $\cR := \ZZ_p(\im(\psi))$ as an $\cR$-module and upon which $G_K$ acts via $\chi_\mathrm{cyc}\psi^{-1}$. If $E \in \Omega(\cK/K)$ is a field with the property that no finite place in $S(E)$ splits completely in $L$ then it is shown in \cite[Lem. 5.3]{bss2} that $T$ and $E$ satisfy all the hypotheses $(\mathrm{SH}_0)$ through $(\mathrm{SH}_5)$.}
    \item{Let $C/\QQ$ be an elliptic curve and suppose that the image of the representation $\rho: G_\QQ \to \mathrm{GL}_2(\ZZ_p) \cong \Aut(T)$ contains $\mathrm{SL}_2(\ZZ_p)$ where $T$ is the $p$-adic Tate module of $C$. If $E \in \Omega(\cK/K)$ is a field with the property that for every finite place $v \in S(E)$ the curve $C(\QQ_v)$ has no points of order $p$ then it is shown in \cite[Lem. 6.17]{bss2} that $T$ and $E$ satisfy all the hypotheses $(\mathrm{SH}_0)$ through $(\mathrm{SH}_5)$.}
\end{enumerate}
\end{example}

\subsection{The statement and proof of the result}

\subsubsection{Vertical determinantal systems and basic Euler systems}
In order to provide the statement of the precise version of Proposition \ref{evidence-theorem-pre}, we first briefly recall the definition of basic Euler systems due to Burns and Sano in \cite{bs}.

Throughout the sequel we use the complex $C_{E,S(E)}(T)$ recalled in Appendix \ref{cohomology-appendix} and write $\Theta_{T,E}$ for the projection map
\begin{align*}
    \Theta_{T,E}: \Det_{\cR[\cG_E]}(C_{E,S(E)}(T)) \to \bidual_{\cR[\cG_E]}^r H^1(\cO_{E,S(E)}, T)
\end{align*}
constructed in \cite[\S2.6.3]{bs} (denoted by $\Pi_F$ in \textit{loc. cit.}).

We then write $\VS(T,\cK)$ for the $\cR[[\Gal(\cK/K)]]$-module of `vertical determinantal systems' for the pair $(T,\cK)$, whose definition we now recall from \cite[Def. 2.9]{bs}. 

We set
\begin{align*}
    \VS(T,\cK) = \varprojlim_{E \in \Omega(\cK/K)} \Det_{\cR[\cG_E]}(C_{E,S(E)}(T))
\end{align*}
where the transition map $\nu_{E'/E}$ is defined to be the composite
\begin{align*}
    \nu_{E'/E}: \Det_{\cR[\cG_{E'}]}(C_{E',S(E')}(T)) &\twoheadrightarrow \Det_{\cR[\cG_{E}]}(C_{E,S(E')}(T))\\
    &\xrightarrow{\sim} \Det_{\cR[\cG_{E}]}(C_{E,S(E)}(T)) \otimes \bigotimes_{v \in S(E')\setminus S(E)} \Det_{\cR[\cG_E]}(\R\Gamma_f(K_v, T_E))^\#\\
    &\xrightarrow{\sim} \Det_{\cR[\cG_{E}]}(C_{E,S(E)}(T))
\end{align*}
Here the first arrow is induced by taking $\Gal(E'/E)$-coinvariants, the second arrow induced by the exact triangle (\ref{change-U-triangle}) and the final arrow is induced by resolving each term $\R\Gamma_f(K_v, T_E)$ via the resolution (\ref{finite-support-resolution}), applying the trivialisation-by-the-identity map and finally the involution $\cR[\cG_E]^\# \xrightarrow{\sim} \cR[\cG_E]$.

By \cite[Thm. 2.18]{bs} one knows that the tuple of maps $(\Theta_{T,E})_E$ induces a homomorphism
\begin{align*}
    \Theta_{T,\cK}: \VS(T,\cK) \to \ES_r(T,\cK)
\end{align*}

\subsubsection{The statement of the result}

For the rest of this section we assume that the pair $(T,\cK)$ is chosen so as to satisfy $\mathrm{(H_0)}$, $\mathrm{(SH_0)}$, $\mathrm{(SH_1)}$, $(\mathrm{SH_4})$ and Hypothesis \ref{k-hypothesis}.

We remark that if this is the case then \cite[Lem. 3.11]{bss2} implies that $(T,\cK)$ satisfies the properties $\mathrm{(H_1)}$ through $\mathrm{(H_3)}$. As such, the pair $(T,\cK)$ is admissible.

Given $E \in \Omega(\cK/K)$ we say that $E$ `satisfies standard hypotheses for $T$' if $T$ and $E$ satisfy the additional hypotheses ($\mathrm{SH}_2$) and ($\mathrm{SH}_5$).

The following is the precise version of Proposition \ref{evidence-theorem-pre}.

\begin{proposition}\label{evidence-theorem}
    Let $\eta \in \ES_r(T,\cK)$ be an Euler system. Then for any $E \in \Omega(\cK/K)$ satisfying standard hypotheses we have
    \begin{align*}
        \eta_E \in \Theta_{T,E}(\Det_{\cR[\cG_E]}(C_{E,S(E)}(T)))
    \end{align*}
    In particular, there exists a basic Euler system $c \in \ES^b(T,\cK)$ with the property that $c_E = \eta_E$.
\end{proposition}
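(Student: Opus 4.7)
The plan is to exploit the equivariant higher-rank Kolyvagin and Stark system machinery of Burns--Sakamoto--Sano \cite{bss2} in order to package $\eta_E$ as the ``trivial-level'' value of a Stark system, and then invoke the description of the module of Stark systems in terms of the determinant complex $C_{E,S(E)}(T)$ given by Burns--Sano in \cite{bs}. The overall structure is: Euler system $\leadsto$ Kolyvagin system $\leadsto$ Stark system $\leadsto$ element in the image of $\Theta_{T,E}$.

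First I would apply the higher-rank Euler-system-to-Kolyvagin-system construction of \cite{bss2} to $\eta$ to produce a Kolyvagin system $\kappa(\eta) \in \KS_r(T,E)$. This step uses the derivative operators of Kolyvagin applied to $\eta_F$ for $F$ running over fields of the form $E \cdot K(\mathfrak{n})$ where $\mathfrak{n}$ is a square-free product of ``Kolyvagin primes'', and the corestriction relations in the definition of an Euler system translate directly into the defining axioms of a Kolyvagin system. Hypothesis \ref{k-hypothesis} ensures that $\cK/K$ contains sufficiently many such auxiliary extensions $K(\mathfrak{n})$ and so the construction can be carried out entirely inside $\cK$.

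Next, under the hypotheses $(\mathrm{SH}_0)$--$(\mathrm{SH}_5)$ imposed on $T$ and $E$, the main comparison theorem of \cite{bss2} yields a canonical isomorphism $\StS_r(T,E) \xrightarrow{\sim} \KS_r(T,E)$ of $\cR[\cG_E]$-modules. Thus $\kappa(\eta)$ lifts uniquely to a Stark system $\epsilon(\eta) \in \StS_r(T,E)$. On the other hand, the central structural result of \cite[\S 2.6]{bs} (applied in the equivariant setting, and using $(\mathrm{H_3})$ to suppress the auxiliary set $\Sigma$) identifies $\StS_r(T,E)$ with a free rank-one $\cR[\cG_E]$-module generated by a basis of $\Det_{\cR[\cG_E]}(C_{E,S(E)}(T))$, in such a way that evaluation at the trivial auxiliary level is nothing other than the map $\Theta_{T,E}$. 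Consequently, the trivial-level component of \emph{any} Stark system automatically lies in $\Theta_{T,E}(\Det_{\cR[\cG_E]}(C_{E,S(E)}(T)))$.

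The remaining point is to identify the trivial-level component of $\epsilon(\eta)$ with $\eta_E$ itself. By construction of the derivative-operator map, the trivial-level component of $\kappa(\eta)$ is (up to a normalization that vanishes because $(\mathrm{H_3})$ allows $\Sigma = \varnothing$) precisely $\eta_E$; the isomorphism $\StS_r(T,E) \xrightarrow{\sim} \KS_r(T,E)$ respects this trivial-level evaluation. Combining these observations yields the desired inclusion. The main obstacle I anticipate is the careful bookkeeping needed to verify that in the equivariant higher-rank setting the Euler-system-to-Stark-system composite really does send $\eta$ to a system whose trivial-level value is $\eta_E$ on the nose (and not merely up to a unit or a spurious Euler-factor correction). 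Once this normalization compatibility is established, the theorem follows directly from the description of $\StS_r(T,E)$ in terms of $\Theta_{T,E}$.
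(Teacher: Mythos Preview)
Your proposal is correct and follows essentially the same route as the paper. The paper packages the argument via the commutative square
\[
\begin{tikzcd}[row sep=2.5em, column sep=3em]
    \VS(T,\cK) \arrow[r, "\Theta_{T,\cK}"] \arrow[d, twoheadrightarrow]&\ES_r(T,\cK) \arrow[d, "\cD_\cT"]\\
    \StS_r(\cT,\cF) \arrow[r, "\cR_\cT", "\sim"'] &\KS_r(\cT,\cF)
\end{tikzcd}
\]
(the left vertical map passing through horizontal determinantal systems), then writes $\cD_\cT(\eta) = x\cdot(\cD_\cT\circ\Theta_{T,\cK})(z)$ for a basis $z$ of $\VS(T,\cK)$ and evaluates at the empty ideal $1$; your normalization concern is exactly the compatibility encoded in this square, and evaluating at $1$ is what resolves it.
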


In order to prove this result we shall reduce it to a statement about equvivariant Kolyvagin and Stark systems. To do this, we first recall several important constructions related to these systems that are introduced in \cite{bs} and \cite{bss}.

\subsubsection{Kolyvagin and Stark systems}

We denote by $\cF$ the relaxed Selmer structure on $\cT_E$ as defined, for example, in \cite[Ex. 2.4]{bss2}. We remark that since ($\mathrm{SH}_5$) is satisfied in this case, $\cF$ coincides with both $\cF_{\mathrm{ur}}$ and $\cF_{\mathrm{can}}$ - the unramified and canonical Selmer structures respectively (\cite[Lem. 3.10]{bss2}).

We write $\KS_r(\cT, \cF)$ (resp. $\StS_r(\cT,\cF)$) for the $\cR[\cG_E]$-modules of Kolyvagin systems (resp. Stark systems) of rank $r$ for the pair $(\cT,\cF)$ as are defined in \cite[Def. 5.24]{bss} (resp. \cite[Def. 4.11]{bss}).

Then \cite[Thm. 3.6]{bss2} implies that, under the present hypotheses, there exists a canonical `Kolyvagin derivative' homomorphism
\begin{align*}
    \cD_\cT: \ES_r(T,\cK) \to \KS_r(\cT, \cF)
\end{align*}
and a canonical `regulator' isomorphism
\begin{align*}
    \cR_\cT: \StS_r(\cT,\cF) \xrightarrow{\sim} \KS_r(\cT,\cF)
\end{align*}

\subsubsection{Horizontal determinantal systems}
In order to ease notation in the sequel we shall henceforth write, for each $m \geq 1$, $\fr_m:= \cR/\varpi^m$ and $\cA_m := \cT/\varpi^m\cT$. In \cite[Def. 3.2]{bs}, the authors define a module of `horizontal determinantal systems' for the representation $\cA_m$. We briefly outline its definition here. To do this we first write, for each $m \geq 1$, $\cP_m$ for the set of primes $\mathfrak{q} \not\in S(\cF)$ of $K$ such that
\begin{itemize}
    \item{$\mathfrak{q}$ splits completely in $E_{p^m}$;}
    \item{$\cA_m/(\Fr_\mathfrak{q}-1)\cA_m \cong \mathfrak{r}_m$ as $\mathfrak{r}_m$-modules.}
\end{itemize}
We write $\cN_m = \cN_m(\cP_m)$ for the set of square-free products of primes in $\cP_m$ and denote, for each $\mathfrak{n} \in \cN_m$, $S_\mathfrak{n} := S \cup \set{ \mathfrak{q} \mid \mathfrak{n}}$ and $\nu(\fn)$ the number of prime divisors of $\fn$.

The $\mathfrak{r}_m[\cG_E]$-module of `horizontal determinantal systems' for the representation $\cA_m$ is given by
\begin{align*}
    \HS(\cA_m) := \varprojlim_{\mathfrak{n} \in \cN_m} \Det_{\mathfrak{r}_m[\cG_E]}(C_{K,S_\mathfrak{n}}(\cA_m))
\end{align*}
with respect to a particular bijective transition map where we write $C_{K,S_\mathfrak{n}}(\cA_m)$ for the complex $C_{K,S_\mathfrak{n}}(\cT_E) \otimes_R R/\varpi^mR$. The inclusion $\cN_{m+1} \subseteq \cN_m$ combines with the canonical codescent isomorphism
\begin{align*}
    \Det_{\mathfrak{r}_{m+1}[\cG_E]}(C_{K,S_\mathfrak{n}}(\cA_{m+1})) \otimes_{\fr_{m+1}[\cG_E]} \fr_{m}[\cG_E] \cong \Det_{\mathfrak{r}_{m}[\cG_E]}(C_{K,S_\mathfrak{n}}(\cA_{m}))
\end{align*}
for each $\fn \in \cN_{m+1}$ to imply the existence of a canonical surjective transition map
\begin{align*}
    \kappa_{m+1}: \HS(\cA_{m+1}) \to \HS(\cA_m)
\end{align*}
We now define the $\cR[\cG_E]$-module of horizontal determinantal systems for the representation $\cT$ to be
\begin{align*}
    \HS(\cT) := \varprojlim_{m \in \NN} \HS(\cA_m)
\end{align*}
where the inverse limit is taken with respect to the $\kappa_m$.

One now checks that there is a commutative diagram
\begin{center}
    \begin{tikzcd}
        \HS(\cA_{m+1}) \arrow[r] \arrow[d, "\kappa_{m+1}"] &\StS_r(\cA_{m+1}, \cF)\arrow[d]\\
        \HS(\cA_{m}) \arrow[r] &\StS_r(\cA_m, \cF)
    \end{tikzcd}
\end{center}
where the horizontal maps are those given by \cite[Thm. 3.3]{bs} and the right-hand vertical map is the one defined in \cite[\S 4.3]{bss}. By passing to the limit over $m \in \NN$ one then deduces the existence of a canonical projection map
\begin{align*}
    \Psi_{\cT} : \HS(\cT) \to \StS_r(\cT,\cF)
\end{align*}
In fact, the assumed validity of ($\mathrm{SH}_5$) combines with \cite[Thm 3.3, Rem. 3.10]{bs} to imply that $\Psi_{\cT}$ is an isomorphism.

On the other hand, for each $m \geq 1$ there is a natural surjective projection map
\begin{align*}
    \VS(T,\cK) \twoheadrightarrow \Det_{\fr_m[\cG_E]}(C_{K,S}(\cA_m)) \xrightarrow{\sim} \HS(\cA_m)
\end{align*}
This projection map is clearly compatible with the $\kappa_m$ so that the universal property of the inverse limit implies the existence of a surjection
\begin{align*}
    \Pi_{\cT}: \VS(T,\cK) \twoheadrightarrow \HS(\cT)
\end{align*}

\subsubsection{Proof of Proposition \ref{evidence-theorem}}

Given the constructions of the previous section, we are now in a position to prove Proposition \ref{evidence-theorem}. We first observe that by taking the inverse limit over $m$ in the diagram of \cite[Thm. 4.16]{bs} and applying \cite[Thm. 3.6(i)]{bss2} one deduces the existence of a commutative diagram
\begin{equation}\label{reduction-diagram}
    \begin{tikzcd}[row sep=3em, column sep=3em]
        \VS(T,\cK) \arrow[r, "\Theta_{T,\cK}"] \arrow[d, twoheadrightarrow, "\Psi_\cT \circ \Pi_{\cT}"]&\ES_r(T,\cK) \arrow[d, "\cD_\cT"]\\
        \StS_r(\cT,\cF) \arrow[r, "\cR_\cT", "\sim"'] &\KS_r(\cT,\cF)
    \end{tikzcd}
\end{equation}
of $\cR[[\Gal(\cK/K)]]$-modules. By the commutativity of this diagram, one knows that the composite $\cD_\cT \circ \Theta_{T,\cK}$ is surjective. As such, if we are given an $\cR[[\Gal(\cK/K)]]$-basis $z$ of $\VS(T,\cK)$ then $(\cD_\cT \circ \Theta_{T,\cK})(z)$ is an $\cR[\cG_E]$-generator of $\KS_r(\cT,\cF)$. Now fix an Euler system $\eta \in \ES_r(\cT,\cK)$. Then there exists $x \in \cR[\cG_E]$ such that $\cD_{\cT}(\eta) = x\cdot (\cD_\cT \circ \Theta_{T,\cK})(z)$. Evaluating this equality of equivariant Kolyvagin systems at the empty product ideal $1$, we have that
\begin{align*}
    \eta_E = \cD_{\cT}(\eta)_1 = x\cdot (\cD_\cT \circ \Theta_{T,\cK})(z)_1 = x\cdot \Theta_{T,\cK}(z)_E \in \Theta_{T,E}(\Det_{\cR[\cG_E]}(C_{E,S(E)}(T)))
\end{align*}
This establishes the first assertion of Proposition \ref{evidence-theorem}.

Finally, in order to construct a basic Euler system with the stated property, it suffices to choose any pre-image $\overline{x}$ of $x$ under the natural surjective map $\cR[[\Gal(\cK/K)]] \to \cR[\cG_E]$ and set $c := \Theta_{T,\cK}(\overline{x} \cdot z)$. Indeed, it is then clear that $c \in \ES^b(T,\cK)$ and that
\begin{align*}
    c_E = \Theta_{T,\cK}(\overline{x}\cdot z)_E = x\cdot \Theta_{T,\cK}(z)_E = \eta_E
\end{align*}
as required.
\qed

\section{Iwasawa theory of Euler systems}\label{iwasawa-theory-section}

In this section we shall give a useful Iwasawa-theoretic reinterpretation of the module of vertical systems $\VS(T,\cK)$ constructed by Burns and Sano in \cite[Def. 2.8]{bs}.

We recall that, by hypothesis, there is a decomposition $\cK = \cK'K_\infty$. For each $E \in \Omega(\cK'/K)$, we denote by $E_\infty$ the compositum $EK_\infty$, by $E_n$ the $n^{th}$ layer of the $\ZZ_p$-extension $E_\infty/E$ and by $\bLambda_{E}$ the equivariant Iwasawa algebra $\cR[[\Gal(E_\infty/K)]] \cong \cR[\cG_E][[\Gal(K_\infty/K)]]$. We moreover write $Q(\bLambda_E)$ for the total quotient ring of $\bLambda_E$. Similarly, we write $\Lambda = \cR[[\Gal(E_\infty/E)]] = \cR[[\Gal(K_\infty/K)]]$ for the non-equivariant Iwasawa algebra (which is independent of $E$) and $Q(\Lambda)$ for its total ring of quotients. Finally, we shall denote for each $i \in \set{1,2}$ and $n \in \NN$ the functor
\begin{align*}
    H^i_{\Iw}(\cO_{E,S(E)},-) := \varprojlim_{n \in \NN} H^i(\cO_{E_n,S(E_n)}, -)
\end{align*}
computing Iwasawa cohomology

\subsection{Iwasawa theoretic setup}

Throughout the rest of this section we assume the following conjecture:

\begin{conjecture}[Weak Leopoldt Conjecture]\label{weak-leopoldt-conjecture}
    For every $E \in \Omega(\cK'/K)$, $H^2_\Iw(\cO_{E,S(E)}, T)$ is a torsion $\Lambda$-module.
\end{conjecture}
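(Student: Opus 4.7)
The claim is the classical Weak Leopoldt Conjecture for the $p$-adic representation $T^*(1)$ over the $\ZZ_p$-extension $E_\infty/E$. My overall plan is to reduce torsion-ness of $H^2_\Iw(\cO_{E,S(E)}, T^*(1))$ to a rank equality for $H^1_\Iw$, via Iwasawa-theoretic Poitou--Tate duality combined with the global Euler--Poincar\'e characteristic formula, and then to extract the remaining rank bound from structural information about the representation at hand.

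First I would apply Jannsen's Iwasawa-theoretic version of Poitou--Tate duality to relate $H^2_\Iw(\cO_{E,S(E)}, T^*(1))$, up to a pseudo-null error term, to the Pontryagin dual of the discrete cohomology group $H^1(\cO_{E_\infty, S(E_\infty)}, T \otimes_{\ZZ_p} \QQ_p/\ZZ_p)$, with the $\Lambda$-action on the dual twisted by the canonical involution. This reframes the conjecture as a cofiniteness statement for a familiar discrete Galois cohomology module over the full tower $E_\infty$.

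Next, applying the global Euler--Poincar\'e characteristic formula level-by-level along $E_\infty/E$ and passing to the limit expresses the alternating sum of $\Lambda$-ranks of the modules $H^i_\Iw(\cO_{E,S(E)}, T^*(1))$ purely in terms of local data at archimedean places of $E$. Since hypothesis $(\mathrm{H_2})$ forces $H^0_\Iw(\cO_{E,S(E)}, T^*(1))$ to vanish (it already vanishes at every finite layer $E_n$), the torsion-ness of $H^2_\Iw$ collapses to the assertion that $\rank_\Lambda H^1_\Iw(\cO_{E,S(E)}, T^*(1))$ attains its generic expected value dictated by that local formula.

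The hard part is precisely this last rank equality. For $T = \ZZ_p(\chi)$ with $\chi$ an even finite-order character of a totally real base and $K_\infty$ the cyclotomic $\ZZ_p$-extension, it follows from Iwasawa's theorem on the growth of class groups in the tower combined with Kummer theory; for ordinary representations coming from geometry one can deploy techniques of Greenberg and Perrin-Riou. In full generality, however, no proof is known, as the Weak Leopoldt Conjecture is equivalent to a weak non-vanishing statement for certain $p$-adic regulators and remains a fundamental open problem. I expect this rank lower bound to be the main obstacle — indeed, its unresolved status in general is precisely why the author treats it as a blanket hypothesis rather than a theorem.
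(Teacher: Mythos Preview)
Your final paragraph is the correct assessment: the paper does not prove this statement at all. It is labelled as a \emph{conjecture} (note the \texttt{conjecture} environment), and immediately before it the paper writes ``Throughout the rest of this section we assume the following conjecture''. The subsequent Remark simply records the two cases where it is known (Iwasawa for $T = \ZZ_p(1)$ over the cyclotomic $\ZZ_p$-extension, and Kato for the Tate module of an elliptic curve over $\QQ$), which are precisely the cases later invoked in the applications of \S4. So there is no proof in the paper to compare your sketch against; your reduction via Euler--Poincar\'e characteristics to a rank equality for $H^1_\Iw$ is the standard reformulation, and your acknowledgement that the remaining rank bound is open in general matches the paper's treatment of the statement as a standing hypothesis rather than a result.
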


\begin{remark}\label{weak-leopolodt-remark}
    The above formulation of Conjecture \ref{weak-leopoldt-conjecture} is due to Perrin-Riou in \cite[\S 1.3]{perrin-riou1}. It is known to be true in several settings in arithmetic of which we mention the following two cases:
    \begin{enumerate}
        \item{When $T = \ZZ_p(1)$ and $K_\infty/K$ is the cyclotomic $\ZZ_p$-extension then the weak Leopoldt conjecture holds due to a result of Iwaswawa \cite{iwasawa} (and see also \cite[\S 1.3 Rem (ii)]{perrin-riou1}).}
        \item{When $K = \QQ$ and $T$ is the $p$-adic Tate module of an elliptic curve over $K$ then the validity of the weak Leopoldt conjecture follows from a result of Kato in \cite[Thm. 12.4(i)]{kato3}}.
    \end{enumerate}
\end{remark}

Fix now a field $E \in \Omega(\cK'/K)$ and a finite set of places $U$ of $K$ containing $S$. We define a complex by means of the derived limit
\begin{align*}
    C_{E_\infty, U}(T) := \varprojRlim_{n \in \NN} C_{E_n,U(E_n)}(T)
\end{align*}
of $\bLambda_E$-modules. Then by \cite[Prop. 3.5]{bd} one knows that $C_{E_\infty, U}(T)$ is perfect and there is a canonical identification
\begin{align*}
    H^0(C_{E_\infty, U}(T)) \cong \varprojlim_{n \in \NN} H^1(\cO_{E_n,U(E_n)}, T)
\end{align*}
and an exact sequence
\begin{align}
    0 \to \varprojlim_{n \in \NN} H^2(\cO_{E_n,U}, T) \to H^1(C_{E_\infty, U}(T)) \to \varprojlim_{n \in \NN} Y_{E_n}(T)^* \to 0\label{iwasawa-cohomology-exact-sequence}
\end{align}
of $\bLambda_E$-modules.

By the assumed validity of the weak Leopoldt conjecture, we have a natural projection map $\Theta^\infty_{T,E}$ given by the composite
\begin{align*}
    &\Det_{\bLambda_E}(C_{E_\infty, S(E)}(T))\\
    &\hookrightarrow \Det_{Q(\bLambda_E)}(Q(\bLambda_E) \otimes_{\bLambda_E} C_{E_\infty, S(E)}(T))\\
    &\xrightarrow{\sim} \left(Q(\bLambda_E) \otimes_{\bLambda_E} \exprod_{\bLambda_E}^r \varprojlim_{n \in \NN} H^1(\cO_{E_n, S(E_n)}, T)\right) \otimes_{Q(\bLambda_E)} \left(Q(\bLambda_E) \otimes_{\bLambda_E} \exprod_{\bLambda_E}^r \varprojlim_{n \in \NN} Y_{E_n}(T)^*\right)^*\\
    &\xrightarrow{\sim} Q(\bLambda_E) \otimes_{\bLambda_E} \exprod_{\bLambda_E}^r \varprojlim_{n \in \NN} H^1(\cO_{E_n, S(E_n)}, T)
\end{align*}
where the second arrow is the canonical passage-to-cohomology map and the third arrow is induced by fixing a family of compatible bases of the free rank $r$ modules $Y_{E_n}(T)^*$ for all $n \in \NN$.

\subsubsection{Projection maps and biduals}

In this section we record the following useful Lemma in which we use the theory of standard representatives introduced in \cite[\S A.4]{bs}.

\begin{lemma}\label{bidual-compatibility-lemma}\text{}
    \begin{enumerate}
        \item{There is a canonical identification of $\bLambda_E$-modules
            \begin{align}
                \bidual_{\bLambda_E}^r H^1_\Iw(\cO_{E,S(E)}, T) \xrightarrow{\sim} \varprojlim_{n \in \NN} \bidual_{\cR[\cG_{E_n}]}^r H^1(\cO_{E_n,S(E_n)}, T)\label{bidual-inverse-limit-iso}
            \end{align}} 
        \item{There exists a quadratic standard representative $(P, \psi, \set{b_1,\dots,b_d})$ of the complex $C_{E_\infty, S(E)}(T)$ with respect to the surjection
            \begin{align*}
                H^1(C_{E_\infty, S(E)}(T)) \to \varprojlim_{n \in \NN} Y_{E_n}(T)^*
            \end{align*}
        }
        \item{The image of $\Theta_{T,E}^\infty$ is contained in $\bidual_{\bLambda_E}^r H^1_\Iw(\cO_{E,S(E)}, T)$}
    \end{enumerate}
\end{lemma}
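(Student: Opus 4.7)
I would prove the three claims in the order (2), (1), (3), using the quadratic standard representative produced in (2) as the technical backbone for both (1) and (3).

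For (2), the plan is to appeal directly to the existence theorem for standard representatives in [bs, \S A.4]. The conditions to be checked are that $C_{E_\infty, S(E)}(T)$ is a perfect complex of $\bLambda_E$-modules (already recorded in the excerpt via [bd, Prop. 2.5]) and that the target $\varprojlim_n Y_{E_n}(T)^*$ of the surjection is a free $\bLambda_E$-module. The latter reduces to Hypothesis $(\mathrm{H}_0)$ together with the compatibility of the induction $\Ind_{G_K}^{G_{E_n}}$ along the tower. Conjecture \ref{weak-leopoldt-conjecture} enters to guarantee that the kernel of the surjection is $\Lambda$-torsion, so that (\ref{iwasawa-cohomology-exact-sequence}) yields an $H^1$ whose torsion-free quotient has the right free rank to admit a quadratic representative $[P \xrightarrow{\psi} P]$ equipped with the desired basis $\{b_1, \dots, b_d\}$.

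For (1), abbreviate $M := H^1_\Iw(\cO_{E,S(E)}, T^*(1))$ and $M_n := H^1(\cO_{E_n, S(E_n)}, T^*(1))$. The plan is to unfold the bidual and commute the inverse limit past $\Hom$, using $\bLambda_E = \varprojlim_n \cR[\cG_{E_n}]$ and the finite generation of $\exprod^r_{\bLambda_E} M^*$:
\begin{align*}
    \bidual^r_{\bLambda_E} M \;=\; \Hom_{\bLambda_E}\!\left(\exprod^r_{\bLambda_E} M^*,\; \bLambda_E\right) \;\cong\; \varprojlim_n \Hom_{\cR[\cG_{E_n}]}\!\left(\exprod^r_{\bLambda_E} M^* \otimes_{\bLambda_E} \cR[\cG_{E_n}],\; \cR[\cG_{E_n}]\right).
\end{align*}
The substantive step is then to establish a base-change identification
\begin{align*}
    \exprod^r_{\bLambda_E} M^* \otimes_{\bLambda_E} \cR[\cG_{E_n}] \;\xrightarrow{\sim}\; \exprod^r_{\cR[\cG_{E_n}]} M_n^*,
\end{align*}
which I would prove by inserting the standard representative from (2): both $M^*$ and $M_n^*$ acquire explicit four-term presentations in terms of $P^*$, $\psi^*$ and their base changes, and because $P$ is finitely generated projective these presentations commute with $-\otimes_{\bLambda_E} \cR[\cG_{E_n}]$, after which exterior powers commute with base change of presented modules.

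For (3), the quadratic representative trivialises $\Det_{\bLambda_E}(C_{E_\infty, S(E)}(T))$ to a rank-one free $\bLambda_E$-module with canonical generator, and renders the passage-to-cohomology arrow in the definition of $\Theta_{T,E}^\infty$ entirely explicit: its canonical generator maps to an element of $Q(\bLambda_E) \otimes_{\bLambda_E} \exprod^r_{\bLambda_E} M$ expressed in terms of the basis $\{b_1,\dots,b_d\}$ and minors of $\psi$. Invoking the integrality criterion of [bs, Prop. A.8] (applied at each finite level, and promoted to $\bLambda_E$ via (1)), membership in the bidual reduces to pairing this element against an arbitrary $\Phi \in \exprod^r_{\bLambda_E} M^*$ and verifying the value lies in $\bLambda_E$, which is a determinant computation over a two-term complex of projectives. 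The hardest step is the base-change identity in (1): in general, duals and exterior powers of finitely generated $\bLambda_E$-modules do not specialise well, and $M$ itself is typically not free. The quadratic standard representative is precisely the device that bypasses this, replacing $M$ and $M_n$ by the cohomology of two-term complexes of free modules, where every operation under consideration is transparent under base change.
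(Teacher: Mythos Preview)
Your plan is essentially correct and tracks the paper's argument closely. The paper is more economical: it dispatches (1) by citing \cite[Lem.\ B.15]{sakamoto} applied to $C_{E_\infty,S(E)}(T)$, and (2) by citing \cite[Lem.\ 7.10]{BuSaNC}, rather than reproving either from scratch. Your sketch for (1) is the right strategy (and indeed this is what the cited lemma does), though your phrasing about ``four-term presentations of $M^*$ and $M_n^*$'' is slightly loose---what one actually uses is that $M$ embeds into the free module $P$ with $\bLambda_E$-torsion-free cokernel, which forces the dual $P^* \to M^*$ to be surjective and makes the base-change step go through.

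For (3) the paper follows the same route you propose---make $\Theta_{T,E}^\infty$ explicit via the standard representative, descend to finite level, and invoke the integrality result there---but with one organisational difference worth noting. Rather than pairing against arbitrary $\Phi$ via \cite[Prop.\ A.8]{bs}, the paper invokes \cite[Lem.\ A.11(ii)]{bs} directly at each finite level (which already asserts that the analogous finite-level projection lands in the bidual), and then packages the passage to the limit into an explicit commutative square comparing $\Theta_{T,E}^\infty$ with $\varprojlim_n \Theta_{T,E_n}$. This square is not just bookkeeping: it is reused later to show that the isomorphism of (1) is precisely the one induced by the right-hand vertical arrow, which matters for the compatibility arguments in Lemma~\ref{vertical-systems-reinterpretation-proposition}. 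Your approach would prove the lemma but would not leave you with that diagram in hand.
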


\begin{proof}

Part (1) is given by applying \cite[Lem. B.15]{sakamoto} to the complex $C_{E_\infty, S(E)}(T)$ and Part (2) is proved in \cite[Lem. 7.10]{BuSaNC}.

We prepare for the proof of the remainder of the above Lemma by first collecting some useful notations. We write $\mathfrak{S}_n$ for the permutation group on $m$ elements and for any $1 \leq n \leq m$ we write
\begin{align*}
    \mathfrak{S}_{m,n} := \set{\sigma \in \mathfrak{S}_s | \sigma(1) < \cdots < \sigma(n) \text{ and } \sigma(n+1) \leq \cdots \leq \sigma(m)}
\end{align*}
Given a commutative ring $R$ and a finitely generated $R$-module $X$, suppose to be given $\phi \in X^*$. For every $m \geq 1$ we define a map
\begin{align*}
    \exprod_R^m X &\to \exprod_R^{m-1} X\\
    x_1 \wedge \cdots \wedge x_m &\mapsto \sum_{1 \leq i \leq m} (-1)^{i-1}(x_1 \wedge \cdots \wedge x_{i-1} \wedge x_{i+1} \wedge \cdots \wedge x_m)
\end{align*}
which, by abuse of notation, we also denote by $\phi$. Now given a family of maps $\set{\phi_i}_{1 \leq i \leq m}$ in $X^*$, we define for every pair of natural numbers $m > n$ the map $\exprod_{n < i \leq m} \phi_i$ to be the composite $\phi_{n+1} \circ \cdots \circ \phi_m$.

Given these notations we now finish the proof of the Lemma. For each $1 \leq i \leq d$ denote $\psi_i := b_i^*\circ \psi$ where $(b_i^*)_i$ is the dual basis to $(b_i)_i$ in $P^*$.

By \cite[Lem. A.7(i),(ii)]{bs} one knows that $\Theta_{T,E}^\infty$ coincides with the map induced by the assignment
\begin{align*}
    \Det_{\bLambda_E}(C_{E_\infty, S(E)}(T)) = \exprod_{\bLambda_E}^d P \otimes \exprod_{\bLambda_E}^d P^* &\to \exprod_{\bLambda_E}^r P\\
    (b_1 \wedge \cdots \wedge b_d) \otimes (b_1^* \wedge \cdots \wedge b_d^*) &\mapsto \left(\exprod_{r < i \leq d} \psi_i\right)(b_1 \wedge \cdots \wedge b_d)
\end{align*}

On the other hand, if one denotes for each $1 \leq i \leq d$ and $n \in \NN$ the image of $b_{i}$ under the codescent map $P \to P_n$ by $b_{n,i}$ then $\set{b_{n,i}}_{1 \leq i \leq d}$ is a basis of $P_n$ and the complex $[P_{n} \xrightarrow{\psi_{n}} P_{n}]$ constitutes a quadratic standard representative of $C_{E_n,S(E_n)}(T)$.

In particular, if we write $\psi_{n,i} = b_{n,i}^* \circ \psi_n$ then the map induced by the assignment
\begin{align}\label{finite-projection-map}
    \Det_{\cR[\cG_{E_n}]}(C_{E_n, S(E_n)}(T)) = \exprod_{\cR[\cG_{E_n}]}^d P_n \otimes \exprod_{\cR[\cG_{E_n}]}^d P_n^* &\to \exprod_{\cR[\cG_{E_n}]}^r P_n\notag\\
    (b_{n,1} \wedge \cdots \wedge b_{n,d}) \otimes (b_{n,1}^* \wedge \cdots \wedge b_{n,d}^*) &\mapsto \left(\exprod_{r < i \leq d} \psi_{n,i}\right)(b_{n,1} \wedge \cdots \wedge b_{n,d})
\end{align}
has image inside $\bidual_{\cR[\cG_{E_n}]}^r H^1(\cO_{E_n,S(E_n)}, T)$ by \cite[Lem. A.11(ii)]{bs}. The explicit description of these projection maps then implies that we have a commutative diagram
\begin{equation}\label{determinant-comparison-diagram}
    \begin{tikzcd}[row sep=2.5em, column sep=5em]
        \displaystyle\Det_{\bLambda_E}(C_{E_\infty, S(E)}(T)) \arrow[d, "\rho_{T,E}"', "\sim" slanted] \arrow[r, "\Theta_{T,E}^\infty"] &\displaystyle\exprod_{\bLambda_E}^r P \arrow[d, "\sim" slanted]\\
        \displaystyle\varprojlim_n \Det_{\cR[\cG_{E_n}]}(C_{E_n, S(E_n)}(T)) \arrow[r, "\varprojlim_n \Theta_{T,E_n}"] &\displaystyle\varprojlim_{n \in \NN} \exprod_{\cR[\cG_{E_n}]}^r P_n
    \end{tikzcd}
\end{equation}
where the bottom left limit is taken with respect to the maps $\nu_{E_m/E_n}$, the left hand vertical arrow is induced by the codescent isomorphism (which is an isomorphism since it is a surjective map of free $\bLambda_E$-modules of rank one), the right hand vertical arrow is the natural one and the bottom arrow is the limit over $n$ of the map (\ref{finite-projection-map}).

The Lemma now follows by noting that the isomorphism given in the first part of the statement is induced by the right hand vertical map in the above diagram.
\end{proof}

\subsection{Reinterpreting vertical determinantal systems}

To ease notation we denote, for any $E \in \Omega(\cK'/K)$ and finite place $v$ of $K$ not in $S(E)$, the complex of $\bLambda_E$-modules given by the derived limit
\begin{align*}
    P_{E_\infty, v}(T) := \varprojRlim_{n \in \NN} \R\Gamma_f(K_v, T_{E_n}^*(1))
\end{align*}
Observe that $P_{E_\infty,v}(T)$ is isomorphic in $\Der^p(\bLambda_E)$ to the complex
\begin{align}
    \left[{\varprojlim_{n \in \NN} T^*_{E_n(1)}} \xrightarrow{(1-\Fr_{v}^{-1})_{n \in \NN}} {\varprojlim_{n \in \NN} T^*_{E_n}(1)}\right]\label{P-E-infinity-resolution}
\end{align}
where the first term is placed in degree 0. Now fix a pair of fields $E \subseteq E'$ in $\Omega(\cK'/K)$. Then by passing to the limit over $n$ in the exact triangle (\ref{change-U-triangle}) one deduces the existence of an exact triangle
\begin{align}
    C_{E_\infty, S(E')}(T) \to C_{E_\infty, S(E)}(T) \to \bigoplus_{v \in S(E')\setminus S(E)} P_{E_\infty, v}(T)^*[-1]\label{change-U-triangle-infinity}
\end{align}
in $\Der^p(\bLambda_E)$.
We now define a map
\begin{align}
    \nu_{E'/E}^\infty: \Det_{\bLambda_{E'}}(C_{E'_\infty, S(E')}(T)) &\twoheadrightarrow \Det_{\bLambda_E}(C_{E_\infty, S(E')}(T))\tag{$\nu_{E'/E}^{\infty,1}$}\\
    &\xrightarrow{\sim} \Det_{\bLambda_E}(C_{E_\infty, S(E)}(T)) \otimes \bigotimes_{v \in S(E')\setminus S(E)} \Det_{\bLambda_E}(P_{E_\infty, v}(T))^\#\notag\\
    &\xrightarrow{\sim} \Det_{\bLambda_E}(C_{E_\infty,S(E)}(T))\tag{$\nu_{E'/E}^{\infty,2}$}
\end{align}
where the first arrow is induced by base-change to $\bLambda_{E}$ and the inverse limit of the codescent isomorphisms (\ref{codescent-iso}), the second arrow is induced by the exact triangle (\ref{change-U-triangle-infinity}), and the third arrow is induced by resolving each term $P_{E_\infty, v}$ as in (\ref{P-E-infinity-resolution}), applying the canonical trivialisation-by-the-identity map
\begin{align*}
    \Det_{\bLambda_E}(P_{E_\infty, v}(T)) = \Det_{\bLambda_E}\left(\varprojlim_{n \in \NN} T^*_{E_n}(1)\right) \otimes_{\bLambda_E} \Det_{\bLambda_E}^{-1}\left(\varprojlim_{n \in \NN} T^*_{E_n}(1)\right) \xrightarrow{\sim} \bLambda_E
\end{align*}
and then the involution isomorphism $\bLambda_E^\# \xrightarrow{\sim} \bLambda_E$.

Henceforth we shall use the natural identification of rings $\varprojlim_{E \in \Omega(\cK/K)} \bLambda_E \cong \cR[[\Gal(\cK/K)]]$ without further explicit reference.

In the following Lemma we write $\VS_\Iw(T,\cK)$ for the $\cR[[\Gal(\cK/K)]]$-module given by the inverse limit $\varprojlim_{E \in \Omega(\cK'/K)} \Det_{\bLambda_E}(C_{E_\infty, S(E)}(T))$ taken with respect to the maps $\nu_{E'/E}^\infty$.

\begin{lemma}\label{vertical-systems-reinterpretation-proposition}
    There is a commutative diagram of $\cR[[\Gal(\cK/K)]]$-modules
    \begin{equation}\label{vs-infinity-diagram-equation}
        \begin{tikzcd}[row sep = 4em, column sep=4em]
            \VS_\Iw(T,\cK) \arrow[dr, "\Theta_{T,\cK}^\infty", hookrightarrow] \arrow[d, "\rho_{T,\cK}"', "\sim"' slanted]\\
            \VS(T,\cK) \arrow[r, "\Theta_{T,\cK}", hookrightarrow] &\ES_r(T,\cK)
        \end{tikzcd}
    \end{equation}
    where $\rho_{T,\cK}$ is the tuple of maps $(\rho_{T,E})_E$ and $\Theta_{T,\cK}^\infty$ is the composite
    \begin{align*}
        \VS_\Iw(T,\cK) &\xrightarrow{(\Theta_{T,E}^\infty)_E} \prod_{E \in \Omega(\cK'/K)} \bidual_{\bLambda_E}^r H^1_\Iw(\cO_{E,S(E)}, T)\\
        &\xrightarrow[\sim]{(\ref{bidual-inverse-limit-iso})} \prod_{E \in \Omega(\cK'/K)} \varprojlim_{n \in \NN} \bidual_{\cR[\cG_{E_n}]}^r H^1(\cO_{E_n,S(E_n)}, T)\\
        &\to \prod_{E \in \Omega(\cK/K)} \bidual_{\cR[\cG_E]}^r H^1(\cO_{E, S(E)}, T)
    \end{align*}
\end{lemma}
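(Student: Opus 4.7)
The strategy is to observe that the collection of subfields of the form $E_n$ for $E \in \Omega(\cK'/K)$ and $n \in \NN$ is cofinal in $\Omega(\cK/K)$ (since $\cK = \cK' K_\infty$), and then to express both $\VS(T,\cK)$ and $\VS_\Iw(T,\cK)$ as double inverse limits over $(E,n)$. The inner limit over $n$ on the classical side will then be identified with the Iwasawa determinant via the map $\rho_{T,E}$ from Lemma \ref{bidual-compatibility-lemma}, and the outer limit over $E \in \Omega(\cK'/K)$ will agree on both sides once we verify that the transition maps are compatible.

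First, I would show that the family $(\rho_{T,E})_{E \in \Omega(\cK'/K)}$ is compatible with the transition maps $\nu_{E'/E}^\infty$ and $\varprojlim_{n,m}\nu_{E'_m/E_n}$. For fixed $E \subseteq E'$ in $\Omega(\cK'/K)$, the map $\nu_{E'/E}^\infty$ was constructed in direct analogy with the finite-level $\nu_{E'_m/E_n}$: both arise from (i) taking suitable coinvariants (codescent from $\bLambda_{E'}$ to $\bLambda_E$, resp. $\cR[\cG_{E'_m}]$ to $\cR[\cG_{E_n}]$), (ii) the exact triangle that changes the set of places, and (iii) the trivialisation-by-the-identity of the finite-support complex followed by the involution isomorphism. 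The key input is that the exact triangle (\ref{change-U-triangle-infinity}) is literally the inverse limit of the finite-level triangles (\ref{change-U-triangle}), and the complex $P_{E_\infty, v}(T)$ is resolved by the inverse limit of the resolutions (\ref{finite-support-resolution}), as expressed by (\ref{P-E-infinity-resolution}). Combined with the functoriality and base-change property of $\Det$, this gives commutativity of the square relating $\nu_{E'/E}^\infty$ to $\varprojlim_{n,m} \nu_{E'_m/E_n}$ through $\rho_{T,E}$ and $\rho_{T,E'}$.

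Granting this compatibility, the maps $(\rho_{T,E})_E$ assemble into a morphism $\rho_{T,\cK}$ from $\VS_\Iw(T,\cK)$ into $\varprojlim_{E \in \Omega(\cK'/K)} \varprojlim_n \Det_{\cR[\cG_{E_n}]}(C_{E_n,S(E_n)}(T))$. Since each $\rho_{T,E}$ is an isomorphism (surjectivity of codescent on free rank-one modules, as noted in the proof of Lemma \ref{bidual-compatibility-lemma}), so is their inverse limit. The cofinality of $\{E_n\}$ in $\Omega(\cK/K)$ then identifies the target with $\VS(T,\cK)$, showing that $\rho_{T,\cK}$ is the required isomorphism.

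For commutativity of the triangle in (\ref{vs-infinity-diagram-equation}), the diagram (\ref{determinant-comparison-diagram}) already records precisely this identity at each finite level $E \in \Omega(\cK'/K)$, namely that $\Theta_{T,E}^\infty$ followed by the natural projection into $\varprojlim_n \bidual_{\cR[\cG_{E_n}]}^r H^1(\cO_{E_n, S(E_n)}, T^*(1))$ coincides with $\rho_{T,E}$ followed by $\varprojlim_n \Theta_{T,E_n}$. Assembling this identity across all $E$ and projecting to each $F \in \Omega(\cK/K)$ yields $\Theta_{T,\cK} \circ \rho_{T,\cK} = \Theta_{T,\cK}^\infty$. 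The injectivity of $\Theta_{T,\cK}$ is \cite[Thm. 2.18]{bs}, and that of $\Theta_{T,\cK}^\infty$ then follows as $\rho_{T,\cK}$ is an isomorphism. I anticipate that the most delicate step is the verification of the transition-map compatibility in the second paragraph, since one must carefully untangle the definitions of $\nu_{E'/E}^\infty$ and $\nu_{E'/E}$ and check that the trivialisations via (\ref{finite-support-resolution}) at finite level glue coherently under the inverse limit to the trivialisation via (\ref{P-E-infinity-resolution}); all other steps are essentially formal consequences of cofinality and the structure of $\rho_{T,E}$ already established.
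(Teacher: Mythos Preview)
Your proposal is correct and follows essentially the same approach as the paper: verify that the $\rho_{T,E}$ are compatible with the respective transition maps (the paper's diagram (\ref{determinant-comparison-lemma-3})), then pass to the inverse limit and invoke diagram (\ref{determinant-comparison-diagram}) for commutativity. The only point the paper makes explicit that you leave implicit is that the diagram chase comparing $\nu_{E'/E}^\infty$ with $\nu_{E'/E}$ uses functoriality of the determinant in exact triangles, which is available here precisely because each $\cR[\cG_{E_n}]$ is a reduced ring.
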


\begin{proof}
    We first observe that the diagram (\ref{determinant-comparison-diagram}) implies that $\Theta_{T,\cK}^\infty$ indeed has image inside $\ES_r(T,\cK)$. The Proposition will therefore follow immediately by passing to the inverse limit over $E \in \Omega(\cK'/K)$ of the diagrams (\ref{determinant-comparison-diagram}) once it is demonstrated that the map $\rho_{T,\cK}$ is well-defined.
    
    To do this we fix a pair of fields $E \subseteq E'$ in $\Omega(\cK'/K)$. Since for each $n \in \NN$, $\cR[\cG_{E_n}]$ is a reduced ring the determinant functor over $\cR[\cG_{E_n}]$ is functorial in exact triangles. A straightforward diagram chase using this fact to compare the maps $\nu_{E'/E}^\infty$ and $\nu_{E'/E}$ then gives the commutativity of the diagram
    \begin{equation}
        \begin{tikzcd}[row sep=3em, column sep=3em]\label{determinant-comparison-lemma-3}
            \Det_{\bLambda_{E'}}(C_{E'_\infty, S(E')}(T)) \arrow[r, "\rho_{T,E'}"] \arrow[d, "\nu_{E'/E}^\infty"] &\displaystyle\varprojlim_{n \in \NN} \Det_{\cR[\cG_{E_n}]}(C_{E_n',S(E_n')}(T)) \arrow[d, "\varprojlim_{n}\nu_{E_n'/E_n}"]\\
            \Det_{\bLambda_E}(C_{E_\infty, S(E)}(T)) \arrow[r, "\rho_{T,E}"] &\displaystyle \varprojlim_{n \in \NN} \Det_{\cR[\cG_{E_n}]}(C_{E_n,S(E_n)}(T))
        \end{tikzcd}
    \end{equation}
    where $\nu_{E'/E}$ is the transition map used in the definition of $\VS(T,\cK)$. By passing to the limit over $E \in \Omega(\cK'/K)$ of these diagarams it then follows that $\rho_{T,\cK}$ maps $\VS_\Iw(T,\cK)$ isomorphically onto $\VS(T,\cK)$ as claimed.
\end{proof}

\begin{remark}
    To ease exposition in the sequel we shall continue to use the notation $\VS_\Iw(T,\cK)$ to indicate the (same) construction $\VS(T,\cK)$ viewed with $\Omega(\cK'/K)$ as its indexing set via the isomorphism $\rho_{T,\cK}$.
\end{remark}

\subsection{Reinterpreting the basic condition}

In this subsection we show that the question of whether an Euler system is basic admits an explicit reformulation in terms of its values. To this end we first fix an Euler system $\eta \in \ES_r(T,\cK)$ and a field $E \in \Omega(\cK'/K)$. By the assumed validity of the weak Leopoldt conjecture, the map $Q(\bLambda_E) \otimes_{\bLambda_E} \Theta_{T,E}^\infty$ is bijective. We may thus define
\begin{align*}
    \cL_{\eta,E} \in \Det_{Q(\bLambda_E)}(Q(\bLambda_E) \otimes_{\bLambda_E} C_{E_\infty, S(E)}(T))
\end{align*}
to be the inverse image under $Q(\bLambda_E) \otimes_{\bLambda_E} \Theta_{T,E}^\infty$ of the element
\begin{align*}
    (\eta_{E_n})_{n \in \NN} \in \varprojlim_{n} \bidual_{\cR[\cG_{E_n}]}^r H^1(\cO_{E,S(E)}, T)
\end{align*}

\begin{proposition}\label{reinterpret-iwasawa-conjecture-proposition}
    Let $\eta \in \ES_r(T,\cK)$ be an Euler system. Then $\eta$ is basic if and only if for every field $E \in \Omega(\cK'/K)$ one has
    \begin{align}
        \cL_{\eta,E} \in \Det_{\bLambda_E}(C_{E_\infty, S(E)}(T)) \hookrightarrow \Det_{Q(\bLambda_E)}(Q(\bLambda_E) \otimes_{\bLambda_E} C_{E_\infty, S(E)}(T))\label{equivariant-divisibility-condition}
    \end{align}
    where the above embedding is the natural one induced by base-changing to $Q(\bLambda_E)$.
\end{proposition}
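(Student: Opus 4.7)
The plan is to combine Lemma \ref{vertical-systems-reinterpretation-proposition} with the bijectivity of the maps $Q(\bLambda_E) \otimes_{\bLambda_E} \Theta^\infty_{T,E}$ in order to reduce the basic condition to an integrality condition at each level of $\Omega(\cK'/K)$.

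For the forward direction, I would first note that, via Lemma \ref{vertical-systems-reinterpretation-proposition}, the assertion that $\eta$ is basic is equivalent to the existence of some $z = (z_E)_E \in \VS_\Iw(T,\cK)$ with $\Theta^\infty_{T,E}(z_E) = (\eta_{E_n})_n$ for each $E$. Since $Q(\bLambda_E) \otimes_{\bLambda_E} \Theta^\infty_{T,E}$ is bijective, the preimage of $(\eta_{E_n})_n$ in the localised determinant is unique and must coincide with $\cL_{\eta,E}$ by definition. Thus $\cL_{\eta,E} = z_E \in \Det_{\bLambda_E}(C_{E_\infty, S(E)}(T))$ for every $E$, proving the forward implication.

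The converse is the substantive part. Given that each $\cL_{\eta,E}$ is integral, I must show that the collection $(\cL_{\eta,E})_E$ is automatically compatible with the transition maps $\nu^\infty_{E'/E}$, so that $(\cL_{\eta,E})_E$ in fact defines an element of $\VS_\Iw(T,\cK)$ which witnesses the basicity of $\eta$. The main ingredient will be the following identity of $Q(\bLambda_E)$-linear maps, for each pair $E \subseteq E'$ in $\Omega(\cK'/K)$:
\begin{equation*}
    \widetilde{\Theta}^\infty_{T,E} \circ \widetilde{\nu}^\infty_{E'/E}
    \;=\;
    \Bigl( \prod_{v \in S(E') \setminus S(E)} P_v(\Fr_v^{-1}) \Bigr)^{-1} \cdot \mathrm{cores}_{E'/E} \circ \widetilde{\Theta}^\infty_{T,E'},
\end{equation*}
where the tildes denote the natural $Q(\bLambda)$-linear extensions. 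This identity is encoded in the very construction of $\nu^\infty_{E'/E}$: the identity trivialisation of each $\Det_{\bLambda_E}(P_{E_\infty, v}(T))$, built from the resolution (\ref{P-E-infinity-resolution}), contributes precisely the determinant of $1 - \Fr_v^{-1}$, which on the bidual side amounts to multiplication by $P_v(\Fr_v^{-1})$. Equivalently, one can derive the identity from the known fact that $\Theta_{T,\cK}$ lands in $\ES_r(T,\cK)$ by \cite[Thm. 2.18]{bs} (which captures the analogous compatibility at each finite level) together with the identification of $\nu^\infty$ with the inverse limit of the maps $\nu$ provided by diagram (\ref{determinant-comparison-lemma-3}).

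Granted this commutative square, the converse follows quickly: applying $\widetilde{\Theta}^\infty_{T,E}$ to $\widetilde{\nu}^\infty_{E'/E}(\cL_{\eta,E'})$ and invoking the Euler system distribution relation $\mathrm{cores}_{E'/E}(\eta_{E'}) = \prod_v P_v(\Fr_v^{-1})\, \eta_E$ produces exactly $(\eta_{E_n})_n$, which is also equal to $\widetilde{\Theta}^\infty_{T,E}(\cL_{\eta,E})$ by construction. The injectivity of $\widetilde{\Theta}^\infty_{T,E}$ then forces $\nu^\infty_{E'/E}(\cL_{\eta,E'}) = \cL_{\eta,E}$, which is the required compatibility. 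The main obstacle in the whole argument is the verification of the displayed commutative square; this is essentially careful bookkeeping within the theory of determinant functors, keeping precise track of the trivialisations and base-change isomorphisms that appear in the definition of $\nu^\infty_{E'/E}$.
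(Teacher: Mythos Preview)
Your proposal is correct and follows essentially the same route as the paper. The forward direction is identical, and for the converse both arguments reduce to verifying $\nu^\infty_{E'/E}(\cL_{\eta,E'}) = \cL_{\eta,E}$ via injectivity of $\widetilde{\Theta}^\infty_{T,E}$ together with the Euler system distribution relation; the only cosmetic difference is that the paper splits $\nu^\infty_{E'/E}$ into its two constituents $\nu^{\infty,1}_{E'/E}$ and $\nu^{\infty,2}_{E'/E}$ and checks two separate commutative squares (one yielding $\cP_{E'/E}\cdot \cL_{\eta,E}$ from $\cL_{\eta,E'}$ via corestriction, and one removing the factor $\cP_{E'/E}$ via the acyclicity of $Q(\bLambda_E)\otimes P_{E_\infty,v}(T)$ and \cite[Lem.~1]{bf}), whereas you package these into the single composite identity displayed in your proposal.
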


\begin{proof}
    Fix $\eta \in \ES_r(T,\cK)$ and first suppose that $\eta \in \ES^b(T,\cK)$. Then there exists a vertical system $z = (z_{E_\infty})_E \in \VS_\Iw(T,\cK)$ that projects under $\Theta_{T,\cK}^\infty$ to give $\eta$. But for each $E \in \Omega(\cK'/K)$, $\cL_{\eta,E}$ is clearly the unique element that projects under $Q(\bLambda_E) \otimes_{\bLambda_E}\Theta_{T,E}^\infty$ to give $(\eta_{E_n})_E$. Hence for each $E \in \Omega(\cK'/K)$, the element $z_{E_\infty}$ must coincide with $\cL_{\eta,E}$.
    
    Conversely, we claim that the family $\cL_\eta := (\cL_{\eta,E})_E$ constitutes an element of $\VS_\Iw(T,\cK)$ and thus projects under $\Theta_{T,\cK}^\infty$ to give $\eta$. In other words, we claim that for every pair of fields $E \subseteq E'$ one has that $\nu^\infty_{E'/E}(\cL_{\eta,E'}) = \cL_{\eta,E}$. By the definition of the map $\nu_{E'/E}^\infty$ we are reduced to verifying the pair of equalities $\nu_{E'/E}^{\infty,1}(\cL_{\eta,E'}) = \cP_{E'/E}\cdot\cL_{\eta,E}$ and $\nu_{E'/E}^{\infty,2}(\cP_{E'/E}\cdot\cL_{\eta,E}) = \cL_{\eta,E}$.
    
    To prove the first equality we observe that we have a commutative diagram:
    \begin{center}
        \begin{tikzcd}[row sep = 3em, column sep = 3em]
            \Det_{\bLambda_{E'}}(C_{E'_\infty, S(E')}(T)) \arrow[r, "\Theta_{T,E'}^\infty"] \arrow[d, "\nu_{E'/E}^{\infty,1}"] &\displaystyle\varprojlim_{n \in \NN} \bidual_{\cR[\cG_{E'_n}]}^r H^1(\cO_{E',S(E')}, T) \arrow[d, "N_{E'/E}^\infty"]\\
            \displaystyle \Det_{\bLambda_{E}}(C_{E_\infty, S(E')}(T)) \arrow[r, "\Theta_{T,E}^\infty"] &\displaystyle \varprojlim_{n \in \NN} \bidual_{\cR[\cG_{E_n}]}^r H^1(\cO_{E,S(E')}, T)
        \end{tikzcd}
    \end{center}    
    where $N_{E'/E}^\infty := \varprojlim_{n \in \NN} N_{E'_n/E_n}$. As such we may calculate
    \begin{align*}
        (\Theta_{T,E}^\infty \circ \nu_{E'/E}^{\infty,1})(\cL_{\eta, E'}) &= (N_{E'/E}^\infty \circ \Theta_{T,E'}^\infty)(\cL_{\eta,E'})\\
        &= N_{E'/E}^\infty((\eta_{E'_n})_n)\\
        &= \cP_{E'/E} \cdot (\eta_{E_n})_n\\
        &= \Theta_{T,E}^\infty(\cP_{E'/E} \cdot \cL_{\eta,E})
    \end{align*}
    whence the injectivity of $\Theta_{T,E}^\infty$ yields the desired equality.
    
    As for the second equality, we first observe that for each $v \in S(E')\setminus S(E)$ the complex
    \begin{align*}
        Q(\bLambda_E) \otimes_{\bLambda_E} \bigoplus_{v \in S(E')\setminus S(E)} P_{E_\infty, v}(T)
    \end{align*}
    is acyclic since, for any place $w$ of $E$ lying over $v$, $1-\Fr_w^{-1}$ is a non-zero-divisor in the algebra $\Lambda$.
    
    One then deduces the existence (see, for example, \cite[Lem. 1]{bf}) of a commutative diagram
    \begin{center}
        \begin{tikzcd}
            \displaystyle\Det_{Q(\bLambda_E)}\left(Q(\bLambda_E) \otimes_{\bLambda_E} \bigoplus_{v \in S(E')\backslash S(E)} P_{E_\infty, v}(T)\right) \arrow[r] \arrow[d] &Q(\bLambda_E) \arrow[d, "\cdot\mathcal{P}_{E'/E}^{-1}"]\\
            \Det_{Q(\bLambda_E)}(0) \otimes_{Q(\bLambda_E)} \Det_{Q(\bLambda_E)}^{-1}(0) \arrow[r] &Q(\bLambda_E)
        \end{tikzcd}
    \end{center}
    where $\cP_{E'/E} := \prod_{v \in S(E')\backslash S(E)} P_v(\Fr_v^{-1})$, the top and bottom maps are the relevant evaluation maps and the left-hand vertical map is the canonical passage-to-cohomology map. This diagram then gives rise to the further diagram (in which we abbreviate the functor $\otimes_\Lambda$ to just $\cdot$ and slightly abuse notation by using $\Theta_{T,E}^\infty$ to represent the projection map for both sets $S(E)$ and $S(E')$):
    \begin{center}
        \begin{tikzcd}[row sep = 3em, column sep = 3em]
            \displaystyle Q(\Lambda)\cdot\Det_{\bLambda_{E}}(C_{E_\infty, S(E')}(T)) \arrow[r, "\Theta_{T,E}^\infty"] \arrow[d, "\nu_{E'/E}^{\infty,2}"] &\displaystyle Q(\Lambda) \cdot \varprojlim_{n \in \NN} \bidual_{\cR[\cG_{E_n}]}^r H^1(\cO_{E,S(E')}, T) \arrow[d, "\cP_{E'/E}^{-1}"]\\
            \displaystyle Q(\Lambda)\cdot\Det_{\bLambda_{E}}(C_{E_\infty, S(E)}(T)) \arrow[r, "\Theta_{T,E}^\infty"] &\displaystyle Q(\Lambda) \cdot \varprojlim_{n \in \NN} \bidual_{\cR[\cG_{E_n}]}^r H^1(\cO_{E,S(E)}, T)
        \end{tikzcd}
    \end{center}    
    A similar calculation as before using this second diagram then shows that $\nu_{E'/E}^{\infty,2}(\cP_{E'/E}\cdot\cL_{\eta,E}) = \cL_{\eta,E}$ as claimed.
\end{proof}

\begin{remark}\label{eimc-remark}
    Proposition \ref{reinterpret-iwasawa-conjecture-proposition} is reminiscent of the equivariant Iwasawa Main Conjectures (eIMC) that is formulated by Burns, Kurihara and Sano in \cite[Conj. 3.1]{bks2} (and see also \cite[Rem. 3.3]{bks2}) which originate with the work of Kato in \cite{kato1} and \cite[\S 3.3.8]{kato2}. Indeed, if we specialise to the case $K = \mathbb{Q}, T = \ZZ_p(1)$ and $\eta$ the cyclotomic Euler system then the eIMC is the statement that, for every $E \in \Omega(\cK'/K)$, not only $\cL_{\eta,E} \in \Det_{\bLambda_E}(C_{E_\infty, S(E)}(T))$ but that it constitutes a $\bLambda_E$-basis for this module.
    
    In the next section we push this parallel further to see that the condition in the Proposition is effectively an equivariant version of the Iwasawa-theoretic divisibilities that Euler systems are expected to satisfy. As such we believe Question \ref{main-question} to be at the heart of the theory of Euler systems.
\end{remark}

\subsection{Reformulating the condition of Proposition \ref{reinterpret-iwasawa-conjecture-proposition}}

We now aim to reduce the condition of Proposition \ref{reinterpret-iwasawa-conjecture-proposition} to an explicit statement about non-equivariant Iwasawa-theoretic divisibilities and the vanishing of appropriate $\mu$-invariants.

\subsubsection{Equivariant Iwasawa Algebras}
We first review some important algebraic definitions and facts concerning equivariant Iwasawa algebras. The proofs of these results are essentially well-known, especially in the case that $\cR$ is a group ring, but for lack of a suitable reference for general orders $\cR$ we provide full proofs here.

Suppose to be given a field $E \in \Omega(\cK'/K)$. The ring $\cR[\cG_E]$ is profinite as it is a finitely generated $R$-algebra. In particular, \cite[Ex. 5.1.3(2)]{ribes-zalesskii} implies that $\cR[\cG_E]$ decomposes as a product $\cR[\cG_E] = \prod_{\chi \in I_E} \cR_\chi$ where each $\cR_\chi$ is a local ring. Note that, since a local ring has connected spectrum, the set $\set{e_\chi}_{\chi \in I_E}$ of idempotents of $\cR[\cG_E]$ inducing this decomposition constitutes a complete set of primitive orthogonal idempotents.

Similarly, we write $\set{e_\psi}_{\psi \in \overline{I_E}}$ for a complete set of primitive orthogonal idempotents of the semi-simple algebra $\cB[\cG_E]$ so that $\cB[\cG_E] = \prod_{\psi \in \overline{I_E}} \cB_\psi$ where each $\cB_\psi$ is a finite extension of $Q$.

We observe that if we are given $\chi \in I_E$, then there exists a decomposition $e_\chi = \sum_{i=1}^{d_\chi} e_{\psi(\chi)_i} \in \cB[\cG_E]$ for some $d_\chi \in \NN$ and $\psi(\chi)_i \in \overline{I_E}$; in this case we say that any such $\psi(\chi)$ is \textit{associated} to $\chi$. For any such $\psi(\chi)$ the image of $\cR_\chi$ inside $\cB_\psi$ is an $R$-order which is a complete regular local ring (since it is integral over $R$) which we denote by $\cR_\psi'$.

In particular for all $\chi \in I_E$ we have a decomposition
\begin{align}
	\cR_{\chi}[1/\pi] = \bigoplus_{i=1}^{d_\chi} \cR'_{\psi(\chi)_i}[1/\pi]\label{equivariant-integers-decomposition}
\end{align}
where $\pi$ is a generator of the maximal ideal of $R$. 

To give an example of these constructions we let $H$ be a finite group and set $\cR = R[H]$ with $R$ a finite extension of $\ZZ_p$. Then we have $\cB = Q[H]$ and we may take the $\set{e_\chi}_{\chi \in I_E}$ to be the canonical idempotents associated to each character class $\chi \in \widehat{\cG_E \times H}/\sim_{G_Q}$ admitting a representative of prime-to-$p$ order where $\sim_{Q}$ is the equivalence relation defined by
\begin{align*}
    \chi \sim \chi' \iff \text{there exists } \sigma \in G_{Q} \text{ such that } \chi = \sigma \circ \chi'
\end{align*} 
Similarly, the set $\set{e_\chi}_{\chi \in\overline{I_E}}$ consists of the canonical idempotents associated to each character class $\chi \in \widehat{\cG_E \times H}/\sim_{G_Q}$. Motivated by this example, we shall refer to elements of $I_E$ and $\overline{I_E}$ as `character classes'.

We next remark that $\bLambda_E$ decomposes as
\begin{align}
	\bLambda_E = \bigoplus_{\chi \in I_E} \cR_\chi[[\Gamma]]\label{equivariant-algebra-decomposition}
\end{align}
where we recall that $\Gamma = \Gal(K_\infty/K)$. In particular, if each $\cR_\chi$ is a finite integral extension of $R$ then $\bLambda_E$ is a product of classical Iwasawa algebras and, therefore, a regular ring. In general, $\bLambda_E$ is more complicated but still satisfies sufficiently good ring-theoretic properties. Indeed, it is easily seen from the decomposition (\ref{equivariant-algebra-decomposition}) that $\bLambda_E$ is Gorenstein.

In this setting an important distinction now arises: we say that a prime $\fp \in \Spec(\bLambda_E)$ of height one is \textit{regular} if $p \not\in \fp$ and \textit{singular} otherwise.

Observe that the decompositions (\ref{equivariant-integers-decomposition}) and (\ref{equivariant-algebra-decomposition}) combine to give a decomposition
\begin{align*}
	\bLambda_E\left[\frac{1}{\pi}\right] \cong \bigoplus_{\psi \in \overline{I_E}} \bLambda_{E,\psi}\left[\frac{1}{\pi}\right]
\end{align*}
where $\bLambda_{E,\psi} := \cR_\psi'[[\Gamma]]$. In particular, if $\mathfrak{p}$ is a regular prime of $\bLambda_E$ then there exists a character class $\psi = \psi_\fp \in \overline{I_E}$ such that $\bLambda_{E,\fp}$ coincides with a localisation of $\bLambda_{E,\psi}$ at $\mathfrak{p}\Lambda_\psi$. As such, the localisation $\bLambda_{E,\fp}$ is a discrete valuation ring.

Now suppose that $\fp$ is a singular prime of $\bLambda_E$. Then by the decomposition (\ref{equivariant-algebra-decomposition}) there exists a character class $\chi =\chi_\fp \in I_E$ such that $\bLambda_\fp$ coincides with a localisation of $\bLambda_{E,\chi} := \cR_\chi[[\Gamma]]$ at $\fp\Lambda_\chi$. The following Lemma then shows that the singular primes of $\bLambda_E$ are in one-to-one correspondence with the character classes in $\overline{I_E}$:

\begin{lemma}\label{singular-prime-classification-lemma}
	Let $\chi \in I_E$ be a character class. Then the unique height one prime ideal of $\bLambda_{E,\chi} = \cR_\chi[[\Gamma]]$ containing $p$ is $\sqrt{p\cR_\chi}\bLambda_{E,\chi}$.
\end{lemma}

\begin{proof}
	We first show that the ideal $\sqrt{p\cR_\chi}\bLambda_{E,\chi}$ is indeed a prime ideal. Note that the quotient $\bLambda_{E,\chi}/\sqrt{p\cR_\chi}\bLambda_{E,\chi}$ identifies with $(\cR_\chi/\sqrt{p\cR_\chi})[[\Gamma]]$ and so it suffices to show that this latter ring is a domain.

	To do this we note that $\sqrt{p\cR_\chi}$ coincides with the inverse image of the nilradical of the ring $\cR_\chi/p\cR_\chi$ under the natural projection $\cR_\chi \to \cR_\chi/p\cR_\chi$. Since $p$ is not a unit in $\cR$ the ring $\cR_{\chi}/p\cR_\chi$ is a non-zero local ring of Krull dimension zero and so its nilradical coincides with its unique maximal ideal. From this it follows that $\cR_\chi/\sqrt{p\cR_\chi}$ is a non-zero field of characteristic $p$. This immediately implies that $\cR_\chi/\sqrt{p\cR_\chi}[[\Gamma]]$ is a domain as claimed.

	By definition, $\sqrt{p\cR_\chi}\bLambda_{E,\chi}$ is minimal amongst those primes of $\bLambda_{E,\chi}$ containing the principal ideal $p\Lambda_\chi$. We may then appeal to Krull's principal ideal theorem to establish that $\sqrt{p\cR_\chi}\bLambda_{E,\chi}$ has height one as is required to complete the proof of the Lemma. 
\end{proof}

The above discussion implies that the localisation of $\bLambda_E$ at a singular prime does not enjoy as nice a structure as in the regular case and it is therefore quite cumbersome to study the structure of its category of modules. With this being said, one can circumvent this issue by restricting oneself to $\bLambda_E$-modules for which certain $\mu$-invariants vanish as the following Lemma shows. To state it we will write $\mu_A(M)$ for the $\mu$-invariant of the Iwasawa module $M$ over a given (non-equivariant) Iwasawa algebra $A$.

\begin{lemma}\label{singular-primes-vanishing-lemma}
	Let $M$ be a finitely generated $\bLambda_E$-module which is torsion as a $\Lambda$-module, $\mathfrak{p}$ a singular prime with associated character class $\chi = \chi_\mathfrak{p} \in I_E$. Then the following are equivalent:
	\begin{enumerate}
		\item{The $\mu_\Lambda$-invariant of $e_\chi M$ vanishes,}
		\item{For any character class $\psi = \psi(\chi) \in \overline{I_E}$ associated to $\chi$ the $\mu_{\bLambda_{E,\psi}}$-invariant of
			\begin{align*}
				M_\psi := M \otimes_{\bLambda_E} \bLambda_{E,\psi} \cong e_\chi M \otimes_{\bLambda_E} \bLambda_{E,\psi}
			\end{align*}
			vanishes,
		}
		\item{The $\bLambda_\mathfrak{p}$-module $M_\mathfrak{p}$ vanishes.}
	\end{enumerate} 
\end{lemma}

\begin{proof}
	It is an immediate consequence of the structure theorem for Iwaswa modules that $\mu_{\Lambda}(e_\chi(M))$ (resp. $\mu_{\bLambda_{E,\psi}}(M_\psi)$) vanishes if and only if $(e_\chi M)_{(p)} = 0$ (resp. $M_{\psi,(\pi)}$ with $\pi$ a generator of the maximal ideal of $\cR_\psi'$).

	We now claim that the natural homomorphism
	\begin{align}
		\bLambda_{E,\chi} \otimes_\Lambda \Lambda_{(p)} \cong \bLambda_{E,\chi, \mathfrak{p}}\label{iwasawa-algebra-isomorphic-localisations}
	\end{align}
	of rings is an isomorphism. To do this we fix a topological generator $\gamma$ of $\Gamma$. Then by the argument used to prove Lemma \ref{singular-prime-classification-lemma} the ideal $(\mathfrak{p},\gamma-1)\bLambda_{E,\chi}$ is the unique maximal ideal of the local ring $\bLambda_{E,\chi}$. Observe, furthermore, that $\gamma-1$ is in fact an element of $\Lambda$, not divisible by $p$, and is therefore invertible in $\bLambda_{E,\chi} \otimes_\Lambda \Lambda_{(p)}$. Since the ring $\bLambda_{E,\chi}$ has Krull dimension two it then follows that $\bLambda_{E,\chi} \otimes_\Lambda \Lambda_{(p)}$ has Krull dimension one. A straightforward calculation then shows that the spectrum of $\bLambda_{E,\chi} \otimes_\Lambda \Lambda_{(p)}$ coincides with the subset of $\Spec(\bLambda_{E,\chi})$ consisting of those prime ideals of height one containing $p$. The isomorphism (\ref{iwasawa-algebra-isomorphic-localisations}) is now an immediate consequence of the fact that the only such prime is $\mathfrak{p}$.

	The equivalence between (1) and (3) now follows by tensoring the isomorphism (\ref{iwasawa-algebra-isomorphic-localisations}) over $\bLambda$ with $M$.

	We now note that the map $\cR_\chi \to \cR_\psi'$ is surjective by definition and so the induced map on residue fields $\cR_\chi/\sqrt{p\cR_\chi} \to \cR_\psi'/\pi_\psi\cR_\psi'$ is an isomorphism where $\pi_\psi$ is a generator of the maximal ideal of $\cR_\psi'$. The equivalence between (2) and (3) is now follows by combining this isomorphism with Nakayama's Lemma.
\end{proof}

The following Lemma will allows us to analyse the condition of Proposition \ref{reinterpret-iwasawa-conjecture-proposition} after localising at height one primes. We remark that $\bLambda_E$ is a Cohen-Macaulay ring by virtue of being Gorenstein.

\begin{lemma}\label{height-one-reduction-lemma}
    Let $R$ be a Cohen-Macaulay ring with total ring of quotients $Q$ and suppose $R$ admits a decomposition as a product of local rings. Assume to be given an invertible $Q$-module $M$, a cyclic $R$-submodule $I$ of $M$, and an invertible $R$-submodule $J$ of $M$. Then $I \subseteq J$ if and only if for every height one prime $\fp$ of $R$ one has $I_\fp \subseteq J_\fp$ inside $M_\fp$.
\end{lemma}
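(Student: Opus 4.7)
The forward direction is immediate from the exactness of localisation, so the plan is to prove the converse. My approach decomposes into three natural steps.

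First, I would reduce to the case where $R$ is a local ring. Since by hypothesis $R$ is a finite product of local rings, every ingredient (namely $Q$, $M$, $I$, $J$, and the set of height one primes) decomposes as a product indexed over the local factors, and the inclusion $I \subseteq J$ holds if and only if it holds factor-by-factor. We may henceforth assume $R$ is a local Cohen-Macaulay ring.

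Second, I would reduce to the case $M = Q$ and $J = R$: over a local ring, the invertible module $J$ is free of rank one, so pick a generator $j$. Since $J$ is flat (being invertible), the base-change $Q \otimes_R J \to M$ is injective, and its image is a $Q$-invertible submodule of the $Q$-invertible module $M$, hence equals $M$. Thus $j$ is a $Q$-basis of $M$ and we may identify $M$ with $Q$ in such a way that $J$ corresponds to $R$ and $I$ corresponds to $Ra$ for some $a \in Q$. The statement reduces to showing: $a \in R$ if and only if $a \in R_\fp$ for every height one prime $\fp$ of $R$.

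Finally, the key point is to show that every associated prime of the $R$-module $Q/R$ has height at most one, for then the element $\bar{a} := a + R$ vanishes if and only if it vanishes in every localisation at a height one prime, which is precisely the hypothesis. A height zero prime $\fp$ is not associated because the non-zero-divisors of $R$ (whose inverses generate $Q$) all lie outside $\fp$, forcing $(Q/R)_\fp = 0$. For primes of height $\geq 2$, one observes that the flatness of $Q$ over $R$ combined with the fact that $\fm$ contains a non-zero-divisor of $R$ (which acts invertibly on $Q$) yields $\mathrm{Ext}^i_R(R/\fm, Q) = 0$ for all $i$; the long exact sequence attached to $0 \to R \to Q \to Q/R \to 0$ then gives shift isomorphisms $\mathrm{Ext}^i_{R_\fp}(R_\fp/\fp R_\fp, (Q/R)_\fp) \cong \mathrm{Ext}^{i+1}_{R_\fp}(R_\fp/\fp R_\fp, R_\fp)$, and the Cohen-Macaulay hypothesis forces the right-hand side to vanish for $i = 0$ whenever $\mathrm{ht}(\fp) \geq 2$. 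I expect this last depth computation to be the main technical step, though it is essentially the standard fact that Cohen-Macaulay rings satisfy Serre's condition $S_2$ and hence coincide with the intersection of their localisations at height one primes inside the total ring of quotients.
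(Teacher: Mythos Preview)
Your proposal is correct and follows essentially the same route as the paper, which simply defers to \cite[Lem.~5.3]{flach} with the remark that one should use an $R$-generator of the cyclic module $I$ in place of a basis; your three-step argument spells out precisely this adaptation together with the underlying $S_2$/depth computation that Flach's lemma relies on.
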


\begin{proof}
    This is shown in the same way as \cite[Lem. 5.3]{flach} by using an $R$-generator of $I$ instead of a basis. 
\end{proof}

We end this section by recalling the following useful result on Fitting ideals of $\bLambda_E$-modules:

\begin{theorem}[Greither-Kurihara]\label{greither-kurihara-theorem}
	Fix a field $E \in \Omega(\cK'/K)$ and let $(M_n)_{n \in \NN}$ be an inverse system of $\cR[\cG_{E_n}]$-modules with surjective transition morphisms. If the limit $M_\infty := \varprojlim_{n \in \NN} M_n$ is torsion as a $\Lambda$-module then the natural projection maps $\cR[\cG_{E_m}] \to \cR[\cG_{E_n}]$ induce an isomorphism of Fitting ideals
	\begin{align*}
		\Fitt_\bLambda^0(M_\infty) \cong \varprojlim_{n \in \NN} \Fitt_{\cR[\cG_{E_n}]}^0(M_n)
	\end{align*}
\end{theorem}

\begin{proof}
	This follows via the same argument used to prove \cite[Thm. 2.1]{gk}. The statement of \textit{loc. cit.} is for the case $\cR = R[G]$ where $G$ is a finite abelian group. However, the only point at which this structure is relevant in the argument is in step (5) in which a non-zero-divisor annihilator $f \in \Lambda$ of $M_\infty$ is fixed and the claim that $\bLambda_E/f\bLambda_E$ is a semi-local ring of dimension one is asserted. Since it is clear that this condition is also satisfied under the slightly more general hypotheses levied on $\cR$ herein, the argument of \textit{loc. cit.} carries through unchanged.
\end{proof}

\subsubsection{Iwasawa-theoretic divisibilities}

In this section we expand upon Remark \ref{eimc-remark} by giving the precise statement and proof of Theorem \ref{basic-criterion-theorem}. In particular, we shall show that the question of whether an Euler system is basic is equivalent (modulo vanishing of particular $\mu$-invariants) to a collection of classical Iwasawa-theoretic divisibilities.

To do this we first record the following Lemma:

\begin{lemma}\label{ES-torsion-lemma}
    Fix an Euler system $\eta \in \ES_{r}(T,\cK)$, a field $E \in \Omega(\cK'/K)$, and a regular prime $\fp$ of $\bLambda_E$. If $\braket{(\eta_{E_n})_n}_{\bLambda_{E,\fp}}$ is non-trivial then
    \begin{align*}
        \frac{\displaystyle\Bigl({\textstyle\varprojlim_{n}} \bidual_{\cR[\cG_{E_n}]}^{r} H^1(\cO_{E_n, S(E_n)}, T)\Bigr)_\fp}{\displaystyle\braket{(\eta_{E_n})_n}_{\bLambda_{E,\fp}}}
    \end{align*}
    is a torsion $\bLambda_{E,\fp}$-module.
\end{lemma}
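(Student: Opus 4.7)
The plan is to reduce the statement to the elementary fact that, over a discrete valuation ring, a non-zero element of a free module of rank one generates a submodule with torsion quotient. Concretely, I will show that the localization
\[
M_\fp := \left(\varprojlim_{n \in \NN} \bidual_{\cR[\cG_{E_n}]}^r H^1(\cO_{E_n, S(E_n)}, T^*(1))\right)_\fp
\]
is a free $\bLambda_{E,\fp}$-module of rank one. Given this, the non-vanishing hypothesis on $\braket{(\eta_{E_n})_n}_{\bLambda_{E,\fp}}$ says exactly that $(\eta_{E_n})_n$ corresponds to a non-zero element $a \in \bLambda_{E,\fp}$ under any such identification; since $\bLambda_{E,\fp}$ is a domain, the quotient by the principal ideal $(a)$ is then automatically torsion.

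The argument that $M_\fp$ is free of rank one proceeds in two steps. First, by Lemma \ref{bidual-compatibility-lemma}(1) we have
\[
M \cong \bidual_{\bLambda_E}^r H^1_\Iw(\cO_{E,S(E)}, T^*(1)).
\]
Because $C_{E_\infty, S(E)}(T)$ is perfect (as recorded via the results of \cite{bd} in the Iwasawa-theoretic setup), the Iwasawa cohomology module $H^1_\Iw(\cO_{E,S(E)}, T^*(1))$ is finitely generated over $\bLambda_E$. Localization therefore commutes with the Hom-functors defining the bidual, so we may rewrite
\[
M_\fp \cong \bidual_{\bLambda_{E,\fp}}^r H^1_\Iw(\cO_{E,S(E)}, T^*(1))_\fp.
\]
Since $\fp$ is regular, $\bLambda_{E,\fp}$ is a DVR, so every finitely generated module decomposes as free $\oplus$ torsion; hence for any finitely generated module $X$ over a DVR, $X^*$ is free, and iterating, $\bidual^r X$ is free as well. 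In particular, $M_\fp$ is a free $\bLambda_{E,\fp}$-module.

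Second, to compute its rank, I exploit the map $\Theta_{T,E}^\infty \colon L \to M$ with $L := \Det_{\bLambda_E}(C_{E_\infty, S(E)}(T))$. By definition, $\Theta_{T,E}^\infty$ becomes an isomorphism after base change to $Q(\bLambda_E)$. Since $L$ is an invertible $\bLambda_E$-module, $L_\fp$ is free of rank one over $\bLambda_{E,\fp}$; base-changing the isomorphism along $\bLambda_{E,\fp} \hookrightarrow Q(\bLambda_{E,\fp})$ gives an identification of one-dimensional $Q(\bLambda_{E,\fp})$-vector spaces $L_\fp \otimes Q(\bLambda_{E,\fp}) \xrightarrow{\sim} M_\fp \otimes Q(\bLambda_{E,\fp})$. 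Combining with freeness, $M_\fp$ has rank one, so $M_\fp \cong \bLambda_{E,\fp}$, completing the reduction.

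The only subtlety, rather than a serious obstacle, is the compatibility of the identification in Lemma \ref{bidual-compatibility-lemma}(1) with localization at height one primes; this ultimately hinges on the finite generation of $H^1_\Iw(\cO_{E,S(E)}, T^*(1))$ over the Noetherian ring $\bLambda_E$, which in turn rests on the perfectness of $C_{E_\infty, S(E)}(T)$. Once this compatibility is recorded, the DVR structure of $\bLambda_{E,\fp}$ does all the rest of the work, and no hypotheses beyond regularity of $\fp$ and non-triviality of $\braket{(\eta_{E_n})_n}_{\bLambda_{E,\fp}}$ are required.
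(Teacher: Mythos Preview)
Your argument is correct and takes a genuinely different, more streamlined route than the paper. You localize at $\fp$ immediately and exploit the DVR structure of $\bLambda_{E,\fp}$ to show directly that $M_\fp$ is free of rank one, after which the conclusion is elementary. The paper instead works globally over $\bLambda_E$: it proves that $\varprojlim_n \bidual^r H^1$ is isomorphic to the fractional ideal $\Fitt^0_{\bLambda_E}(H^2_\Iw)^{-1}$ via an explicit computation with the quadratic standard representative $(P,\psi,\{b_i\})$ of $C_{E_\infty,S(E)}(T)$, and separately checks that this Fitting ideal contains a non-zero-divisor. The torsion statement then follows by embedding the quotient into $x^{-1}\bLambda_{E,\fp}/(\alpha((\eta_{E_n})_n)\cdot x^{-1}\bLambda_{E,\fp})$ for a suitable non-zero-divisor $x$.

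What each approach buys: yours is shorter and uses nothing beyond finite generation of $H^1_\Iw$, the fact that $Q(\bLambda_E)\otimes\Theta_{T,E}^\infty$ is bijective, and elementary structure theory over DVRs; it gives exactly what the lemma asks for and no more. The paper's approach yields the additional global identification $\bidual_{\bLambda_E}^r H^1_\Iw \cong \Fitt^0_{\bLambda_E}(H^2_\Iw)^{-1}$, which is of independent interest even though it is not invoked explicitly elsewhere in the paper. In the subsequent proof of Theorem~\ref{explicit-conditions-for-conj-theorem}, only the torsion conclusion and (implicitly) the freeness of rank one at $\fp$ are used, and both are supplied by your argument.
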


\begin{proof}
    At the outset we fix a quadratic standard representative $[P \xrightarrow{\psi} P]$ of the complex $C_{E_\infty, S(E)}(T)$, the existence of which is guaranteed by Lemma \ref{bidual-compatibility-lemma}. 
    
	We next recall that $\bLambda_{E,\fp}$ is a discrete valuation ring and, therefore, a principal ideal domain. The embedding $H^1_\Iw(\cO_{E,S(E)},T) \subseteq P$ then implies that $H^1_\Iw(\cO_{E,S(E)}, T)_\fp$ is a free $\bLambda_{E,\fp}$-module, being a submodule of a finite free module. A straightforward analysis of the Yoneda 2-extension associated to the fixed standard representative of the complex $C_{E_\infty, S(E)}(T)$ then implies that the $\bLambda_{E,\fp}$-rank of this module is precisely $r$.

	In particular one sees that
	\begin{align*}
		\left(\bidual_{\bLambda_E}^{r} H^1_\Iw(\cO_{E,S(E)}, T)\right)_\fp \cong \bidual_{\bLambda_{E,\fp}}^{r} H^1_{\Iw}(\cO_{E,S(E)}, T)_\fp \cong \exprod_{\bLambda_{E,\fp}}^{r} H^1_\Iw(\cO_{E,S(E)}, T)_\fp
	\end{align*}
	is $\bLambda_{E,\fp}$-free of rank one.

	As such the assumption that the module $\braket{(\eta_{E_n})_n}_{\bLambda_{E,\fp}}$ be non-trivial means that it is also free of rank one and, therefore, that the quotient of the Lemma is $\bLambda_{E,\fp}$-torsion.
\end{proof}

\begin{remark}
	Let $E \in \Omega(\cK'/K)$ be a field and $\psi \in \overline{I_E}$ a character class. Then by fixing any regular height one prime $\fp$ of $\bLambda_{E,\psi}$ and regarding it as a prime of $\bLambda_{E}$ one may immediately deduce from Lemma \ref{ES-torsion-lemma} the following.

	Let $\eta \in \ES_{r}(T,\cK)$ be an Euler system. If there is some $n \in \NN$ such that $\eta_{E_n}^\psi$ is non-trivial then
    \begin{align*}
        \frac{\displaystyle\Bigl({\textstyle\varprojlim_{n}} \bidual_{\cR[\cG_{E_n}]}^{r_T} H^1(\cO_{E_n, S(E_n)}, T)\Bigr)^\psi}{\displaystyle\braket{(\eta_{E_n})_n^\psi}_{\bLambda_{E,\psi}}}
    \end{align*}
    is a torsion $\bLambda_{E,\psi}$-module where we write $(-)^\psi$ for the functor $- \otimes_{\bLambda_E} \bLambda_{E,\psi}$.
\end{remark}

\begin{theorem}\label{explicit-conditions-for-conj-theorem}
    Suppose to be given an Euler system $\eta \in \ES_r(T,\cK)$. Assume that for each field $E \in \Omega(\cK'/K)$ and every character class $\chi \in I_E$ one has that the $\mu_\Lambda$-invariant of
    \begin{align}
       e_\chi \cdot H^2_\Iw(\cO_{E,S(E)}, T)\label{mu-invariant-condition}
    \end{align}
    vanishes. Then the following are equivalent
    \begin{enumerate}
        \item{$\eta \in \ES^b(T,\cK)$;}
        \item{For every field $E \in \Omega(\cK'/K)$ and regular prime $\fp$ of $\bLambda_E$ such that $\braket{(\eta_{E_n})_n}_{\bLambda_{E,\fp}}$ is non-trivial there is an inclusion
        \begin{align}\label{theorem-inclusion-3}
            \Fitt_{\bLambda_E}^0\left(\frac{\displaystyle\varprojlim_{n \in \NN} \bidual_{\cR[\cG_{E_n}]}^r H^1(\cO_{E_n, S(E_n)}, T)}{\displaystyle\braket{(\eta_{E_n})_n}_{\bLambda_E}}\right)_\fp \subseteq \Fitt^0_{\bLambda_E}\left(H^2_\Iw(\cO_{E, S(E)}, T)\right)_\fp
        \end{align}
        }
        \item{For every field $E \in \Omega(\cK'/K)$ and character class $\psi \in \overline{I_E}$ for which there exists an $n \in \NN$ such that $e_\psi\cdot\eta_{E_n}$ is non-trivial one has that
        \begin{align}
            \begin{array}{c}
                \displaystyle\mathrm{char}_{\bLambda_{E,\psi}}\left(H^2_\Iw(\cO_{E,S(E)}, T)^\psi\right) \\\\
                divides\\\\
                \mathrm{char}_{\bLambda_{E,\psi}}\left(\frac{\displaystyle\left(\varprojlim_{n \in \NN} \bidual_{\cR[\cG_{E_n}]}^r H^1(\cO_{E_n, S(E_n)}, T)\right)^\psi}{\displaystyle\braket{(\eta_{E_n})_n}^\chi_{\bLambda_{E,\psi}}}\right)
            \end{array}\label{iwasawa-theoretic-divisibilities}
        \end{align}
        where we write $(-)^\psi$ for the functor $- \otimes_{\cR[\cG_E]} \cR_\psi = - \otimes_{\bLambda_E} \bLambda_{E,\psi}$.}
    \end{enumerate}
    Moreover, suppose that for each field $E \in \Omega(\cK'/K)$ and every character class $\chi \in I_E$ one has that the $\mu_\Lambda$-invariant of
    \begin{align*}
        \frac{\displaystyle\left(\varprojlim_{n \in \NN} \bidual_{\cR[\cG_{E_n}]}^r H^1(\cO_{E_n, S(E_n)}, T)\right)^\chi}{\displaystyle\braket{(\eta_{E_n})_n}^\chi_{\bLambda_{E}}}
    \end{align*}
    vanishes. Then $\eta$ is an $\cR[[\Gal(\cK/K)]]$-basis of $\ES^b(T,\cK)$ if and only if equality holds in either (and thus both) (2) and (3).
\end{theorem}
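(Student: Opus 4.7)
The plan is to apply Proposition \ref{reinterpret-iwasawa-conjecture-proposition} to rephrase (1) as the integrality condition $\cL_{\eta,E} \in \Det_{\bLambda_E}(C_{E_\infty, S(E)}(T))$ for every $E \in \Omega(\cK'/K)$, and then to localise this condition at each height-one prime of $\bLambda_E$ in order to extract the explicit Fitting and characteristic ideal statements of (2) and (3). The main technical device is the isomorphism
\[
\alpha: \bidual_{\bLambda_E}^r H^1_\Iw(\cO_{E,S(E)}, T^*(1)) \xrightarrow{\sim} \Fitt^0_{\bLambda_E}(H^2_\Iw(\cO_{E,S(E)}, T^*(1)))^{-1}
\]
constructed in the proof of Lemma \ref{ES-torsion-lemma}. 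Unwinding the explicit formulas there, the composite $\alpha \circ \Theta^\infty_{T,E}$ coincides with the canonical inclusion $\bLambda_E \hookrightarrow \Fitt^0(H^2_\Iw)^{-1}$: a $\bLambda_E$-basis of the free rank-one module $\Det_{\bLambda_E}(C_{E_\infty, S(E)}(T))$ is sent by $\Theta^\infty_{T,E}$ to the element $y$ from that proof, which then maps under $\alpha$ to $1$. Writing $a_E := \alpha((\eta_{E_n})_n) \in \Fitt^0(H^2_\Iw)^{-1} \subseteq Q(\bLambda_E)$, condition (1) is therefore equivalent to $a_E \in \bLambda_E$ for every $E$.

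Since $\bLambda_E$ is Cohen-Macaulay, Lemma \ref{height-one-reduction-lemma} reduces this integrality to a check at each height-one prime $\fp$ of $\bLambda_E$. At any singular $\fp$, the $\mu$-invariant hypothesis combines with Lemma \ref{singular-prime-lemma} to force $H^2_\Iw|_\fp = 0$, so that $\Fitt^0(H^2_\Iw)^{-1}|_\fp = \bLambda_{E,\fp}$ and the condition holds automatically. At any regular $\fp$, the ring $\bLambda_{E,\fp}$ is a discrete valuation ring, and writing $k := v_\fp(a_E)$ and $e := v_\fp(\Fitt^0(H^2_\Iw))$, a short length calculation gives $v_\fp(\Fitt^0(\mathrm{quotient})) = k + e$. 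The inequality $k \geq 0$ is then exactly the local form of the Fitting ideal inclusion (\ref{theorem-inclusion-3}), so (1) and (2) are equivalent. For (2) $\iff$ (3), the regular primes of $\bLambda_E$ decompose along the character classes via $\bLambda_E[1/p] = \bigoplus_{\chi \in I_E} \bLambda_{E,\chi}[1/p]$; each $\bLambda_{E,\chi}$ is a two-dimensional regular local ring, and at each of its height-one primes Fitting and characteristic ideals coincide. The divisibility in (3) is thus the character-by-character form of (2); for those $\chi$ with $\braket{(\eta_{E_n})_n}^\chi = 0$, the element $a_E$ vanishes at the corresponding primes and (2) is automatic there, which accounts for the restriction in (3).

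For the ``moreover'' clause, equality at each regular prime in either (2) or (3) is precisely $v_\fp(a_E) = 0$, i.e.\ that $a_E$ is a unit at $\fp$. The additional $\mu$-invariant hypothesis on the quotient, again via Lemma \ref{singular-prime-lemma}, makes that quotient vanish at every singular prime, which together with $\Fitt^0(H^2_\Iw)^{-1}|_\fp = \bLambda_{E,\fp}$ at such primes forces $a_E$ to be a unit there too. Hence $a_E$ is a unit at every height-one prime of $\bLambda_E$, and since $\bLambda_E$ is a direct sum of regular local rings, $a_E$ is a global unit. Consequently $\cL_{\eta,E}$ is a $\bLambda_E$-basis of $\Det_{\bLambda_E}(C_{E_\infty, S(E)}(T))$ for every $E$, and translating back through Lemma \ref{vertical-systems-reinterpretation-proposition} identifies $\eta$ as an $\cR[[\Gal(\cK/K)]]$-basis of $\ES^b(T,\cK)$.

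The main technical obstacle I expect is the careful bookkeeping of the correspondence between primes of $\bLambda_E$ and those of the character-component rings $\bLambda_{E,\chi}$, together with the verification that the isomorphism $\alpha$ genuinely identifies the determinantal map $\Theta^\infty_{T,E}$ with the tautological inclusion $\bLambda_E \hookrightarrow \Fitt^0(H^2_\Iw)^{-1}$. Once this identification is in hand, the remainder of the argument reduces to elementary length and ideal manipulations over the DVRs arising as localisations of the $\bLambda_{E,\chi}$.
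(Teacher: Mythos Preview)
Your approach is essentially the paper's: both invoke Proposition~\ref{reinterpret-iwasawa-conjecture-proposition} to rewrite (1) as the integrality of $\cL_{\eta,E}$, reduce to height-one primes via Lemma~\ref{height-one-reduction-lemma}, dispose of singular primes by the $\mu$-hypothesis and Lemma~\ref{singular-prime-lemma}, and analyse regular primes as discrete valuation rings. Your packaging via $\alpha$ and the scalar $a_E$ is a clean streamlining of what the paper does more directly with determinants and Fitting ideals; the paper's regular-prime step is exactly your length computation $v_\fp(\Fitt^0(\text{quotient})) = k+e$, just phrased as the inclusion $\braket{(\eta_{E_n})_n}_\fp \subseteq \Fitt^0(H^2_\Iw)_\fp \cdot (\bidual^r H^1_\Iw)_\fp$.

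Two points where your write-up is looser than the paper's. First, in the passage $(2)\Leftrightarrow(3)$ you match regular primes of $\bLambda_E$ with height-one primes of $\bLambda_{E,\chi}[1/p]$, but the characteristic-ideal divisibility in (3) is over $\bLambda_{E,\chi}$ and therefore also sees the unique height-one prime above $p$. The paper handles this residual prime using the $\mu$-vanishing hypothesis (together with the identity $\Fitt^0(M)=\Fitt^0(M_{\mathrm{fin}})\cdot\mathrm{char}(M)$); you should say so explicitly rather than leaving it implicit.

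Second, in the ``moreover'' clause your claim that ``$\bLambda_E$ is a direct sum of regular local rings'' is false: since $\cG_E$ is a non-trivial $p$-group, each component $\cR_\chi[\cG_E][[\Gal(E_\infty/E)]]$ for $\chi\in\overline{I_E}$ is local but not regular. The conclusion you want (unit at every height-one prime $\Rightarrow$ global unit) still holds, but it follows from Krull's Hauptidealsatz applied to a \emph{non-zero-divisor} in the Cohen--Macaulay ring $\bLambda_E$, so you must first argue that $a_E$ is a non-zero-divisor, i.e.\ that $e_\chi\cdot(\eta_{E_n})_n\neq 0$ for every $\chi\in I_E$. This is where the second $\mu$-vanishing hypothesis enters: it forces the quotient to be $\Lambda$-torsion component-by-component, which rules out any vanishing $\chi$-component of $\eta$. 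The paper sidesteps this by rerunning Lemma~\ref{height-one-reduction-lemma} with equalities rather than appealing to a global unit criterion, but your route is fine once this gap is filled.
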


\begin{proof}
    We shall establish these equivalences by arguing that $(1) \iff (2)$ and $(2) \iff (3)$
    
    At the outset we remark that by Proposition \ref{reinterpret-iwasawa-conjecture-proposition} one knows that $\eta \in \ES^b(T,\cK)$ if and only if for every $E \in \Omega(\cK/K)$ one has an inclusion
    \begin{align}
        \cL_{\eta,E} \in \Det_{\bLambda_E}(C_{E_\infty, S(E)}(T))\label{theorem-inclusion}
    \end{align}

    Now fix $E \in \Omega(\cK'/K)$. Without loss of generality we may assume that there exists $n \in \NN$ such that $\eta_{E_n}$ is non-trivial. Indeed if $\eta_{E_n}$ is trivial for all $n \in \NN$ then the inclusion (\ref{theorem-inclusion}) is vacuous and so there is nothing to prove in this case. 
    
    Appealing to Lemma \ref{height-one-reduction-lemma} we see that it suffices to check the above inclusion after localisation at a height one prime of $\bLambda_E$.

    We first perform the analysis for the singular primes of $\bLambda_E$. Fix such a prime $\fp$. By Lemma \ref{singular-primes-vanishing-lemma} and the hypothesis, one knows that $(H^2_\Iw(\cO_{E,S(E)}, T)_\fp = 0$. This combines with the exact sequence (\ref{iwasawa-cohomology-exact-sequence}) and the vanishing of the Euler-Poincar\'e characteristic of the relevant complex to imply that $(H^1_\Iw(\cO_{E,S(E)}, T))_\fp$ is free of rank $r$. We may thus pass to cohomology and apply the injective map $\Theta_{T,E, \fp}^\infty$ to find that the inclusion (\ref{theorem-inclusion}) is equivalent to the inclusion
    \begin{align}
        \braket{(\eta_{E_n})_n}_{\bLambda_{E,\fp}} \subseteq \left(\varprojlim_{n \in \NN} \bidual_{\cR[\cG_{E_n}]}^r H^1(\cO_{E_n,S(E_n)}, T)\right)_\fp\label{singular-inclusion}
    \end{align}
    
    We next analyse both directions of the implication for regular primes. To this end, fix first a regular prime $\fp$ of $\bLambda_E$. Then $\bLambda_{E,\fp}$ is a discrete valuation ring and is thus regular. In particular, we may pass to cohomology in the determinant functor. Moreover, $\bLambda_{E,\fp}$ has dimension one and so the determinants of finitely generated torsion $\bLambda_{E,\fp}$-modules coincide with the inverse of their initial Fitting ideals. These two facts taken together imply that the desired inclusion
    \begin{align}
        \braket{\cL_{\eta,E}}_{\bLambda_{E,\fp}} \subseteq \Det_{\bLambda_{E,\fp}}(\bLambda_{E,\fp} \otimes_{\bLambda_{E}} C_{E_\infty, S(E)}(T))\label{theorem-inclusion-2}
    \end{align}
    is equivalent to an inclusion
    \begin{align*}
        \braket{(\eta_{E_n})_n}_{\bLambda_{E,\fp}} \subseteq \Fitt^0_{\bLambda_{E}}\left(H^2_\Iw(\cO_{E, S(E)}, T)\right)_\fp\cdot \left(\varprojlim_{n} \bidual_{\cR[\cG_{E_n}]}^r H^1(\cO_{E_n, S(E_n)}, T)\right)_\fp
    \end{align*}
    If $\braket{(\eta_{E_n})_n}_{\bLambda_{E,\fp}}$ is trivial then this inclusion is vacuous. Otherwise, Lemma \ref{ES-torsion-lemma} implies that this inclusion is equivalent to the inclusion
    \begin{align}\label{fitting-ideal-inclusion}
        \Fitt_{\bLambda_{E}}^0\left(\frac{\displaystyle\varprojlim_{n \in \NN} \bidual_{\cR[\cG_{E_n}]}^r H^1(\cO_{E_n, S(E_n)}, T)}{\displaystyle\braket{(\eta_{E_n})_n}_{\bLambda_E}}\right)_\fp \subseteq \Fitt^0_{\bLambda_E}\left(H^2_\Iw(\cO_{E, S(E)}, T)\right)_\fp
    \end{align}
    This establishes the equivalence $(1) \iff (2)$ of the Theorem.
    
    Now suppose that the divisibility (\ref{iwasawa-theoretic-divisibilities}) holds for all $\psi \in \overline{I_E}$ such that there exists an $n \in \NN$ with $e_\psi\cdot\eta_{E_n}$ non-trivial. Fix a regular prime $\fp$ of $\bLambda_E$. Then there exists $\psi \in \overline{I_E}$ such that $\psi_\fp = \psi$. Observe that the localisation homomorphism $\bLambda_E \to \bLambda_{E,\fp}$ factors through the map $\bLambda_E \to \bLambda_{E,\psi}$ induced by multiplication by $e_\chi$. Moreover for every finitely generated torsion $\bLambda_{E,\chi}$-module $M$, \cite[Lem. 3.4.2]{nguyen-quang-do-nicolas} gives an equality
    \begin{align}\label{fitt-equals-char}
        \Fitt_{\bLambda_{E,\psi}}^0(M) = \Fitt_{\bLambda_{E,\psi}}^0(M_\mathrm{fin})\cdot \mathrm{char}_{\bLambda_{E,\psi}}(M)
    \end{align}
    where $M_\mathrm{fin}$ is the maximal finite submodule of $M$. Since $\fp$ is regular we may therefore base-change the condition (\ref{iwasawa-theoretic-divisibilities}) to $\bLambda_\fp$ to establish the implication $(3) \implies (2)$.
    
   Conversely, observe that the definition of characteristic ideals implies that for finitely generated torsion $\bLambda_{E,\psi}$-modules $M$ and $N$ one has that $\mathrm{char}(M) \subseteq \mathrm{char}(N)$ if and only if $\mathrm{char}(M_\fp) \subseteq \mathrm{char}(N_\fp)$ for all height one primes $\fp$ of $\bLambda_{E,\psi}$. Given this, the implication $(2) \implies (3)$ is now an immediate consequence of the equality (\ref{fitt-equals-char}) and the $\mu$-vanishing assumption.

    Finally, to prove the claim regarding bases of $\ES^b(T,\cK)$ we first observe that the second $\mu$-vanishing condition in the statement is equivalent to the assertion that the inclusion (\ref{singular-inclusion}) is in fact an equality. The claim then follows from the same argumentation as above by replacing all relevant inclusions/divisibilities with equalities and noting that $\eta$ is a basis of $\ES^b(T,\cK)$ if and only if for every $E \in \Omega(\cK'/K)$ one has that $\cL_{\eta,E}$ is a basis of $\Det_{\bLambda_E}(C_{E_\infty, S(E)}(T))$.
    
    This completes the proof of the Theorem.
\end{proof}

\begin{remark}
    Euler systems are widely believed to satisfy divisibility conditions of the shape (\ref{iwasawa-theoretic-divisibilities}). Such conditions are well-known in the rank one setting and appear in, for example, \cite[Thm. 3.3]{rubin}. They are also known to hold in certain cases in the higher rank setting for the Euler system of Rubin-Stark elements and in this regard we refer the reader to \cite[Thm. A]{buyukboduk}. As such, Theorem \ref{explicit-conditions-for-conj-theorem} can be seen to give evidence for Question \ref{main-question}.
\end{remark}

\begin{remark}\label{mu-vanishing-remark}
    The $\mu$-vanishing condition (\ref{mu-invariant-condition}) has been studied by various authors for the cyclotomic $\ZZ_p$-extension $E_\infty/E$ and for varying $T$:
    \begin{enumerate}
        \item{For $T = \ZZ_p(1)$ this is the celebrated conjecture of Iwasawa which is still only known in a few cases such as when $E$ is abelian over $\QQ$ through the work of Ferrero-Washington \cite{ferrero-washington}}.
        \item{If $T$ is the $p$-adic Tate module of an elliptic curve then Coates and Sujatha have conjectured in \cite[Conj. A]{coates-sujatha} that the dual of the fine Selmer group of $T$ over $E_\infty$ has vanishing $\mu$-invariant (and see below for the equivalence with Iwasawa cohomology).}
        \item{In general, Lim has conjectured in \cite[Conj. A]{lim} that the dual of the fine Selmer group of $T$ over $E_\infty$ has vanishing $\mu$-invariant. We remark that by Lemma 3.4 of \textit{loc. cit.} this is equivalent to the vanishing of the $\mu$-invariant of $H^2_\Iw(\cO_{E,S(E)}, T)$. Moreover, in Theorem 3.1 of \textit{loc. cit.} it is shown that if Iwasawa's conjecture holds for a particular finite extension $F$ of $E$ then Lim's conjecture holds for $E_\infty$.}
    \end{enumerate}
\end{remark}

\subsection{Standard hypotheses revisited}

In this section we consider representations and fields satisfying standard hypotheses in order to obtain stronger evidence in favour of Question \ref{main-question}. As such, we shall make use of the notations and assumptions established in \S\ref{standard-hypotheses-section}. We assume, for simplicity, that $K_\infty$ is the cyclotomic $\ZZ_p$-extension of $K$. We remark, however, that by considering Iwasawa-theoretic versions of $\mathrm{(SH_2)}$ and $\mathrm{(SH_3)}$ one can prove analogous results for other $\ZZ_p$-extensions (see \cite[Th. 4.27, Rem. 4.28]{bss2}).

In the sequel we write $\cL$ for the maximal subextension of $\cK'$ such that every field in $\Omega(\cL K_\infty/K)$ satisfies standard hypotheses for the representation $T$.

\begin{proposition}\label{iwasawa-inclusion-holds-proposition}
    Let $\eta \in \ES_r(T,\cK)$ be an Euler system. Then for every field $E \in \Omega(\cL/K)$ and regular prime $\fp$ of $\bLambda_E$ for which $\braket{(\eta_{E_n})_n}_\fp$ is non-trivial the inclusion (\ref{theorem-inclusion-3}) holds.
\end{proposition}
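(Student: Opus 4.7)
The plan is to reduce the claimed inclusion to the determinantal containment $\cL_{\eta, E} \in \Det_{\bLambda_{E,\fp}}(\bLambda_{E,\fp} \otimes_{\bLambda_E} C_{E_\infty, S(E)}(T))$ via the equivalence between conditions (1) and (2) established in the proof of Theorem~\ref{explicit-conditions-for-conj-theorem}, and then to verify this containment by combining Proposition~\ref{evidence-theorem} at each finite layer with an Iwasawa-theoretic refinement of the reduction diagram~(\ref{reduction-diagram}).

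The starting observation is that since $E \in \Omega(\cL/K)$, every layer $E_n$ lies in $\Omega(\cL K_\infty/K)$ and hence satisfies standard hypotheses for $T$. Applying Proposition~\ref{evidence-theorem} at each $E_n$ produces preimages $z_n \in \Det_{\cR[\cG_{E_n}]}(C_{E_n, S(E_n)}(T))$ with $\Theta_{T, E_n}(z_n) = \eta_{E_n}$. The main obstacle is that these $z_n$ need not be compatible with respect to the transition maps $\nu_{E_m/E_n}$ that define $\VS_\Iw(T, \cK)$ via Lemma~\ref{vertical-systems-reinterpretation-proposition}, and so they do not immediately assemble into a single element of $\VS_\Iw(T, \cK)$ projecting to $(\eta_{E_n})_n$.

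To bypass this obstacle, I would pass to the inverse limit along the tower $\{E_n\}$ in the diagram~(\ref{reduction-diagram}). Using that standard hypotheses hold uniformly along the tower, together with the compatibility of the Kolyvagin derivative construction of \cite{bss2} with the norm maps between layers, one obtains an Iwasawa refinement of the reduction diagram whose upper row reads $\VS_\Iw(T,\cK) \xrightarrow{\Theta_{T,\cK}^\infty} \ES_r(T,\cK)$, whose lower row reads $\varprojlim_n \StS_r(\cT_{E_n}, \cF) \xrightarrow{\sim} \varprojlim_n \KS_r(\cT_{E_n}, \cF)$, whose right-hand vertical arrow is the Iwasawa Kolyvagin derivative $\cD_\cT^\infty := \varprojlim_n \cD_{\cT_{E_n}}$, and whose left-hand vertical arrow $\varprojlim_n(\Psi_{\cT_{E_n}} \circ \Pi_{\cT_{E_n}})$ remains surjective.

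It then remains to extract the required containment. Localising at the regular prime $\fp$, the ring $\bLambda_{E, \fp}$ is a discrete valuation ring and so $\Det_{\bLambda_{E,\fp}}(\bLambda_{E,\fp} \otimes_{\bLambda_E} C_{E_\infty, S(E)}(T))$ is a free $\bLambda_{E,\fp}$-module of rank one; fix a basis $z$. The surjectivity noted above implies that the composite $\cD_\cT^\infty \circ \Theta_{T,\cK}^\infty$ is surjective after $\fp$-localisation, so there exists $x \in \bLambda_{E,\fp}$ with $\cD_\cT^\infty(\eta)_\fp = x \cdot (\cD_\cT^\infty \circ \Theta_{T,\cK}^\infty)(z)_\fp$. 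Evaluating this equality of Iwasawa Kolyvagin systems at the empty product ideal, exactly as in the proof of Proposition~\ref{evidence-theorem}, yields $(\eta_{E_n})_n = x \cdot \Theta_{T, E}^\infty(z)$ in the $\fp$-localisation, and hence $\cL_{\eta, E} = x \cdot z$ lies in $\Det_{\bLambda_{E,\fp}}(\bLambda_{E,\fp} \otimes_{\bLambda_E} C_{E_\infty, S(E)}(T))$, as required.
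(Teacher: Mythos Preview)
Your strategy diverges from the paper's and carries a genuine gap. The paper never attempts to build an Iwasawa-theoretic version of the reduction diagram~(\ref{reduction-diagram}); instead it works at each finite layer $E_n$ and invokes \cite[Thm.~3.6(iii)]{bss2} directly to obtain the containment $\im(\eta_{E_n}) \subseteq \Fitt^0_{\cR[\cG_{E_n}]}(H^2(\cO_{E_n,S(E_n)},T^*(1)))$. It then passes to the limit using Lemma~\ref{image-containment-lemma}, which shows that both sides are compatible with the tower transition maps, yielding $\im((\eta_{E_n})_n)_\fp \subseteq \Fitt^0_{\bLambda_E}(H^2_\Iw)_\fp$. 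Finally, over the discrete valuation ring $\bLambda_{E,\fp}$, the tautological sequence $0 \to \langle(\eta_{E_n})_n\rangle \to \bidual^r H^1_\Iw \to Q \to 0$ has its first two terms free of rank one, so \cite[Lem.~A.2(ii)]{bs} identifies $\im((\eta_{E_n})_n)_\fp$ with $\Fitt^0_{\bLambda_{E,\fp}}(Q)$, which is exactly the left-hand side of (\ref{theorem-inclusion-3}).

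The gap in your argument is the asserted ``Iwasawa refinement'' of diagram~(\ref{reduction-diagram}). You claim compatibility of the Kolyvagin derivatives $\cD_{\cT_{E_n}}$ along the tower, that $\varprojlim_n \KS_r(\cT_{E_n},\cF)$ is free of rank one over $\bLambda_{E,\fp}$, and that the left vertical arrow stays surjective in the limit --- none of which is established in the references you cite. The paper explicitly notes that an Iwasawa-theoretic theory of this kind (cf.\ \cite[Thm.~4.27]{bss2}) requires \emph{Iwasawa-theoretic} versions of $(\mathrm{SH}_2)$ and $(\mathrm{SH}_3)$ rather than their finite-level analogues, and restricts to the cyclotomic $\ZZ_p$-extension precisely so that the finite-level argument suffices. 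There is also a type mismatch: you take $z$ as a basis of the $\fp$-localised determinant for a single $E$, but then feed it into $\cD_\cT^\infty \circ \Theta_{T,\cK}^\infty$, whose domain is $\VS_\Iw(T,\cK)$; you would need to start from a global basis of $\VS_\Iw(T,\cK)$ and project. The paper's route avoids all of this by staying at finite level for the Kolyvagin-system input and handling the passage to the limit by an elementary argument about Fitting ideals.
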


\begin{proof}
    Fix a field $E \in \Omega(\cL/K)$. Then it is easy to see that for every $n \in \NN$, the field $E_n$ satisfies standard hypotheses for the representation $T$. As such, \cite[Lem. 3.10]{bss2} implies that the canonical and relaxed Selmer structures coincide for the induced representations $\cT_{E_n}$. 
    
    After taking into account \cite[Ex. 2.7(iii)]{bss2} we then have a canonical isomorphism
    \begin{align*}
        H^1_{\cF_{\mathrm{can}}^*}(K, \cT_{E_n}^\lor(1))^\lor \cong H^2(\cO_{E_n, S(E_n)}, T)
    \end{align*}
    We may thus apply \cite[Thm. 3.6(iii)]{bss2} to deduce an inclusion
    \begin{align*}
        \im(\eta_{E_n}) \subseteq \Fitt_{\cR[\cG_{E_n}]}^0(H^2(\cO_{E_n, S(E_n)}, T))
    \end{align*}
    for every $n \in \NN$.
    
    Now fix a regular prime $\fp$ of $\bLambda_E$. By Lemma \ref{image-containment-lemma} below we can pass to the limit to obtain
    \begin{align}\label{iwasawa-theoretic-inclusion}
        \im((\eta_{E_n})_n)_\fp \subseteq \Fitt_{\bLambda_E}^0(H^2_\Iw(\cO_{E, S(E)}, T))_\fp
    \end{align}
    Consider next the tautological exact sequence
    \begin{align*}
        0 \to \braket{(\eta_{E_n})_n}_{\bLambda_{E,\fp}} \to \bidual_{\bLambda_{E,\fp}}^r H^1_{\Iw}(\cO_{E, S(E)}, T)_\fp \to Q \to 0
    \end{align*}
    Since $\bLambda_{E,\fp}$ is a discrete valuation ring, a straightforward analysis of the Yoneda 2-extension associated to the representative $[P_\fp \to P_\fp]$ arising from Lemma \ref{bidual-compatibility-lemma}(2) implies that the first two terms of this sequence are free $\bLambda_{E,\fp}$-modules of rank one.
    
    Given this, we may apply \cite[Lem. A.2(ii)]{bs} to deduce that
    \begin{align*}
        \im((\eta_{E_n})_n)_\fp = \Fitt_{\bLambda_{E,\fp}}^0(Q)
    \end{align*}
    The desired inclusion (\ref{theorem-inclusion-3}) now follows immediately by combining the inclusion (\ref{iwasawa-theoretic-inclusion}) with the identification
    \begin{align*}
        Q \cong \frac{\displaystyle\varprojlim_{n \in \NN} \bidual_{\cR[\cG_{E_n}]}^r H^1(\cO_{E_n, S(E_n)}, T)}{\braket{(\eta_{E_n})_n}_{\bLambda_E}}
    \end{align*}
    induced by the isomorphism of Lemma \ref{bidual-compatibility-lemma}.
\end{proof}

\begin{lemma}\label{image-containment-lemma}
    Fix a field $E \in \Omega(\cK'/K)$.
    \begin{enumerate}
        \item{There is an equality of ideals
            \begin{align*}
                \varprojlim_{n \in \NN} \Fitt_{\cR[\cG_{E_n}]}^0(H^2(\cO_{E_n, S(E_n)}, T)) = \Fitt_{\bLambda_E}^0(H^2_\Iw(\cO_{E,S(E)}, T)) 
            \end{align*}
        }
        \item{For every Euler system $\eta \in \ES_r(T,\cK)$ there is an inclusion of ideals
        \begin{align*}
            \im((\eta_{E_n})_n) \subseteq (\varprojlim_n \im(\eta_{E_n}))
        \end{align*}
        with pseudo-null cokernel where $(\eta_{E_n})_n$ is viewed as an element of $\bidual_{\bLambda_E}^r H^1_\Iw(\cO_{E,S(E)}, T)$ via the isomorphism of Lemma \ref{bidual-compatibility-lemma}.
        }
    \end{enumerate}
\end{lemma}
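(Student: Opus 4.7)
The plan is to exploit the quadratic standard representative $[P \xrightarrow{\psi} P]$ of $C_{E_\infty, S(E)}(T)$ afforded by Lemma \ref{bidual-compatibility-lemma}(2), observing that its codescent $[P_n \xrightarrow{\psi_n} P_n]$ at each level $n$ serves as a quadratic standard representative of $C_{E_n, S(E_n)}(T)$. For part (1), I would first apply \cite[Prop. A.7(iii)]{bs} (in the same manner as in the proof of Lemma \ref{ES-torsion-lemma}) at both the Iwasawa and finite levels to describe each Fitting ideal explicitly as the ideal generated by the family $\{c_\sigma\}_{\sigma \in \mathfrak{S}_{d,r}}$ of $(d-r) \times (d-r)$ minors of (a matrix representation of) $\psi$, respectively by the images $\{c_{\sigma,n}\}_{\sigma \in \mathfrak{S}_{d,r}}$ at level $n$. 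Since $\psi_n$ is the codescent of $\psi$, the Fitting ideal $\Fitt^0_{\cR[\cG_{E_n}]}(H^2(\cO_{E_n,S(E_n)}, T^*(1)))$ coincides with the image of $\Fitt^0_{\bLambda_E}(H^2_\Iw(\cO_{E,S(E)}, T^*(1)))$ under the surjection $\bLambda_E \twoheadrightarrow \cR[\cG_{E_n}]$. The desired equality then reduces to passing to the inverse limit: I would apply a Mittag-Leffler argument to the surjective inverse system $0 \to I_n \to \cR[\cG_{E_n}] \to \cR[\cG_{E_n}]/I_n \to 0$ (with $I_n$ the image of the Iwasawa Fitting ideal), combined with the observation that $\bLambda_E/\Fitt^0_{\bLambda_E}(H^2_\Iw)$ is finitely generated and hence canonically isomorphic to $\varprojlim_n \cR[\cG_{E_n}]/I_n$.

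For part (2), I would build on the explicit parametrization derived in the proof of Lemma \ref{ES-torsion-lemma}: every element of $\bidual^r_{\bLambda_E} H^1_\Iw(\cO_{E,S(E)}, T^*(1))$ takes the form $c \cdot y$ with $c \in \Fitt^0_{\bLambda_E}(H^2_\Iw)^{-1}$, where the distinguished element $y := \sum_\sigma c_\sigma (b_{\sigma(1)} \wedge \cdots \wedge b_{\sigma(r)})$ satisfies $\im(y) = \Fitt^0_{\bLambda_E}(H^2_\Iw)$ by \cite[Prop. A.7(iii)]{bs}. Linearity of the evaluation pairing gives the multiplicativity $\im(c \cdot y) = c \cdot \Fitt^0_{\bLambda_E}(H^2_\Iw)$, and an analogous description holds at each finite level with $y_n$ the codescent of $y$, compatibly with the isomorphism of Lemma \ref{bidual-compatibility-lemma}(1). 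Writing $(\eta_{E_n})_n = c_\eta \cdot y$ and $\eta_{E_n} = c_{\eta,n} \cdot y_n$, the isomorphism of Lemma \ref{bidual-compatibility-lemma}(1) identifies each $c_{\eta,n}$ as the image of $c_\eta$ under the induced codescent map of total quotient rings. Combining these descriptions with part (1) then yields the claimed identity upon passing to the inverse limit.

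The main obstacle I anticipate is ensuring that the fractional parametrization $c \cdot y$ is genuinely compatible with the inverse limit structure, that is, that under the isomorphism of Lemma \ref{bidual-compatibility-lemma}(1) the coefficient $c_\eta \in Q(\bLambda_E)$ at Iwasawa level is correctly identified with the compatible system $(c_{\eta,n})_n$ of images in $Q(\cR[\cG_{E_n}])$, and that the inverse limit of the fractional ideals $\Fitt^0(H^2_n)^{-1}$ is naturally identified with $\Fitt^0_{\bLambda_E}(H^2_\Iw)^{-1}$. A secondary technicality is the Mittag-Leffler argument in part (1), which should follow readily from the surjectivity of the codescent maps restricted to the Fitting ideals but nonetheless merits explicit verification.
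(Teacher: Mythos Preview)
Your approach to part (1) is correct and essentially a more hands-on variant of the paper's: rather than citing \cite[Lem.~4.29]{bss2} for the surjectivity of the transition maps on Fitting ideals and \cite[Thm.~2.1]{gk} for the passage to the limit, you extract the surjectivity directly from the codescent of the standard representative and supply the Mittag--Leffler argument yourself. Both routes are valid.

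For part (2), however, the obstacle you anticipate is genuine and is not easily circumvented within your framework. There is in general no ring homomorphism $Q(\bLambda_E) \to Q(\cR[\cG_{E_n}])$, because non-zero-divisors of $\bLambda_E$ can (and typically do) become zero-divisors in $\cR[\cG_{E_n}]$; so speaking of ``the image of $c_\eta$ under the induced codescent map of total quotient rings'' is not well-defined. Relatedly, the finite-level parametrisation $\eta_{E_n} = c_{\eta,n}\cdot y_n$ need not exist: the element $y_n$ generates $Q(\cR[\cG_{E_n}])\otimes\bidual^r H^1$ only if $\Fitt^0_{\cR[\cG_{E_n}]}(H^2(\cO_{E_n,S(E_n)},T^*(1)))$ contains a non-zero-divisor, and there is no finite-level analogue of weak Leopoldt guaranteeing this. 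For the same reason the fractional ideal $\Fitt^0(H^2_n)^{-1}$ may fail to make sense.

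The paper avoids these difficulties by never passing to total quotient rings at finite level. Instead it observes that, once one establishes the surjectivity of $P^* \to H^1_\Iw(\cO_{E,S(E)},T^*(1))^*$ (via an $\Ext^1$-vanishing argument, using that $\im(\psi)$ is $\bLambda_E$-torsion-free together with \cite[Prop.~5.4.17, Cor.~5.5.4(ii)]{NSW}) and the analogous surjectivity at each level $n$, the image ideals admit the purely integral descriptions
\[
\im((\eta_{E_n})_n)=\bigl\{\phi((\eta_{E_n})_n)\ \big|\ \phi\in\exprod^r_{\bLambda_E} P^*\bigr\},\qquad
\im(\eta_{E_n})=\bigl\{\phi(\eta_{E_n})\ \big|\ \phi\in\exprod^r_{\cR[\cG_{E_n}]} P_n^*\bigr\}.
\]
Since $P$ is free of finite rank, one has $\exprod^r_{\bLambda_E} P^*\cong\varprojlim_n\exprod^r_{\cR[\cG_{E_n}]} P_n^*$ directly, and the equality of ideals follows. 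If you wish to repair your argument, the cleanest fix is to abandon the fractional coefficient $c_\eta$ and work instead with the integral basis coefficients $a_\sigma:=c_\eta\cdot c_\sigma\in\bLambda_E$ of $(\eta_{E_n})_n$ in $\exprod^r P$; but to identify $\langle a_\sigma\rangle$ with the bidual image you will still need the dual-surjectivity step above, at which point you have essentially reproduced the paper's proof.
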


\begin{proof}
    By \cite[Lem. 4.29]{bss2} one knows that the restriction maps $\cR[\cG_{E_m}] \to \cR[\cG_{E_n}]$ for $m > n$ induce surjective maps
    \begin{align*}
        \Fitt_{\cR[\cG_m]}^0(H^2(\cO_{E_m, S(E_m)}, T)) \twoheadrightarrow \Fitt_{\cR[\cG_n]}^0(H^2(\cO_{E_n, S(E_n)}, T)), \quad \im(\eta_{E_m}) \twoheadrightarrow \im(\eta_{E_n})
    \end{align*}
    By passing to the limit over the first of these transition maps and applying Theorem \ref{greither-kurihara-theorem} one obtains the identification in the first claim of the Lemma.
    
    As for the second claim, we first observe that the surjectivity of the second of the above transition maps implies that $\varprojlim_n \im(\eta_{E_n})$ is a well-defined non-zero ideal of $\bLambda_E$.
    
    Now, the definition of the bidual implies that there is an equality of ideals
    \begin{align*}
        \im((\eta_{E_n})_n) = \Set{\phi((\eta_{E_n})_n) | \phi \in \exprod_{\bLambda_E}^r H^1_\Iw(\cO_{E,S(E)}, T)^*}
    \end{align*}
    By Lemma \ref{bidual-compatibility-lemma}(2) there exists a short exact sequence
    \begin{align*}
        0 \to H^1_\Iw(\cO_{E,S(E)}, T) \to P \to I \to 0
    \end{align*}
    where $P$ is a free $\bLambda_E$-module and $I$ is $\bLambda_E$-torsion-free. Appealing to \cite[Prop. 5.4.17, Cor. 5.5.4(ii)]{NSW} one sees that $\Ext^1_{\bLambda_E}(I,\bLambda_E)$ is finite and so the natural map
    \begin{align*}
        P^* \to H^1_\Iw(\cO_{E,S(E)}, T)^*
    \end{align*}
    has pseudo-null cokernel. Given this we have an inclusion of ideals
    \begin{align*}
        \im((\eta_{E_n})_n) \subseteq \Set{\phi((\eta_{E_n})_n) | \phi \in \exprod_{\bLambda_E}^r P^*}
    \end{align*}
    with pseudo-null cokernel.
    
    Moreover, if we set $P_n := P \otimes_{\bLambda_E} \cR[\cG_{E_n}]$ then an entirely similar argument applied to the representative $P_n \to P_n$ of $C_{E_n, S(E_n)}(T)$ gives the equality
    \begin{align*}
        \im(\eta_{E_n}) = \Set{\phi(\eta_{E_n}) | \phi \in \exprod_{\cR[\cG_{E_n}]}^r P_n^*}
    \end{align*}
    
    On the other hand, since $P$ is projective we obtain a composite isomorphism
    \begin{align*}
        \Hom_{\bLambda_E}\left(\exprod_{\bLambda_E}^r P, \bLambda_E\right)
         &\cong \Hom_{\bLambda_E}\left(\varprojlim_n \exprod_{\cR[\cG_{E_n}]}^r P_n, \bLambda_E\right)\\
         &\cong \varprojlim_m \Hom_{\bLambda_E}\left(\varprojlim_n \exprod_{\cR[\cG_{E_n}]}^r P_n, \cR[\cG_{E_m}]\right)\\
        &\cong \varprojlim_m \Hom_{\cR[\cG_{E_m}]}\left(\exprod_{\cR[\cG_{E_m}]}^r P_m, \cR[\cG_{E_m}]\right)
    \end{align*}
    The Lemma now follows by combining this isomorphism with the above descriptions of the relevant ideals. 
\end{proof}

We end this section with a straightforward Corollary of Proposition \ref{iwasawa-inclusion-holds-proposition}.

\begin{corollary}\label{iwasawa-inclusion-holds-corollary}
    Suppose that for every field $E \in \Omega(\cL/K)$ the $\mu$-invariant hypothesis (\ref{mu-invariant-condition}) holds. Then for every Euler system $\eta \in \ES_r(T,\cK)$ there exists a basic Euler system that agrees with $\eta$ on the extension $\cL K_\infty/K$.
\end{corollary}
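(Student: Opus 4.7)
The plan is to apply Theorem \ref{explicit-conditions-for-conj-theorem} to the Euler system obtained by restricting $\eta$ to the sub-tower $\cL K_\infty/K$. More precisely, consider the tuple $\tilde\eta := (\eta_E)_{E \in \Omega(\cL K_\infty/K)}$, which is naturally an element of $\ES_r(T, \cL K_\infty)$ since the Euler system relations for $\eta$ specialise to those for $\tilde\eta$. The pair $(T, \cL K_\infty)$ plays the role of the admissible pair $(T,\cK)$ in the statement of Theorem \ref{explicit-conditions-for-conj-theorem}, with $\cL$ in the role of $\cK'$ (note that $\cL \subseteq \cK'$ by construction, so the decomposition $\cL K_\infty = \cL \cdot K_\infty$ inherits Hypothesis~\ref{k-hypothesis} from $\cK$).

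I would then verify condition (2) of Theorem \ref{explicit-conditions-for-conj-theorem} for the pair $(T,\cL K_\infty)$ and the Euler system $\tilde\eta$. For every $E \in \Omega(\cL/K)$, each layer $E_n$ satisfies standard hypotheses for $T$ by the defining maximality of $\cL$, so Proposition \ref{iwasawa-inclusion-holds-proposition} applies to $\tilde\eta$ and $E$. In particular, for every regular prime $\fp$ of $\bLambda_E$ for which $\braket{(\eta_{E_n})_n}_{\bLambda_{E,\fp}}$ is non-trivial, the Fitting-ideal inclusion (\ref{theorem-inclusion-3}) holds. For primes $\fp$ at which the localisation of $\braket{(\eta_{E_n})_n}$ is trivial there is nothing to check, as the corresponding condition in Theorem \ref{explicit-conditions-for-conj-theorem}(2) is vacuous.

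With the hypothesis of the Corollary providing the $\mu$-vanishing assumption (\ref{mu-invariant-condition}) for every $E \in \Omega(\cL/K)$, the implication $(2) \Rightarrow (1)$ of Theorem \ref{explicit-conditions-for-conj-theorem} applied to the pair $(T, \cL K_\infty)$ gives $\tilde\eta \in \ES^b(T, \cL K_\infty)$. Equivalently, via the reformulation in Proposition \ref{reinterpret-iwasawa-conjecture-proposition} and the diagram (\ref{vs-infinity-diagram-equation}), there exists a vertical system $z \in \VS_\Iw(T, \cL K_\infty)$ with $\Theta_{T, \cL K_\infty}^\infty(z) = \tilde\eta$. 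This $\tilde\eta$ is then the desired basic Euler system agreeing with $\eta$ on $\cL K_\infty/K$.

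The main subtlety I anticipate is the bookkeeping step of ensuring that the hypotheses of Theorem \ref{explicit-conditions-for-conj-theorem} transfer from the pair $(T,\cK)$ to the sub-pair $(T,\cL K_\infty)$: in particular, that Hypothesis \ref{k-hypothesis} and Hypothesis \ref{T-hypothesis} are inherited by $\cL K_\infty$ (the latter being clear since $\Omega(\cL K_\infty/K) \subseteq \Omega(\cK/K)$, the former following from $\cL \subseteq \cK'$). Beyond this verification, the proof is a direct synthesis of Proposition \ref{iwasawa-inclusion-holds-proposition} with the Iwasawa-theoretic characterisation of basic Euler systems given by Theorem \ref{explicit-conditions-for-conj-theorem}.
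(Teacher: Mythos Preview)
Your core idea — combine Proposition \ref{iwasawa-inclusion-holds-proposition} with the Iwasawa-theoretic criterion of Theorem \ref{explicit-conditions-for-conj-theorem} — is exactly what the paper does, but there is a genuine gap at the end and a formal issue in the middle.

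The gap is that the statement asks for a basic Euler system for the pair $(T,\cK)$, i.e.\ an element of $\ES^b(T,\cK)$, whose restriction to $\Omega(\cL K_\infty/K)$ coincides with that of $\eta$. You have only produced $\tilde\eta \in \ES^b(T,\cL K_\infty)$, equivalently a vertical system $z \in \VS_\Iw(T,\cL K_\infty)$; declaring that ``$\tilde\eta$ is the desired basic Euler system'' is not enough, since $\tilde\eta$ does not live in $\ES^b(T,\cK)$. The paper closes this gap by observing that the transition maps $\nu^\infty_{E'/E}$ are surjective, so the natural projection $\VS_\Iw(T,\cK) \twoheadrightarrow \VS_\Iw(T,\cL K_\infty)$ is surjective; one then lifts $(\cL_{\eta,E})_{E \in \Omega(\cL/K)}$ to some $z \in \VS_\Iw(T,\cK)$ and takes $\Theta_{T,\cK}^\infty(z) \in \ES^b(T,\cK)$.

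The formal issue is your claim that $(T,\cL K_\infty)$ inherits Hypothesis \ref{k-hypothesis}. Part (2) does indeed follow from $\cL \subseteq \cK'$, but part (1) — that the extension contains the maximal $p$-extension inside the ray class field modulo $\mathfrak{q}$ for almost all $\mathfrak{q}$ — has no reason to hold for $\cL K_\infty$, since $\cL$ is cut out by the restrictive standard-hypotheses condition. The paper sidesteps this: rather than invoking Theorem \ref{explicit-conditions-for-conj-theorem} as a black box for the pair $(T,\cL K_\infty)$, it uses the per-$E$ content of that theorem's proof (namely, that for each fixed $E$ the $\mu$-vanishing and the regular-prime inclusion together force $\cL_{\eta,E} \in \Det_{\bLambda_E}(C_{E_\infty, S(E)}(T))$), and then passes to the inverse limit over $\Omega(\cL/K)$ directly. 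This requires no admissibility of the sub-pair.
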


\begin{proof}
    Fix an Euler system $\eta \in \ES_r(T,\cK)$. Then by Proposition \ref{iwasawa-inclusion-holds-proposition} and Theorem \ref{explicit-conditions-for-conj-theorem} one knows that for every $E \in \Omega(\cL/K)$ the element $\cL_{\eta, E}$ is contained in $\Det_{\bLambda_E}(C_{E_\infty, S(E)}(T))$.
    
    Now since the transition map $\nu_{E'/E}^\infty$ in the definition of $\VS_\Iw(T,\cK)$ is surjective, the natural projection map
    \begin{align*}
        \VS_\Iw(T,\cK) \twoheadrightarrow \VS_\Iw(T,\cL K_\infty)
    \end{align*}
    is a surjection. We are therefore able to lift the vertical system $(\cL_{\eta,E})_{E \in \Omega(\cL/K)}$ to a vertical system $z \in \VS_\Iw(T,\cK)$. The element $\Theta_{T,\cK}^\infty(z)$ then gives the desired basic Euler system.
\end{proof}

\section{Applications to arithmetic}

\subsection{The multiplicative group}

\subsubsection{The set-up}

Let $K$ be a totally real field and $\psi: G_K \to \overline{\QQ_p}^\times$ a non-trivial character of finite prime-to-$p$ order. Denote by $\cR$ the finite $\ZZ_p$-algebra generated by the image of $\psi$. If $L$ denotes the fixed field of $\psi$ then we suppose throughout this section that $L$ is totally real and that no $p$-adic place of $K$ splits completely or ramifies in $L$. We moreover write $\Delta = \Gal(L/K)$.

We write $\cR(1) \otimes \psi^{-1}$ for the $G_K$-representation which equals $\cR$ as an $\cR$-module and upon which $G_K$ acts via $\chi_\mathrm{cyc}\psi^{-1}$ where $\chi_\mathrm{cyc}$ is the cyclotomic character of $K$.

Then $T = \cR(1) \otimes \psi^{-1}$ is a $p$-adic representation with coefficients in $\cR$. Fix a finite set of places $S$ of $K$ containing $S_\mathrm{min}(T)$ and an abelian extension $\cK/K$ such that the pair $(T,\cK)$ is admissible.

In this setting we note that the weak Leopoldt conjecture holds by the result recalled in Remark \ref{weak-leopolodt-remark}.

For each $E \in \Omega(\cK/K)$ we make the following definitions:
\begin{itemize}
    \item{$U_{E,S(E)} := (\cO_{LE,S(E)}^{\times})^\psi$ and $U_{E} := (\cO_{LE}^\times)^\psi$}
    \item{$A_{E,S(E)} := \Cl_{S(E)}(LE)^\psi$ and $A_{E} := \Cl(LE)^\psi$}
    \item{$Y_{E,V}$ for the $(p,\psi)$-part of the free abelian group on the places in $V_{LE}$ where $V$ is any finite set of places of $K$.}
\end{itemize}
For any $\square \in \set{U, A, Y}$ we set $\square_{E,V}^\infty := \varprojlim_n \square_{E_n, V}$. Here the limit is taken with respect to the homomorphisms induced by the norm map if $\square \in \set{U,A}$ and with respect to the map induced by restriction of places otherwise.

Note, moreover, that for any pair of fields $E \subseteq E'$ there is another transition map
\begin{align*}
	Y_{E',V} &\to Y_{E,V}\\
	w &\mapsto f_{w/v}v
\end{align*}
where for any place $w$ of $V_{E'}$ lying above $v \in V_E$ we write $f_{w/v}$ for the inertial degree. We then denote by $Y^{\overline{\infty}}_{E,V}$ the limit $\varprojlim_{n \in \NN} Y_{E_n,V}$ taken with respect to this map.

Observe that Kummer theory gives a canonical isomorphism
\begin{align*}
    U_{E,S(E)} \cong H^1(\cO_{E,S(E)}, T)
\end{align*}
In particular, any $\eta \in \ES_r(T,\cK)$ can be regarded as a compatible collection of elements in $U_{E,S(E)}$ as $E$ varies over $\Omega(\cK/K)$.

We first record the following Lemma.

\begin{lemma}\label{Gm-lemma}
    Let $K = \QQ$ and $\eta \in \ES_1(T,\cK)$ be an Euler system and $E \in \Omega(\cK'/K)$. Then for every regular prime $\fp$ of $\bLambda_E$ the condition (\ref{theorem-inclusion-3}) is equivalent to an inclusion
    \begin{align}\label{gm-lemma-inclusion}
        \Fitt^0_{\bLambda_{E,\fp}}((U_{E}^\infty/(\eta_{E_n})_n)_\fp)\cdot \Fitt_{\bLambda_{E,\fp}}^0(Y_{E,S(E)\setminus S_p(K), \fp}^{{\infty}})^{-1} \subseteq \Fitt^0_{\bLambda_{E,\fp}}(A_{E,\fp}^\infty)
    \end{align}
\end{lemma}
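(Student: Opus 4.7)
The strategy is to rewrite both sides of the claimed equivalence using Kummer-theoretic identifications and a classical exact sequence of Iwasawa modules. First, I would identify the abstract cohomology groups appearing in (\ref{theorem-inclusion-3}) with familiar arithmetic objects in this setting. Kummer theory, combined with $(p, \psi)$-descent, gives $H^1_\Iw(\cO_{E, S(E)}, T^*(1)) \cong U_{E, S(E)}^\infty$, while the classical identification $H^2(\cO_{F, S}, \ZZ_p(1)) \cong \Cl_S(F) \otimes \ZZ_p$ yields $H^2_\Iw(\cO_{E, S(E)}, T^*(1)) \cong A_{E, S(E)}^\infty$ after passing to the $(p, \psi)$-component and Iwasawa limit. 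Since $K = \QQ$ and Hypothesis $(\mathrm{H_0})$ forces $r = 1$, and since at a regular prime $\fp$ the ring $\bLambda_{E, \fp}$ is a discrete valuation ring, one can invoke Lemma \ref{bidual-compatibility-lemma} (together with the argument from the proof of Theorem \ref{explicit-conditions-for-conj-theorem}) to identify the bidual with the underlying Iwasawa cohomology module after $\fp$-localisation. Thus the condition (\ref{theorem-inclusion-3}) becomes
\begin{align*}
\Fitt_{\bLambda_{E, \fp}}^0\!\left(U_{E, S(E), \fp}^\infty / \braket{(\eta_{E_n})_n}\right) \subseteq \Fitt_{\bLambda_{E, \fp}}^0\!\left(A_{E, S(E), \fp}^\infty\right).
\end{align*}

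Next, I would apply the standard five-term exact sequence of Iwasawa modules
\begin{align*}
0 \to U_E^\infty \to U_{E, S(E)}^\infty \to Y_{E, S(E) \setminus S_\infty(K)}^\infty \to A_E^\infty \to A_{E, S(E)}^\infty \to 0,
\end{align*}
obtained as the $(p, \psi)$-component and Iwasawa limit of the classical divisor sequence relating units, $S$-units and class groups on the fields $LE_n$. Over the DVR $\bLambda_{E, \fp}$, multiplicativity of Fitting ideals for short exact sequences of torsion modules applies after quotienting by the rank-one Euler-system submodule to kill the free parts of $U_E^\infty$ and $U_{E, S(E)}^\infty$. Splitting the long exact sequence at its middle image and tracking characteristic ideals in the alternating product, one obtains
\begin{align*}
\Fitt^0(U_{E, S(E)}^\infty/\eta)_\fp \cdot \Fitt^0(A_E^\infty)_\fp = \Fitt^0(U_E^\infty/\eta)_\fp \cdot \Fitt^0(Y_{E, S(E) \setminus S_\infty}^\infty)_\fp \cdot \Fitt^0(A_{E, S(E)}^\infty)_\fp.
\end{align*}
Substituting this identity into the reformulated (\ref{theorem-inclusion-3}) and cancelling the non-zero common factor $\Fitt^0(A_{E, S(E)}^\infty)_\fp$, the inclusion becomes equivalent to
\begin{align*}
\Fitt^0(U_E^\infty/\eta)_\fp \cdot \Fitt^0(Y_{E, S(E) \setminus S_\infty(K)}^\infty)_\fp \subseteq \Fitt^0(A_E^\infty)_\fp.
\end{align*}

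Finally, I would reconcile the two $Y$-terms by tracking the local contributions place-by-place. The passage from $Y_{E, S(E) \setminus S_\infty(K)}^\infty$ (indexed over all finite places in $S(E)$ with natural transition maps) to $Y_{E, S(E) \setminus S_p(K)}^{\overline{\infty}}$ (indexed over infinite and non-$p$-adic finite places in $S(E)$ with restriction maps at finite non-$p$-adic places) decomposes into: (i) removal of the $p$-adic places, whose local contribution is absorbed via the explicit resolution (\ref{P-E-infinity-resolution}) into a factor that re-emerges as the archimedean contribution on the other side; and (ii) at each finite non-$p$-adic place $v$, the change of transition map inverts the corresponding local Euler factor $P_v(\Fr_v^{-1})$. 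Collecting these local identifications accounts precisely for the appearance of $\Fitt^0(Y_{E, S(E) \setminus S_p(K)}^{\overline{\infty}})^{-1}$ on the left-hand side of (\ref{gm-lemma-inclusion}) and yields the claimed equivalence. The principal obstacle is this last paragraph: the careful local-global bookkeeping of transition-map conventions and the correct treatment of the $p$-adic and archimedean places, which must be verified via the explicit local cohomology resolutions together with Lemma \ref{image-containment-lemma} to ensure compatibility with the relevant inverse limits.
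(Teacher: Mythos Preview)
Your proposal contains a genuine error in the identification of $H^2_\Iw$. You assert that ``the classical identification $H^2(\cO_{F, S}, \ZZ_p(1)) \cong \Cl_S(F) \otimes \ZZ_p$ yields $H^2_\Iw(\cO_{E, S(E)}, T^*(1)) \cong A_{E, S(E)}^\infty$'', but this is not correct: at each finite level there is a non-trivial Brauer-group contribution, and after taking $\psi$-parts and Iwasawa limits one has instead the exact sequence
\[
0 \to A_E^\infty \to H^2_\Iw(\cO_{E,S(E)}, T) \to Y_{E,S(E)\setminus S_p(K)}^{\overline{\infty}} \to 0
\]
(this is exactly (\ref{H2Iw-exact-sequence}) in the paper). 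The term $Y_{E,S(E)\setminus S_p(K)}^{\overline{\infty}}$ in the statement of the Lemma arises \emph{here}, as the quotient of $H^2_\Iw$ by $A_E^\infty$, and not from the five-term divisor sequence as you suggest. Once you use this correct description, the right-hand side of (\ref{theorem-inclusion-3}) factorises over the DVR $\bLambda_{E,\fp}$ as $\Fitt^0(A_E^\infty)_\fp \cdot \Fitt^0(Y^{\overline{\infty}})_\fp$, which is precisely what is needed.

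You also miss the key simplification on the $H^1$-side. In the five-term sequence
\[
0 \to U_E^\infty \to U_{E, S(E)}^\infty \to Y_{E, S(E)}^\infty \to A_E^\infty \to A_{E, S(E)}^\infty \to 0,
\]
the middle term $Y_{E,S(E)}^\infty$ (with the natural norm transition maps) actually \emph{vanishes}: no finite place of $LE$ splits completely in the cyclotomic $\ZZ_p$-tower, so only $p$-adic places survive the limit, and those are killed upon passing to the $\psi$-part by the standing hypothesis that no $p$-adic place of $K$ splits completely in $L$. Hence $U_E^\infty = U_{E,S(E)}^\infty$ and $A_E^\infty \cong A_{E,S(E)}^\infty$, and the five-term sequence contributes nothing further. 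Your alternating-product manipulation and the subsequent ``reconciliation of $Y$-terms'' --- which you yourself flag as the principal obstacle --- are therefore both unnecessary and built on the wrong foundation. The paper's argument is simply to combine the vanishing of $Y_{E,S(E)}^\infty$ with the short exact sequence for $H^2_\Iw$; no local bookkeeping is required.
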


\begin{proof}
    At the outset we remark that we have a natural exact sequence
    \begin{align*}
        0 \to U_{E}^\infty \to U_{E, S(E)}^\infty \to Y_{E,S(E)}^{\overline{\infty}} \to A_E^\infty \to A_{E,S(E)}^\infty \to 0
    \end{align*}
    Since no finite place of $LE$ can split completely in $LE_\infty$ it follows that $Y_{E, S(E)}^{\overline{\infty}} = Y_{E, S_p(K)}^{\overline{\infty}}$. By hypothesis, no $p$-adic place of $K$ can split completely in $L$ and so this latter limit is in fact trivial.
    
    As such, the inclusion and quotient maps respectively give canonical identifications $U_E^\infty = U_{E,S(E)}^\infty$ and $A_E^\infty \cong A_{E,S(E)}^\infty$.
    
    On the other hand, the fact that $\psi$ is non-trivial implies that we have an exact sequence
    \begin{align}\label{H2Iw-exact-sequence}
        0 \to A_E^\infty \to H^2_\Iw(\cO_{E,S(E)}, T) \to Y_{E,S(E)\setminus S_p(K)}^{{\infty}} \to 0
    \end{align}
    
    The proof of the Lemma now follows from the well-known fact that Fitting ideals are multiplicative on short-exact sequences over rings of global dimension one. 
\end{proof}

\subsubsection{An interlude on Euler systems in the sense of Kolyvagin}\label{interlude-section}

A key step in the proof of Theorem \ref{pre-question-valid-theorem} will be to use the argument employed by Greither \cite[Thm. 3.1]{greither} in proving the Iwasawa main conjecture over arbitrary abelian extensions. The argument of \textit{loc. cit.} is carried out in the classical framework of Euler systems in the sense of Kolyvagin, the notation of which differs from the more modern definition \ref{euler-system-definition}. In order to facilitate exposition in the sequel we now recall this earlier definition and how elements of $\ES_1(T,\cK)$ give rise to such systems.

To do this we follow Greither in \cite[p. 471]{greither} (and Burns and Seo in \cite[\S 3.2]{BuSe}) by fixing a rational prime $p$, an abelian field $F \in \Omega(K^\ab/K)$ and $M$ a power of $p$. We let $\set{\zeta_n}_{n \geq 1}$ be a compatible family of primitive $n^{th}$-roots of unity and we denote $F(m) = F(\zeta_m)$ for each natural number $m$. We write $\cJ = \cJ_{F,M}$ for the set of square-free positive integers whose every prime divisor both splits completely in $F$ and is congruent to 1 modulo $M$.

\begin{definition}\label{euler-kolyvagin-system-definition}
	A \textit{$p$-adic Euler-Kolyvagin system for the field $F$} is a collection $(\varepsilon_{r})_{r \in \cJ}$ satisfying the following properties for a given $r \in \cJ_{F,M}$ and each prime divisor $\ell$ of $r$:
	\begin{enumerate}[labelwidth=1.5em,labelindent=10em,leftmargin=4.2em]
		\item[$(\ES_1)\;\,$]{$\varepsilon_r$ belongs to $F(r)^\times \widehat{\otimes}_\ZZ \ZZ_p$,}
		\item[$(\ES_2)\;\,$]{$\varepsilon_r$ belongs to $\cO_{F(r),p}^\times$ when $r > 1$,}
		\item[$(\ES_3)\;\,$]{$N_{F(r)/F(r/\ell)}(\varepsilon_{r}) = (1-\Fr_\ell^{-1})\cdot\varepsilon_{r/\ell}$,}
		\item[$(\ES_4)_p$]{For all primes $\mathfrak{l}$ of $F(r)$ lying above $\ell$ we have $\varepsilon_r \equiv \varepsilon_{r/\ell} \pmod{\mathfrak{l}}_p$ where $(\mathrm{mod}\;\mathfrak{l})_p$ is taken to mean that the congruence holds only after projecting to the $p$-primary component of the residue field $\cO_{F(r)}/\mathfrak{l}$.}
	\end{enumerate}
\end{definition}

Henceforth we shall assume that $K = \QQ$, $\cK'$ is the maximal abelian pro-$p$ extension of $K$ unramified at the primes in $S$ and $\cK = \cK'\QQ_\infty$ where $\QQ_\infty$ is the cyclotomic $\ZZ_p$-extension of $\QQ$.

In the following Lemma we shall demonstrate how elements of $\ES_1(T,\cK)$ give rise to useful Euler-Kolyvagin systems for fields $E \in \Omega(\cK/K)$. While this is in principle a straightforward task, there are two minor inconveniences that make matters somewhat delicate. Firstly, elements of $\eta \in \ES_1(T,\cK)$ do not admit values over full ray class fields, rather only over the maximal $p$-subextensions of the various cyclotomic fields over $L$. Secondly, $\eta$ admits values inside the $\psi$-isotypical part of the various unit groups. We will deal with these issues by introducing certain useful splittings of modules of Euler systems.

To formulate this we write $\mathfrak{f}_E$ (resp. $E^p$) for the conductor (resp. maximal $p$-subextension) of an abelian field $E$ and $N^\dag$ for the set of natural numbers congruent to 2 modulo 4.

\begin{proposition}\label{euler-kolyvagin-proposition}
    Fix a field $E \in \Omega(\cK/K)$ and a natural number $d \in \NN^\dag$ such that $d \mid \mathfrak{f}_E$. Given an Euler system $\eta \in \ES_1(T,\cK)$ there exists a $p$-adic Euler-Kolyvagin system $(\varepsilon_{\eta,d,r})_{r \in \cJ_{F,M}}$ for the field $LE$ with the property that
	\begin{align*}
		\varepsilon_{\eta,d,1} = N_{L\QQ(\mathfrak{f}_{E})^p/LE}(\eta_{\QQ(d)^p})
	\end{align*}
	In particular, if $E = \QQ(n)^p$ then $\varepsilon_{\eta,d,1} = \eta_{\QQ(d)^p}$.
\end{proposition}

\begin{proof}
    At the outset we shall first demonstrate the existence of a $\cR_{L\cL/\QQ}$-splitting of the natural map
    \begin{align*}
        \Psi: \ES_1(\ZZ_p(1), L\cL/\QQ) \to \ES_1(\ZZ_p(1), L\cK/\QQ) \to \ES_1(T,\cK/\QQ)
    \end{align*}
    Here $\cL$ is the extension $\cL'\QQ_\infty$ with $\cL'$ the maximal abelian extension of $\QQ$ unramified outside $S$.

To this end we fix a field $E \in \Omega(\cK/\QQ)$ and remark that the fact that $|\Delta|$ is prime-to-$p$ implies the existence of a canonical decomposition
    \begin{align*}
        \ZZ_p[\cG_{LE}] \cong \bigoplus_{\phi \in \widehat{\Delta}} \ZZ_p(\im(\phi))[\cG_E]
    \end{align*}
    By tensoring this decomposition over $\ZZ_p[\cG_{LE}]$ with $\cO_{LE}^\times$ we then obtain the decomposition
    \begin{align*}
        \cO_{LE}^\times \cong \bigoplus_{\phi \in \widehat{\Delta}} \cO_{LE}^{\times,\phi}
    \end{align*}
    In particular one sees that there is an embedding
    \begin{align}
        \iota_E: U_{E} \hookrightarrow \cO_{LE}^\times
    \end{align}
    with the property that the composite $U_{E} \hookrightarrow \cO_{LE}^\times \to U_E$ is the identity and, for each pair of fields $E \subseteq E'$, the diagram
    \begin{center}
        \begin{tikzcd}
            U_{E'} \arrow[r, hookrightarrow] \arrow[d, "N_{E'/E}"] &\cO_{LE',S(E')} \arrow[d, "N_{E'/E}"]\\
            U_E \arrow[r, hookrightarrow] &\cO_{LE,S(E')} 
        \end{tikzcd}
    \end{center}
    commutes. As such, the tuple of maps $(\iota_E)_{E \in \Omega(\cK/K)}$ defines a homomorphism of $\cR_{L\cK/\QQ}$-modules
    \begin{align*}
        \ES_1(T,\cK/\QQ) \to \ES_1(\ZZ_p(1), L\cK/\QQ)
    \end{align*}
    splitting the natural map $\ES_1(\ZZ_p(1), L\cK/\QQ) \to \ES_1(T,\cK/\QQ)$.

    On the other hand, the group $\cH_E := \Gal(\cG_{LE}/\cG_{LE^p})$ has order prime-to-$p$ by definition. In particular, we have a canonical idempotent $e_{\cH_E} := |\cH_E|^{-1} \sum_{\sigma \in \cH_E} \sigma$ in $\ZZ_p[\cG_{LE}]$ which induces a canonical decomposition
    \begin{align}
        \ZZ_p[\cG_{LE}] &\cong e_{\cH_E}(\ZZ_p[\cG_{LE}]) \oplus (1-e_{\cH_E})(\ZZ_p[\cG_{LE}])\notag\\
        &\cong \ZZ_p[\cG_{LE^p}] \oplus (1-e_{\cH_E})(\ZZ_p[\cG_{LE}])\label{group-ring-idempotent-decomposition}
    \end{align}
    A straightforward calculation using this decomposition then shows that for each pair of fields $E \subseteq E'$ we have a commutative diagram
    \begin{equation}\label{group-ring-norm-diagram}
        \begin{tikzcd}[row sep=3em]
            \ZZ_p[\cG_{LE'}] \arrow[r] \arrow[d, "N_{LE'/LE}"] &\ZZ_p[\cG_{LE^{',p}}] \arrow[d, "N_{LE^{',p}/LE^p}"]\\
            \ZZ_p[\cG_{LE}] \arrow[r] &\ZZ_p[\cG_{LE^{p}}]
        \end{tikzcd}
    \end{equation}
    By tensoring the decomposition (\ref{group-ring-idempotent-decomposition}) over $\ZZ_p[\cG_{LE}]$ with $\cO_{LE}^\times$ we then obtain a decomposition
    \begin{align*}
        \pi_{E}: \cO_{LE}^\times \cong \cO_{LE^p}^\times \oplus (1-e_{\cH_{E^p}})\cO_{LE}^\times
    \end{align*}
    This then combines with the diagram (\ref{group-ring-norm-diagram}) to imply that the tuple of homomorphisms $(\pi_E)_{E \in \Omega(\cL/\QQ)}$ induces a homomorphism of $\cR_{L\cL/\QQ}$-modules
    \begin{align*}
        \ES_1(\ZZ_p(1), L\cK/\QQ) \to \ES_1(\ZZ_p(1), L\cL/\QQ)
    \end{align*}
    which splits the natural map $\ES_1(\ZZ_p(1), L\cL/\QQ) \to \ES_1(\ZZ_p(1), L\cK/\QQ)$.

    The above discussion therefore implies the existence of a natural decomposition
    \begin{align*}
        \ES_1(\ZZ_p(1), L\cL/\QQ) = \ES_1(T, \cK/\QQ) \oplus \ker(\Psi)
    \end{align*}
    Via this we may view the system $\eta \in \ES_1(T,\cK)$ as an element $\overline{\eta} \in \ES_1(\ZZ_p(1), L\cL/\QQ)$ in such a way that $\overline{\eta}_{LM^p} = \eta_{M^p}$ for all $M \in \Omega(\cL/\QQ)$.
    
    It now remains to show that the system $\overline{\eta} \in \ES_1(\ZZ_p(1), L\cL/\QQ)$ gives rise to an Euler-Kolyvagin system of the desired form. To do this we write $\mu_\infty^*$ for the union of all $\mu_m$ with $m$ divisible by $p$ and prime to the conductor $\mathfrak{f}_L$ of $L$. We define a function
    \begin{align*}
        f: \mu_\infty^* &\to \overline{L}^\times \widehat{\otimes}_\ZZ \ZZ_p\\
        \zeta_m &\mapsto \left\{\begin{array}{cc}
            \overline{\eta}_{L(m)} &\text{if } m \not\equiv 2 \pmod{4}\\
            (1-\Fr_2^{-1})\overline{\eta}_{L(m)} &\text{if } m = 2m' \text{ with } m' \geq 1 \text{ odd}
        \end{array}\right.
    \end{align*}
    We note that $f$ is necessarily $\Gal(\overline{L}/L)$-equivariant and satisfies the distribution relation
    \begin{align}
        \prod_{\zeta^a = \varepsilon} f(\zeta) = f(\varepsilon)\label{p-adic-circular-distribution-relation}
    \end{align}
    for all $a \in \NN$ and $\varepsilon \in \mu_\infty^*$.
    
	Given this construction we now set $F = LE$ and claim that the assignment
	\begin{align*}
		\varepsilon_r = \varepsilon_{\eta,d,r} := N_{L(\mathfrak{f}_Er)/F(r)}f\Big(\zeta_{d}\cdot \prod_{\ell \mid r} \zeta_\ell\Big)
	\end{align*}
	defines the desired $p$-adic Euler-Kolyvagin system for $F$.

	The fact that this collection satisfies $(\ES_1)$ and $(\ES_2)$ is immediate from the assumptions. As for $(\ES_3)$, we calculate

	\begin{align*}
		N_{F(r)/F(r/\ell)}N_{L(\mathfrak{f}_Er)/F(r)} (\varepsilon_r) &= N_{F(r)/F(r/\ell)}N_{L({\mathfrak{f}_Er})/F(r)}f\Big(\zeta_{d}\cdot \prod_{q \mid (r/\ell)} \zeta_q\Big)\\
		&= N_{L({(r/\ell)\mathfrak{f}_E})/F({r/\ell})}N_{L({\mathfrak{f}_Er})/L({\mathfrak{f}_E(r/\ell)})}f\Big(\zeta_{d}\cdot \prod_{q \mid (r/\ell)} \zeta_q\Big)\\
		&= N_{L({\mathfrak{f}_E(r/\ell)})/F({r/\ell})}\prod_{i=1}^{\ell-1} f\Big(\zeta_{d}\zeta_\ell^i \prod_{q\mid (r/\ell)} \zeta_q\Big)\\
		&= N_{L({\mathfrak{f}_E(r/\ell)})/F({r/\ell})}\frac{ f\Big(\Big(\zeta_{d} \prod_{q \mid (r/ \ell)} \zeta_q\Big)^\ell\Big)}{ f\Big(\zeta_{d} \prod_{q \mid (r/ \ell)} \zeta_q\Big)}\\
		&= (1-\Fr_\ell^{-1})\cdot N_{L({\mathfrak{f}_E(r/\ell)})/F({r/\ell})}f\Big(\zeta_{d} \prod_{q \mid (r/\ell)} \zeta_q\Big)\\
		&= (1-\Fr_\ell^{-1})\cdot \varepsilon_{r/\ell}
	\end{align*}
	Here the second equality follows from standard functoriality properties of norms, the third by the definition of the norm and the fourth by applying the distribution relation (\ref{p-adic-circular-distribution-relation}).

	Turning now to the verification of the condition $(\ES_4)_p$, we first note that the fact that $\eta$ constitutes a $p$-adic Euler system in the sense of \cite[Def. II.1.1]{rubin} combines with a well-known argument (see, for instance, \cite[Cor. IV.8.1, Rem. IV.8.2]{rubin} or \cite[Thm. A.1]{bss3}) to imply that for each prime $\mathfrak{l}$ of $L(\mathfrak{f}_Er)$ lying above $\ell$ the congruence
	\begin{align}
		f(\zeta_{dr}) \equiv f(\zeta_{d(r/\ell)})^{\Fr_\ell^{-1}} \pmod{\mathfrak{l}}_p\label{euler-kolyvagin-congruence}
	\end{align}
	holds. Since for any primes $p \neq q$ we have $\zeta_{pq} = \zeta_p^{\Fr_q^{-1}}\zeta_q^{\Fr_p^{-1}}$, and $\Fr_q$ acts trivially on $\zeta_q$, a straightforward calculation combines with this congruence to yield the further congruence
	\begin{align*}
		f\Big(\zeta_{d}\cdot \prod_{\ell \mid r} \zeta_\ell\Big) \equiv f\Big(\zeta_{d}\cdot \prod_{q \mid (r/\ell)} \zeta_q\Big) \pmod{\mathfrak{l}}_p
	\end{align*}
	The claim now follows immediately from the definition of the system $(\varepsilon_r)_r$.
	
	Finally, we may calculate
	\begin{align*}
	    \varepsilon_{\eta, n,1} = N_{L(\mathfrak{f}_E)/LE}(f(\zeta_d)) = N_{L(\mathfrak{f}_E)/LE}(\overline{\eta}_{L(d)}) = N_{L\QQ(\mathfrak{f}_E)^p/LE}(\overline{\eta}_{L\QQ(d)^p}) = N_{L\QQ(\mathfrak{f}_E)^p/LE}({\eta}_{\QQ(d)^p})
	\end{align*}
	which completes the proof of the Proposition.
\end{proof}

\subsubsection{The proof of Theorem \ref{pre-question-valid-theorem}}\label{rank-one-results-section}

We are now in a position to state and prove the precise version of Theorem \ref{pre-question-valid-theorem}. 

\begin{theorem}\label{question-valid-theorem}
    Question \ref{main-question} has an affirmative answer for the pair $(T,\cK)$.
\end{theorem}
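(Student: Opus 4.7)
The plan is to apply the criterion of Theorem \ref{explicit-conditions-for-conj-theorem} directly. Fix an arbitrary Euler system $\eta \in \ES_1(T,\cK)$ and any field $E \in \Omega(\cK'/K)$; the goal is to verify both the $\mu$-invariant vanishing hypothesis and the Iwasawa-theoretic divisibility statement (\ref{iwasawa-theoretic-divisibilities}) for this data, at which point Theorem \ref{explicit-conditions-for-conj-theorem} will give $\eta \in \ES^b(T,\cK)$.

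For the $\mu$-invariant vanishing condition (\ref{mu-invariant-condition}), I would argue as follows. An analogue of Lemma \ref{Gm-lemma} applied to the Kummer dual $T^*(1)$ in place of $T$ places $H^2_{\Iw}(\cO_{E,S(E)}, T^*(1))$ inside a short exact sequence whose outer terms are a $\psi$-isotypic component of an inverse limit of ideal class groups of $LE_n$ and a submodule of $Y^{\overline{\infty}}_{E,S(E)\setminus S_p(K)}$ that is finitely generated over $\ZZ_p$. The latter has trivially vanishing $\mu$-invariant. For the former, since $\cK'$ is abelian over $\QQ$ and $L$ is abelian over $\QQ$, the composite $LE_\infty$ is abelian over $\QQ$; the Ferrero--Washington theorem (cf.\ Remark \ref{mu-vanishing-remark}(1)) then supplies the required $\mu$-invariant vanishing. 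Since $\chi$ has order coprime to $p$, the $e_\chi$-component inherits the vanishing.

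The second step is to establish the divisibility (\ref{iwasawa-theoretic-divisibilities}) for each character class $\chi \in I_E$ on which $(\eta_{E_n})_n$ is non-trivial. Combining Lemma \ref{Gm-lemma} with the exact sequence (\ref{H2Iw-exact-sequence}) and the $\mu$-vanishing established in the previous step, this divisibility reduces to a comparison of characteristic ideals between the $\chi$-part of the class group limit $A_E^\infty$ and the $\chi$-part of the quotient $U_E^\infty/\langle(\eta_{E_n})_n\rangle$, up to error terms supported at primes in $S(E)\setminus S_p(K)$. The hypothesis that no $p$-adic place of $\QQ$ splits completely or ramifies in $L$ ensures these error terms are invertible in the relevant localisations. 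What remains is then precisely the classical Iwasawa-theoretic bound on the $\chi$-component of the $p$-class group produced by any non-zero rank-one Euler system for the multiplicative group over an abelian extension of $\QQ$, as established by Kolyvagin's derivative method and implemented in Rubin's theorem (see, for example, \cite[Thm.~2.3.2]{rubin2}).

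The main obstacle is the careful bookkeeping needed to translate Rubin's classical divisibility, which is usually stated modulo torsion and at finite level, into the precise characteristic ideal statement of Theorem \ref{explicit-conditions-for-conj-theorem}(3) at the Iwasawa level. This requires controlling the local Euler factors, the action of the transition maps $\nu^{\infty}_{E'/E}$ on the various character components, and the comparison between image ideals and Fitting ideals (via Lemma \ref{image-containment-lemma} and \cite[Lem.~A.2(ii)]{bs}), before exploiting that $\psi$ has order coprime to $p$ to descend cleanly to each $e_\chi$-part. The hypotheses on $\psi$ and on the behaviour of $p$-adic places in $L$ are exactly what is needed to make this descent work, and should enable an essentially formal argument once the classical bound is in hand.
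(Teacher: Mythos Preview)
Your overall strategy matches the paper's: reduce via Theorem \ref{explicit-conditions-for-conj-theorem}, handle the $\mu$-vanishing by Ferrero--Washington, and then invoke classical Euler-system divisibilities. However, there is a genuine gap in your final step that the paper's proof addresses explicitly and that your proposal does not.

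The issue is that an \emph{arbitrary} element $\eta \in \ES_1(T,\cK)$, when restricted to the relevant tower of ray class fields, need not satisfy the full list of axioms required by the classical machinery you invoke. In particular, Rubin's and Greither's arguments use not only the corestriction relation $(\mathrm{ES}_3)$ but also a congruence condition $(\mathrm{ES}_4)$: for a prime $\ell \mid r$ one needs $\varepsilon_r \equiv \varepsilon_{r/\ell}$ modulo every prime above $\ell$. For the cyclotomic Euler system this is automatic, but for a general $\eta$ as defined in this paper one only obtains the weaker $p$-adic congruence $(\mathrm{ES}_4)_p$, where the congruence holds after projecting the residue field to its $p$-primary component. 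The paper's proof isolates this point and then checks that in Greither's argument the condition $(\mathrm{ES}_4)$ enters only through the map $\varphi_\ell$ of \cite[Lem.~2.3]{rubin-appendix}, which (because $M$ is a $p$-power) factors through exactly that $p$-primary projection; hence $(\mathrm{ES}_4)_p$ suffices. Without this observation you cannot simply cite ``Rubin's theorem'' for an arbitrary $\eta$, so this step needs to be made explicit.

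A secondary imprecision: the local Euler factors coming from $Y^{\overline{\infty}}_{E,S(E)\setminus S_p(K)}$ are not ``invertible in the relevant localisations'' as you write. Rather, the paper shows that after passing to the field $E_\chi$ cut out by $\chi_\fp$ one has
\[
\Fitt^0_{\bLambda_{E,\fp}}\!\bigl(Y^{\overline{\infty}}_{E,S(E)\setminus S_p(K),\fp}\bigr)
= \Fitt^0_{\bLambda_{E,\fp}}\!\bigl(\langle(\eta_{E_\chi,n})_n\rangle/\langle(\eta_{E_n})_n\rangle\bigr)
= \Bigl(\textstyle\prod_{v \in S(E)\setminus S(E_\chi)} (1-\Fr_v^{-1})\Bigr)\bLambda_{E,\fp},
\]
so the factor appears on both sides of (\ref{gm-lemma-inclusion}) and cancels. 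This is what reduces the problem to the clean divisibility (\ref{final-divisibility}) over $\bLambda_{E,\chi}$, and it uses the Euler-system relation for $\eta$ together with the observation that $N_{E/E_\chi}$ is a unit in $\bLambda_{E,\fp}$, not the hypothesis on $p$-adic places.
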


\begin{proof}
    By Theorem \ref{explicit-conditions-for-conj-theorem} and Remark \ref{mu-vanishing-remark} it suffices to show that for every $\eta \in \ES_1(T,\cK)$, $E \in \Omega(\cK'/K)$ and regular prime $\fp$ of $\bLambda_{E,\fp}$ such that $\braket{(\eta_{E_n})_n}_{\bLambda_{E,\fp}}$ is non-trivial the inclusion (\ref{gm-lemma-inclusion}) of Lemma \ref{Gm-lemma} holds.

    By class field theory we are free to assume that $E$ is the maximal $p$-extension inside $\QQ(m)$ where $m$ is a natural number not divisible by the finite primes in $S$ and not congruent to 2 modulo 4.
    
    To verify the relevant inclusion we first claim that 
    \begin{align*}
        \Fitt^0_{\bLambda_{E,\fp}}\left(\frac{\braket{(\eta_{E_{\chi,n}})_n}}{\braket{(\eta_{E_n})_n)}}\right) = \Fitt_{\bLambda_{E,\fp}}^0(Y_{E,S(E)\setminus S_p(K), \fp}^{{\infty}}) = \left(\prod_{v \in S(E)\setminus S(E_\chi)} 1-\Fr_v^{-1}\right)\bLambda_{E,\fp}
    \end{align*}
    where $E_\chi$ is the maximal $p$-extension inside $\QQ(d)$ where $d$ is such that the character $\chi = \chi_\fp$ corresponding to $\fp$ has conductor either $d$ or $dp$. Indeed, first note that for any prime $q$ dividing the conductor of $E$ we have isomorphisms of $\bLambda_{E}$-modules
    \begin{align*}
        Y_{E,\set{q}}^{{\infty}} \cong \Ind_{D_q}^{\Gal(E_\infty/K)}(\cR) \cong \cR[\Gal(E_\infty/K)/D_q]
    \end{align*}
    where $D_q$ is the decomposition subgroup of $q$ inside $\Gal(E_\infty/K)$. Now, if $I_q$ denotes the inertia subgroup inside $D_q$ then $q$ divides the conductor of $E_\chi$ if and only if $\chi|_{I_q} \neq 1$ whence $Y_{E, \set{q}, \fp}^{{\infty}} = 0$. Conversely, suppose that $\chi|_{I_q} = 1$. We have an exact sequence
    \begin{align*}
        0 \to \bLambda_{E} \xrightarrow{1-\Fr_q^{-1}} \bLambda_E \to \cR[\Gal(E_\infty/K)/D_q] \to 0
    \end{align*}
    Localising at $\fp$ we then obtain
    \begin{align*}
        0 \to \bLambda_{E,\fp} \xrightarrow{1-\Fr_q^{-1}} \bLambda_{E,\fp} \to Y_{E,\set{q}, \fp}^{{\infty}} \to 0
    \end{align*}
    This establishes the latter of the two claimed equalities. As for the first, let $\cH := \Gal(E/E_\chi)$ and observe that the norm element $N_{\cH} = \sum_{\sigma \in \cH} \sigma$ is a unit in $\bLambda_{E,\fp}$. We then have by the Euler system relation an equality
    \begin{align*}
        (\eta_{E_n})_n = N_{E/E_\chi}^{-1}\cdot \left(\prod_{v \in S(E)\setminus S(E_\chi)} 1-\Fr_v^{-1}\right)\cdot (\eta_{E_\chi,n})_n
    \end{align*}
    The claim then follows by combining this equality with the fact that the module $\braket{(\eta_{E_\chi,n})_n}$ is $\bLambda_{E}$-torsion-free.
    
    Given the above discussion we are reduced to verifying that there is an inclusion
    \begin{align}
        \Fitt^0_{\bLambda_{E,\fp}}((U_{E}^\infty/(\eta_{E_{\chi,n}})_n)_\fp) \subseteq \Fitt^0_{\bLambda_{E,\fp}}(A_\fp^\infty)\label{Gm-rank-one-inclusion}
    \end{align}
    Since the localisation map $\bLambda_E \to \bLambda_{E,\fp}$ factors through the map $\bLambda_E \to \bLambda_{E,\chi}$ induced by $\chi$ we are further reduced to verifying the divisibility of characteristic ideals
    \begin{align}\label{final-divisibility}
        \mathrm{char}_{\bLambda_{E,\chi}}\left(A_E^{\infty,\chi}\right) \large\mid \mathrm{char}_{\bLambda_{E,\chi}}\left(\frac{U_E^{\infty,\chi}}{(\eta_{E_\chi,n})_n^\chi}\right)
    \end{align}

   At this stage we seek to apply the argument of Greither used to prove the main conjecture \cite[Th. 3.1]{greither} for abelian fields.
 
    To do this we resume the notations of \S\ref{interlude-section} and fix $n \in \NN$. Then by Proposition \ref{euler-kolyvagin-proposition} there exists a $p$-adic Euler-Kolyvagin $(\varepsilon_{\eta, dp^n, r})_{r \in \cJ}$ system for the field $LE_n$ with the property that
    \begin{align*}
        \varepsilon_1 = \eta_{\QQ(dp^n)^p} = \eta_{E_{\chi,n}}
    \end{align*}
    
    This system may fail to be an Euler system in the strict sense considered by Rubin in \cite{rubin-appendix} (and thus Greither in \cite{greither}). In particular, an Euler system in \textit{loc. cit.} is required to satisfy the property $(\ES_4)$ which is, \textit{a priori}, stronger than $(\ES_4)_p$ where the congruence is required to hold without projecting to the maximal $p$-primary component of the appropriate residue field.
    
    However, the only occurrence of the condition $(\ES_4)$ during the course of the proof of \cite[Th. 3.1]{greither} is in the proof of \cite[Prop. 2.4]{rubin-appendix} which uses the map $\varphi_\ell$ constructed in \cite[Lem. 2.3]{rubin-appendix}. Since $M$ is taken to be a power of $p$, the map $\varphi_\ell$ factors through the projection of $(\cO_{F}/\ell\cO_{F})^\times$ to its $p$-primary component. As such, it suffices to replace $(\ES_4)$ in the proof of \cite[Th. 3.1]{greither} by the weaker condition $(\ES_4)_p$.
    
    Given these observations, we may now apply the argument used to prove \cite[Th. 3.1]{greither} to the aforementioned $p$-adic Euler-Kolyvagin systems arising from the Euler system $\eta$ in order to deduce the desired divisibility (\ref{final-divisibility}).
\end{proof}

\begin{remark}
    It is natural to ask if one can remove the assumption that $p$ does not split completely in $L$. As observed above, this condition ensures that $(\ES_2)$ is always satisfied for the classical $p$-adic Euler systems arising from elements of $\ES_1(T,\cK)$.
    
    Define the module $\ES_{1}^{\mathrm{str}}(T,\cK)$ of `strict' Euler systems to be the subset of $\ES_1(T,\cK)$ comprising those systems $\eta$ with the property that for every $E \in \Omega(\cK/K)$ one has that $\eta_E \in U_E$. Then the proof of Theorem \ref{question-valid-theorem} makes it clear that if $p$ is allowed to split completely in $L$ then one at least has the equality $\ES_1^{\mathrm{str}}(T,\cK) = \ES^b(T,\cK)$ (the backwards inclusion here follows from the argument of Corollary \ref{question-valid-corollary} below and the fact that $\eta^\mathrm{cyc} \in \ES_1^\mathrm{str}(T,\cK)$).
    
    As such, the validity of Question \ref{main-question} in this situation is reduced to the question of whether the equality $\ES_1(T,\cK) = \ES_1^{\mathrm{str}}(T,\cK)$ holds. In this regard we note that by Proposition \ref{evidence-theorem} the condition $\eta_E \in U_E$ holds for any field $E$ satisfying standard hypotheses for the representation $T$. Given this we believe it reasonable to conjecture that this equality always holds.
\end{remark}

We can now state and prove the precise version of Corollary \ref{pre-question-valid-corollary}. To do this we write $\eta^\cyc \in \ES_1(\ZZ_p(1),L\cL/\QQ)$ for the usual Euler system of cyclotomic units. That is to say, the system for which at each $E \in \Omega(L\cL/\QQ)$ one has $\eta_E = N_{\QQ(\mathfrak{f}_E)/E}(1-\zeta_{\mathfrak{f}_E})$. We also continue to use the notation established in the proofs of Proposition \ref{euler-kolyvagin-proposition} and Theorem \ref{question-valid-theorem}.

\begin{corollary}\label{question-valid-corollary}
    $\ES_1(T,\cK)$ is a free $\cR[[\Gal(\cK/K)]]$-module of rank one, with a basis given by $c = \Psi(\eta^\cyc)$.
\end{corollary}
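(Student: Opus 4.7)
The plan is to combine Theorem \ref{question-valid-theorem} with the classical Iwasawa Main Conjecture for abelian extensions of $\QQ$ (a consequence of the eTNC proved by Burns-Greither \cite{bg}) and the Ferrero-Washington theorem on vanishing of $\mu$-invariants.

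First, Theorem \ref{question-valid-theorem} yields the equality $\ES_1(T,\cK) = \ES^b(T,\cK)$. The classical construction of cyclotomic units gives $\eta^{\mathrm{cyc}} \in \ES_1(T,\cK)$, so $\eta^{\mathrm{cyc}} \in \ES^b(T,\cK)$. By Lemma \ref{vertical-systems-reinterpretation-proposition} the map $\Theta^\infty_{T,\cK}$ is injective, so $\eta^{\mathrm{cyc}}$ admits a unique preimage $z^{\mathrm{cyc}} \in \VS_\Iw(T,\cK)$. The corollary therefore reduces to verifying that $z^{\mathrm{cyc}}$ generates $\VS_\Iw(T,\cK)$ as an $\cR[[\Gal(\cK/K)]]$-module.

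To establish this, I would invoke the \emph{basis-if-and-only-if-equality} criterion from Theorem \ref{explicit-conditions-for-conj-theorem}: granted the relevant $\mu$-vanishing hypotheses, it suffices to verify, for every $E \in \Omega(\cK'/K)$ and character $\chi \in I_E$ such that $e_\chi \eta^{\mathrm{cyc}}_{E_n}$ is eventually non-trivial, the equality of characteristic ideals
\begin{equation*}
    \mathrm{char}_{\bLambda_{E,\chi}}\!\left(H^2_\Iw(\cO_{E,S(E)}, T^*(1))^\chi\right) = \mathrm{char}_{\bLambda_{E,\chi}}\!\left(\frac{\bigl(\varprojlim_n \bidual_{\cR[\cG_{E_n}]}^1 H^1(\cO_{E_n,S(E_n)}, T^*(1))\bigr)^\chi}{\langle(\eta^{\mathrm{cyc}}_{E_n})_n\rangle^\chi}\right).
\end{equation*}
Via the exact sequence \eqref{H2Iw-exact-sequence} and the analysis of $Y_{E,S(E)\setminus S_p(K)}^{\overline{\infty}}$ already carried out in the proof of Theorem \ref{question-valid-theorem}, this unravels (after removing the harmless contribution from primes dividing the conductor of $LE$) to an equality between the characteristic ideals of $A_E^{\infty,\chi}$ and $U_E^{\infty,\chi}/\langle(\eta^{\mathrm{cyc}}_{E_\chi,n})_n\rangle^\chi$.

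This equality is precisely the classical (or equivariant) Iwasawa Main Conjecture for the character $\psi\chi$ over $\QQ$, which is originally due to Mazur-Wiles and is subsumed by the Burns-Greither proof of the eTNC in \cite{bg}. The required $\mu$-vanishing for $H^2_\Iw(\cO_{E,S(E)}, T^*(1))^\chi$ is provided by the Ferrero-Washington theorem (via the exact sequence \eqref{H2Iw-exact-sequence}, as recorded in Remark \ref{mu-vanishing-remark}(1)), and the $\mu$-vanishing of the quotient module follows from the same result combined with the classical fact that cyclotomic units have $p$-adically bounded index in the full unit group along the cyclotomic tower. The main technical subtlety is simply checking that the character-by-character reduction to the non-equivariant Main Conjecture is genuinely compatible with the Iwasawa-theoretic framework of \S\ref{iwasawa-theory-section}; this is routine given the formalism already developed there.
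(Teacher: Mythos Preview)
Your approach is correct but genuinely different from the paper's. You invoke the basis criterion of Theorem \ref{explicit-conditions-for-conj-theorem}, reducing to the character-by-character Iwasawa Main Conjecture together with two $\mu$-vanishing hypotheses, all of which are consequences of Ferrero--Washington and Mazur--Wiles/Burns--Greither. The paper instead works directly at the level of vertical determinantal systems: it uses the eTNC to produce, for each $E$, an explicit basis $\mathfrak{z}_E$ of $\Det_{\ZZ[\cG_{LE}]}(C_{LE,\cW})$, then appeals to results of \cite{bdss} to identify the image of $\mathfrak{z}_E$ under the projection map with the cyclotomic unit, and finally checks that the $\psi$-components $(\mathfrak{z}_E^\psi)_E$ form a coherent element of $\VS(T,\cK)$. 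Your route exploits the machinery of \S\ref{iwasawa-theory-section} more fully and avoids importing the explicit computations from \cite{bdss}; the paper's route is more constructive and sidesteps having to verify the second $\mu$-vanishing hypothesis on the quotient module. One small point you should make explicit: the basis criterion only imposes conditions at characters $\chi$ where $e_\chi(\eta^{\mathrm{cyc}}_{E_n})_n$ is non-trivial, so you implicitly need that cyclotomic units are non-vanishing at every character of $\cG_E$, which follows from the non-vanishing of $L(1,\psi\chi)$ for non-trivial Dirichlet characters.
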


\begin{proof}
    At the outset we fix a field $E \in \Omega(\cK'/\QQ)$ and write $C_{E_n}$ for the group of  cyclotomic units of the field $LE_n$. Explicitly, this is given by the intersection
    \begin{align*}
        C_{E_n} := \set{N_{\QQ(\mathfrak{f}_{LE_n})/LE_n}(1-\zeta_d) | d \text{ divides } \mathfrak{f}_{LE_n}}_{\ZZ_p[\cG_{LE_n}]} \cap (\cO_{LE_n}^\times \otimes_\ZZ \ZZ_p)
    \end{align*}
    We then write $C_{E}^\infty$ for the inverse limit $\varprojlim_{n \in \NN} C_{E_n}^\phi$ taken with respect to the norm maps.
    
    Turning now to the proof of the claim, we note that by Theorem \ref{explicit-conditions-for-conj-theorem} it suffices to show, firstly, that the $\mu_\Lambda$-invariant of the quotient
    \begin{align*}
        \frac{U_{E,S(E)}^\infty}{\braket{(c_{E_n})_n}_{\bLambda_E}}
    \end{align*}
    vanishes and, secondly, that the inclusion (\ref{gm-lemma-inclusion}) is an equality for $\eta = c$.

    The first of these claims follows as an immediate consequence of the main result of Greither in \cite[Appendix]{flach} in combination with the filtration
    \begin{align*}
        \braket{(c_{E_n})_n}_{\bLambda_E} \subseteq C_E^\infty \subseteq U_E^\infty \subseteq U_{E,S(E)}^\infty
    \end{align*}
    As for the second claim, first observe that the proof of Theorem \ref{question-valid-theorem} implies that we just have to verify that (\ref{Gm-rank-one-inclusion}) is an equality in this case.
    We claim, firstly, that the element $c_{E_{\chi,n}}$ generates $C_{\infty, \mathfrak{p}}$. To this end we fix $f \in \NN$ such that $d \mid f \mid m$ and set $\overline{E} = \QQ(f)^p$. Then
    \begin{align*}
        N_{E/E_{\chi}}((c_{\overline{E}_n})_n)= [E:\overline{E}]N_{\overline{E}/E_\chi}((c_{\overline{E}_n})_n) = [E:\overline{E}]\Big(\prod_{v \in S(\overline{E})\setminus S(E_\chi)} 1-\Fr_v^{-1}\Big)((c_{E_{\chi_n}})_n)
    \end{align*}
    Since $N_{E/E_\chi}$ is invertible in $\bLambda_{E,\mathfrak{p}}$ it then follows that $(c_{\overline{E}_n})_n$ is contained in the $\bLambda_{E,\mathfrak{p}}$-module generated by $(c_{E_{\chi_n}})_n$. Now suppose that $d \nmid n$. Then $\chi$ is non-trivial on the group $\Gal(E/\overline{E})$ and so
    \begin{align*}
        ((c_{\overline{E}_n})_n)_\mathfrak{p} = [E:\overline{E}]^{-1}(N_{E/\overline{E}}(c_{\overline{E}_n})_n)_\mathfrak{p} = [E:\overline{E}]^{-1}\chi(N_{E/\overline{E}})((c_{\overline{E}_n})_n)_\mathfrak{p} = 0
    \end{align*}

    The above discussion therefore implies that it suffices to prove the existence of some integer $a$ for which there is an equality of characteristic ideals
    \begin{align*}
        \pi_\chi^a\cdot\mathrm{char}_{\bLambda_{E,\chi}}\left(A_E^{\infty,\chi}\right) = \mathrm{char}_{\bLambda_{E,\chi}}\left(\left(\faktor{U_E^{\infty}}{C_E^\infty}\right)^\chi\right)
    \end{align*}
    where $\pi_\chi$ is a uniformiser of $\QQ(\im(\chi))$. This is now an immediate consequence of Greither's original result \cite[Thm. 3.1]{greither} on the main conjecture.
\end{proof}

\subsubsection{The higher rank setting}

We now focus on the situation where $K$ is an arbitrary totally real field.

We write $\cL = \cL_\psi$ for the maximal subextension of $\cK'$ with the property that every field in $\Omega(\cL K_\infty/K)$ satisfies standard hypotheses for $T$. Note that by Example \ref{standard-hypotheses-example} one knows that the extension $\cL$ is non-trivial.

The exact sequence (\ref{H2Iw-exact-sequence}) implies that for any $E \in \Omega(\cL/K)$, the $\mu$-invariant hypothesis (\ref{mu-invariant-condition}) is equivalent to the vanishing of the $\mu$-invariants of $e_{\chi}\cdot A_E^\infty$ for all $\chi \in I_E$. This has been conjectured to be the case by Iwasawa and we assume this conjecture for all $E \in \Omega(\cL/K)$.

Recall that $T$ is the representation $T = \cR(1) \otimes \psi^{-1}$ defined at the beginning of this section. 

\begin{proposition}\label{divisibilities-valid-higher-rank-theorem}
    Let $\eta \in \ES_r(T,\cK)$ be an Euler system. Then there exists a basic Euler system that agrees with $\eta$ on the extension $\cL K_\infty$.
\end{proposition}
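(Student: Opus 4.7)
The plan is to apply Corollary \ref{iwasawa-inclusion-holds-corollary} directly, since its conclusion is identical to that of the proposition. The only verification required is that the $\mu$-invariant hypothesis (\ref{mu-invariant-condition}) holds for every field $E \in \Omega(\cL/K)$ in the present setting; essentially all the preparatory reductions have been recorded in the paragraphs immediately preceding the proposition statement.

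To carry out this verification, I would invoke the exact sequence (\ref{H2Iw-exact-sequence}), which places $H^2_\Iw(\cO_{E,S(E)}, T^*(1))$ between the class group Iwasawa module $A_E^\infty$ and the decomposition module $Y_{E,S(E)\setminus S_p(K)}^{\overline{\infty}}$. For each character class $\chi \in \overline{I_E}$, the component $e_\chi \cdot Y_{E,S(E)\setminus S_p(K)}^{\overline{\infty}}$ is a finite direct sum of modules of the shape $\cR_\chi[\Gal(E_\infty/K)/D_q]$, each of which is finitely generated over $\cR_\chi[\cG_E]$ (as follows from the standing hypotheses that no $p$-adic place of $K$ splits completely in $L$, together with the fact that each prime in $S(E)\setminus S_p(K)$ has open decomposition subgroup in the cyclotomic $\Gal(E_\infty/K)$). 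In particular, the $\mu$-invariant of each such piece is automatically zero, and the exact sequence reduces the hypothesis (\ref{mu-invariant-condition}) to the statement that $e_\chi \cdot A_E^\infty$ has vanishing $\mu$-invariant for all such $E$ and $\chi$. This reduction is asserted explicitly just before the proposition, and the resulting vanishing is precisely Iwasawa's classical $\mu=0$ conjecture, which we have assumed throughout the subsection.

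With the $\mu$-invariant hypothesis thereby verified, the hypotheses of Corollary \ref{iwasawa-inclusion-holds-corollary} are fulfilled, and the existence of a basic Euler system agreeing with $\eta$ on $\cL K_\infty$ follows at once. There is no genuine obstacle in the proof itself: the substantive input is packaged in Corollary \ref{iwasawa-inclusion-holds-corollary}, which in turn rests on Proposition \ref{iwasawa-inclusion-holds-proposition} (the verification of the Iwasawa-theoretic inclusion (\ref{theorem-inclusion-3}) for fields satisfying standard hypotheses) and on Theorem \ref{explicit-conditions-for-conj-theorem} (the translation of the basic condition into these Iwasawa-theoretic divisibilities). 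The proposition's role is to package this machinery cleanly in the case of the multiplicative group over a totally real field, where the hypothesis amounts to the well-studied conjecture of Iwasawa.
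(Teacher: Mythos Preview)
Your proposal is correct and follows essentially the same approach as the paper: both reduce immediately to Corollary \ref{iwasawa-inclusion-holds-corollary}, with the $\mu$-invariant hypothesis (\ref{mu-invariant-condition}) having been reduced (via the exact sequence (\ref{H2Iw-exact-sequence})) to Iwasawa's conjecture, which is assumed in the paragraph preceding the proposition. The only difference is cosmetic: the paper records the $\mu$-invariant reduction in the setup before the proposition and then gives a one-line proof, whereas you fold that verification (together with the easy observation that the $Y$-term contributes no $\mu$-invariant) into the proof itself.
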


\begin{proof}
    This follows via an immediate application of Corollary \ref{iwasawa-inclusion-holds-corollary}.
\end{proof}

\begin{remark}
    Fix a field $E \in \Omega(\cK/K)$. Then by Example \ref{standard-hypotheses-example} one knows that $E \in \Omega(\cK/K)$ satisfies standard hypotheses if no prime in $S(E)\setminus S_\infty(K)$ splits completely in $L$ (the fixed field of $\psi$). This fact motivates a probabilistic heuristic for the validity of Question \ref{main-question} in this setting.
    
    Indeed, recall that $\cK$ must contain the maximal $p$-subextension of the ray class field modulo $\fq$ for almost every prime $\fq$ of $K$. As such, the analytic density of the set of primes of $K$ ramifying in $\cK$ is one. On the other hand, if we write $|\psi|$ for the order of $\psi$ then by the Chebotarev Density Theorem, the analytic density of the set of primes of $K$ that split completely in $L$ is $[L:K]^{-1} = |\psi|^{-1}$.  In particular the density, in a natural sense, of the set $\Omega(\cL_\psi/K)$ inside $\Omega(\cK'/K)$ is bounded below by $1-|\psi|^{-1}$.
    
    It then follows that as $|\psi|$ approaches infinity, the density of $\Omega(\cL_\psi/K)$ inside $\Omega(\cK'/K)$ approaches one. This in turn implies that as $|\psi|$ approaches infinity, the density of the set of fields in $\Omega(\cK/K)$ upon which a given Euler system agrees with a basic Euler system approaches one.
    
    One can therefore see Question \ref{main-question} in this setting as being `true in the limit $|\psi| \to \infty$'.
\end{remark}

\begin{remark}
    During the preparation of the present manuscript, the article \cite{sakamoto2} of Sakamoto appeared online in which the author also considers the question of to what extent Euler systems are basic. In particular in \cite[Thm. 1.1]{sakamoto2} Sakamoto shows that Question \ref{main-question} has an affirmative answer in the setting of the multiplicative group over totally real fields under the assumed validity of Greenberg's conjecture for every $E \in \Omega(\cK'/K)$.
\end{remark}

\subsection{Elliptic curves and the proof of Theorem \ref{kato-theorem-pre}}
    Let $C/\QQ$ be an elliptic curve and $\QQ_\infty/\QQ$ the cyclotomic $\ZZ_p$-extension of $\QQ$. We denote by $T = T_p(C)$ the $p$-adic Tate module of $C$. Suppose that $p > 3$ and
    \begin{enumerate}
        \item[($\mathrm{EH_1}$)]{The image of the representation
        \begin{align*}
            \rho: G_\QQ \to \Aut(T) \cong \mathrm{GL}_2(\ZZ_p)
        \end{align*}
        contains $\mathrm{SL}_2(\ZZ_p)$.}
        \item[($\mathrm{EH_2}$)]{For all $v \in S\setminus S_\infty(\QQ)$ the curve $C(\QQ_v)$ has no points of order $p$.}
        \item[($\mathrm{EH_3}$)]{There exists a finite abelian extension $L/\QQ$, not contained in $\cK$, such that $C(L)[p^\infty] \neq 0$.}
    \end{enumerate}
    By Example \ref{standard-hypotheses-example}, the Hypotheses $(\mathrm{EH_1})$ and $(\mathrm{EH_2})$ imply that $\QQ$ satisfies standard hypotheses for the representation $T$. In this situation one also knows that the weak Leopoldt conjecture holds for all $E \in \Omega(\cK'/\QQ)$ by the result recalled in Remark \ref{weak-leopolodt-remark}.
    
    We can now state and prove the precise version of Theorem \ref{kato-theorem-pre}.
    
    \begin{theorem}\label{kato-theorem}
        Assume that $p > 3$ and assume to be given an elliptic curve $C/\QQ$ that satisfies the hypotheses $\mathrm{(EH_1)}$ through $\mathrm{(EH_3)}$. Then for every $\eta \in \ES_1(T,\cK)$ there exists a basic Euler system  that agrees with $\eta$ on the extension $\QQ_\infty/\QQ$.
    \end{theorem}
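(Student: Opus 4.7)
The plan is to reduce the claim to an application of Theorem \ref{explicit-conditions-for-conj-theorem} at the single field $E=\QQ \in \Omega(\cK'/\QQ)$, and then to lift the resulting determinantal element to a full vertical system. First, hypotheses $(\mathrm{EH_1})$ and $(\mathrm{EH_2})$ together place us in the second setting of Example \ref{standard-hypotheses-example}, so $\QQ$ satisfies standard hypotheses for $T$. Proposition \ref{iwasawa-inclusion-holds-proposition} applied at $E=\QQ$ then establishes the inclusion (\ref{theorem-inclusion-3}) at every regular prime $\fp$ of $\bLambda_\QQ$ for which $\langle(\eta_{\QQ_n})_n\rangle_\fp$ is non-trivial. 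This disposes of the regular prime portion of condition $(2)$ of Theorem \ref{explicit-conditions-for-conj-theorem}.

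To invoke the full force of Theorem \ref{explicit-conditions-for-conj-theorem} at $E=\QQ$ I still need to verify the $\mu$-invariant vanishing for the $R[[\Gal(\QQ_\infty/\QQ)]]$-module $H^2_\Iw(\cO_{\QQ,S}, T^*(1))$. This is where hypothesis $(\mathrm{EH_3})$ enters: it provides a finite abelian extension $L/\QQ$, outside $\cK$, such that $C(L)[p^\infty]\neq 0$, so a non-trivial $p$-power torsion point is rational over $L$. By Ferrero-Washington, Iwasawa's classical $\mu=0$ conjecture holds along the cyclotomic $\ZZ_p$-extension of the abelian field $L$. Combining this with Theorem 3.1 of Lim, as recalled in Remark \ref{mu-vanishing-remark}(3), yields the vanishing of the $\mu$-invariant of (the Pontryagin dual of) the fine Selmer group of $T^*(1)$ over $\QQ_\infty$, which by Lemma 3.4 of \emph{loc. cit.} is exactly the desired $\mu$-vanishing for $H^2_\Iw(\cO_{\QQ,S}, T^*(1))$. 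The role of $(\mathrm{EH_3})$ is precisely to produce the distinguished abelian field $F$ required by Lim's theorem: the $G_L$-invariance of a $p$-power torsion point supplies the filtration on the residual representation that allows the descent from the elliptic setting to the class-group setting where Ferrero-Washington applies.

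With both the regular-prime inclusion and the $\mu$-vanishing in hand, Theorem \ref{explicit-conditions-for-conj-theorem} (applied only at $E=\QQ$, rather than for all of $\Omega(\cK'/\QQ)$) shows that
\begin{align*}
    \cL_{\eta,\QQ} \in \Det_{\bLambda_\QQ}(C_{\QQ_\infty, S(\QQ)}(T)).
\end{align*}
Because each transition map $\nu_{E'/E}^\infty$ in the definition of $\VS_\Iw(T,\cK)$ is surjective, the canonical projection
\begin{align*}
    \VS_\Iw(T,\cK) \twoheadrightarrow \Det_{\bLambda_\QQ}(C_{\QQ_\infty, S(\QQ)}(T))
\end{align*}
is surjective, and I may lift $\cL_{\eta,\QQ}$ to some $z \in \VS_\Iw(T,\cK)$. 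The basic Euler system $c := \Theta_{T,\cK}^\infty(z) \in \ES^b(T,\cK)$ then satisfies, by construction and the definition of $\cL_{\eta,\QQ}$, the equality $c_{\QQ_n} = \eta_{\QQ_n}$ for every $n\in\NN$, so $c$ agrees with $\eta$ along $\QQ_\infty/\QQ$ as required.

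The main obstacle I anticipate is the $\mu$-invariant step: one must verify that the hypotheses of Lim's Theorem 3.1 are genuinely met by the auxiliary field produced by $(\mathrm{EH_3})$, so that Ferrero-Washington's result over $L$ propagates to Lim's conjecture for $T^*(1)$ along $\QQ_\infty$. The rest of the argument, by contrast, is essentially a specialisation of the general machinery of Section \ref{iwasawa-theory-section} to a single field and is largely formal.
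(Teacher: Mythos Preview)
Your overall strategy matches the paper's exactly: the paper simply invokes Corollary \ref{iwasawa-inclusion-holds-corollary} (which packages precisely the sequence Proposition \ref{iwasawa-inclusion-holds-proposition} $\Rightarrow$ Theorem \ref{explicit-conditions-for-conj-theorem} $\Rightarrow$ lift via surjectivity of $\nu^\infty_{E'/E}$ that you spell out), so the only substantive task is the verification of the $\mu$-vanishing hypothesis (\ref{mu-invariant-condition}) at $E=\QQ$.

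There is, however, a genuine gap in how you carry out that step. Lim's Theorem 3.1, as summarised in Remark \ref{mu-vanishing-remark}(3), requires Iwasawa's classical $\mu=0$ for a \emph{particular} finite extension $F$ of $E$ determined by the representation (essentially the splitting field of the residual representation), not for an arbitrary abelian field in which a single $p$-torsion point becomes rational. The field $L$ supplied by $(\mathrm{EH_3})$ has no reason to contain $\QQ(C[p])$, and conversely $\QQ(C[p])$ is typically non-abelian over $\QQ$ (its Galois group embeds in $\mathrm{GL}_2(\mathbb{F}_p)$, and under $(\mathrm{EH_1})$ it is in fact all of $\mathrm{GL}_2(\mathbb{F}_p)$), so Ferrero--Washington does not apply to it. Hence your route through Lim does not close.

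The paper instead appeals to \cite[Cor.~3.6]{coates-sujatha}, whose hypothesis is exactly that $C(L)[p^\infty]\neq 0$: for such $L$, Conjecture A for $C$ over $L_\infty$ is equivalent to Iwasawa's classical $\mu=0$ for $L_\infty/L$, and the latter holds by Ferrero--Washington since $L/\QQ$ is abelian. This yields the vanishing of the $\mu$-invariant of $H^2_\Iw(\cO_{L,S(L)},T^*(1))$, from which the vanishing over $\QQ_\infty$ follows by a straightforward descent (the corestriction $H^2_\Iw(\cO_{L,S(L)},T^*(1)) \to H^2_\Iw(\cO_{\QQ,S(\QQ)},T^*(1))$ is surjective up to controlled cokernel). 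So your intuition that $(\mathrm{EH_3})$ plus Ferrero--Washington is the engine is correct; only the bridge result you cite needs to be replaced by the Coates--Sujatha corollary, which is tailored precisely to the datum $(\mathrm{EH_3})$ provides.
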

    
    \begin{proof}
        In this situation $r = 1$ and so the Theorem will follow from an immediate application of Corollary \ref{iwasawa-inclusion-holds-corollary} once we verify that the $\mu$-invariant condition (\ref{mu-invariant-condition}) holds.
        
        To do this, first note that vanishing of the $\mu$-invariant of $H^2_\Iw(\cO_{E,S(E)}, T)$ holds if and only if the same is true of that of the dual of the fine Selmer group over $E_\infty$ for any abelian extension $E/\QQ$ by the facts recalled in Remark \ref{mu-vanishing-remark}(iii).
        
        As such, the $\mu$-invariant of $H^2_{\Iw}(\cO_{L,S(L)}, T)$ vanishes by the assumed validity of $(\mathrm{EH}_3)$ and \cite[Cor. 3.6]{coates-sujatha}. It is then straightforward to see that this implies that the $\mu$-invariant of $H^2_\Iw(\cO_{\QQ,S(\QQ)}, T)$ also vanishes as is required to complete the proof of the Theorem.
    \end{proof}
    
    \begin{remark}
        Fouquet and Wan have recently announced a proof of the full Kato Main Conjecture under the same `big image' hypothesis as Kato's divisibility proof in \cite{kato3}. Let $\mathfrak{z}_\infty$ be the element of $Q(\bLambda_\QQ) \otimes_{\bLambda_\QQ} \Det_{\bLambda_\QQ}(C_{\QQ_\infty, S(\QQ)}(T))$ defined in \cite[\S7.1]{bkse}. Since the big image hypothesis is implied by $\mathrm{(EH_1)}$, their result would imply that $\mathfrak{z}_\infty$ is a $\bLambda_\QQ$-basis of $\Det_{\bLambda_\QQ}(C_{\QQ_\infty, S(\QQ)}(T))$. A strengthening of the argument of Corollary \ref{iwasawa-inclusion-holds-proposition} then yields a basic Euler system $c$ that agrees with Kato's Euler system $z^{\mathrm{kato}}$ on the extension $\QQ_\infty/\QQ$ and has the property that for every $E \in \Omega(\cK/K)$ and character $\chi \in \widehat{\cG_E}$ one has that $c_E^\chi \neq 1$ if and only if $(z^\mathrm{kato}_E)^\chi \neq 1$.
    \end{remark}

\appendix
\section*{Appendices}
\renewcommand{\thesubsection}{\Alph{subsection}}

\subsection{Results on \'etale cohomology complexes}\label{cohomology-appendix}

For the convenience of the reader we gather in this appendix various well-known facts about the \'etale cohomology complexes that we have employed in this article. To do this we first recall the definition of the so-called `$\Sigma$-modified' cohomology complexes. We remark that $\Sigma$-modification is only relevant for Appendix \ref{weak-pairs-appendix} and so for the purpose of reading the main body of this article the reader is invited to ignore any adornments by $\Sigma$.

Fix $E \in \Omega(\cK/K)$ and suppose to be given a finite set $U$ of places of $K$ containing $S(E)$ along with a finite set of places $\Sigma$ of $K$ disjoint from $U$. We define the `$\Sigma$-modifed' cohomology complex $\R\Gamma_\Sigma(\cO_{E,U},T)$ to be the mapping fibre in $\Der(\cR[\cG_E])$ of the natural localisation morphism
\begin{align}\label{sigma-modified-definition}
    \R\Gamma(\cO_{E,U}, T) \to \bigoplus_{w \in \Sigma_F} \R\Gamma_f(E_w, T)
\end{align}
Given this, one then defines the `$\Sigma$-modifed' compactly supported cohomology complex
\begin{align*}
    \R\Gamma_{c, \Sigma}(\cO_{E,U}, T) := \R\Hom_{\cR}(\R\Gamma_{\Sigma}(\cO_{E,U}, T^*(1)), \cR)[-3] \oplus \left(\bigoplus_{w \in S_\infty(E)} H^0(E_w, T)\right)[-1]
\end{align*}
To ease notation in the sequel, we then denote
\begin{align*}
    C_{E,U, \Sigma}(T) := \R\Hom_{\cR}(\R\Gamma_{c,\Sigma}(\cO_{E,U}, T^*(1)), \cR)[-2]
\end{align*}
The properties of use to us in this article are then summarised in the following Proposition:

\begin{aproposition}Fix a field $E \in \Omega(\cK/K)$, $U$ a finite set of places of $K$ containing $S(E)$ and $\Sigma$ a finite set of places of $K$ disjoint from $U$. If $H^0(E,T) = 0$ then
    \begin{enumerate}
        \item{$C_{E,U,\Sigma}(T)$ is a perfect complex of $\cR[\cG_E]$-modules and is acyclic outside degrees zero and one. Moreover, the Euler-Poincar\'e characteristic of the complex $Q \otimes_R C_{E,U,\Sigma}(T)$ vanishes.}
        \item{There is a canonical isomorphism $H^0(C_{E,U,\Sigma}(T)) \cong H^1_{\Sigma}(\cO_{E,U}, T)$ and a split exact sequence
            \begin{align}
                0 \to H^2_\Sigma(\cO_{E,U}, T) \to H^1(C_{E,U,\Sigma}(T)) \to Y_K(T)^* \otimes_{\cR} \cR[\cG_E] \to 0\label{complex-cohomology-sequence}
            \end{align}
            in which the injection is canonical and the surjection is dependent on a choice of a set of representatives of the $\Gal(\cK/K)$-orbits on $S_\infty(\cK)$.
        }
        \item{For any finite set of places $\Sigma'$ of $K$ containing $\Sigma$ and disjoint from $U$ one has an exact triangle
            \begin{align}
                C_{E,U,\Sigma'}(T) \to C_{E,U, \Sigma}(T) \to \bigoplus_{w \in (\Sigma'\setminus \Sigma)_E} \R\Gamma_f(E_w, T)[1]\label{change-sigma-triangle}
            \end{align}
        in $\Der^p(\cR[\cG_E])$.
        }
        \item{For any set of places $U'$ of $K$ containing $U$ and disjoint from $\Sigma$ one has an exact triangle
            \begin{align}
                C_{E,U,\Sigma}(T) \to C_{E,U', \Sigma}(T) \to \bigoplus_{w \in (U'\setminus U)_E} \R\Gamma_f(E_w, T^*(1))^*[-1]\label{change-U-triangle}
            \end{align}
        in $\Der^p(\cR[\cG_E])$.
        }
        \item{For any pair of fields $E \subseteq E'$ in $\Omega(\cK/K)$ such that $U$ contains $S(E')$, there is a natural codescent isomorphism
            \begin{align}
                C_{E', U, \Sigma}(T) \otimes_{\cR[\cG_{E'}]} \cR[\cG_E] \cong C_{E,U, \Sigma}(T)\label{codescent-iso}
            \end{align}
        in $\Der^p(\cR[\cG_E])$.
        }
    \end{enumerate}

    \begin{proof}
        Part (1) and (2) of the Proposition are exactly \cite[Prop. 2.21]{bs}. Part (4) is Subsection 2.3.2 of \textit{loc. cit.} and Part (5) is \cite[Prop. 1.6.5(iii)]{fukaya-kato}. As for Part (3), the desired exact triangle follows from a straightforward calculation involving combining Artin-Verdier duality (and noting that $p$ is odd) in the form \cite[(6)]{bf} with  the triangle of definition of the $\Sigma$-modified cohomology complex for $T$.
    \end{proof}
\end{aproposition}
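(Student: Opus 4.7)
The plan is to treat the five claims essentially independently but in an order that lets later parts bootstrap from earlier ones. Claim (1) comes first. I would note that $\R\Gamma(\cO_{E,U}, T)$ is perfect in $\Der^p(\cR[\cG_E])$ and concentrated in degrees $[0,2]$, that each local complex $\R\Gamma_f(E_w, T)$ is perfect and concentrated in degrees $[0,1]$ by the resolution (\ref{finite-support-resolution}), and that the localisation morphism (\ref{sigma-modified-definition}) therefore has perfect mapping fibre $\R\Gamma_\Sigma(\cO_{E,U}, T)$. Applying $\R\Hom_\cR(-,\cR)[-3]$ to the analogous complex for $T^*(1)$, and then taking the further dual shifted by $[-2]$ in the definition of $C_{E,U,\Sigma}(T)$, preserves perfectness at every step. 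The cohomological amplitude $[0,1]$ of $C_{E,U,\Sigma}(T)$ will then follow by directly examining which cohomology groups can contribute under these dualisations, using that $p$ is odd to discard $2$-torsion at real archimedean places. The rational vanishing of the Euler characteristic follows from the global Euler characteristic formula for $\R\Gamma(\cO_{E,U}, T^*(1))$, together with the observation that the local $\R\Gamma_f$ terms and the archimedean direct summand $\bigoplus_{w \in S_\infty(E)} H^0(E_w, T)[-1]$ contribute cancelling rational ranks.

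Claim (2) will be proved by taking the long exact cohomology sequence attached to the definition of $C_{E,U,\Sigma}(T)$ and combining it with Artin-Verdier duality. The canonical identification $H^0(C_{E,U,\Sigma}(T)) \cong H^1_\Sigma(\cO_{E,U}, T^*(1))$ emerges as the degree-$0$ duality pairing, while the sequence (\ref{complex-cohomology-sequence}) arises from the interplay of the archimedean correction term with $H^2_\Sigma(\cO_{E,U}, T^*(1))$ in degree $1$; the quotient $Y_K(T)^* \otimes_\cR \cR[\cG_E]$ is then recognised after choosing a set of representatives for the $\Gal(\cK/K)$-orbits on $S_\infty(\cK)$, which simultaneously produces the splitting of the sequence.

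Claims (3), (4) and (5) are structural. For (4) I would use the standard excision triangle relating $\R\Gamma(\cO_{E,U'}, T)$ and $\R\Gamma(\cO_{E,U}, T)$, with third term $\bigoplus_{w \in (U'\setminus U)_E} \R\Gamma_f(E_w, T)$, and then push this through $\Sigma$-modification and dualisation (the sign of the shift flipping twice ensures the stated form). For (3) I would compare the two mapping fibres defining $\R\Gamma_\Sigma$ and $\R\Gamma_{\Sigma'}$ via the octahedral axiom: the resulting third term is $\bigoplus_{w \in (\Sigma'\setminus\Sigma)_E} \R\Gamma_f(E_w, T^*(1))$, and applying $\R\Hom_\cR(-,\cR)[-3]$ together with Artin-Verdier duality converts this to $\bigoplus_{w \in (\Sigma'\setminus\Sigma)_E} \R\Gamma_f(E_w, T^*(1))[1]$ with the required coefficients and shift. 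For (5) I would appeal to Shapiro's lemma: since $T_{E'} = \Ind_{G_{E'}}^{G_K}(T)$, flat base change along $\cR[\cG_{E'}] \to \cR[\cG_E]$ together with the transitivity of induction supplies the codescent isomorphism, and one checks this descends compatibly through the $\Sigma$-modification and dualisation steps.

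The main obstacle throughout is the careful book-keeping involved in (3): one must correctly track how Artin-Verdier duality (which already imposes a shift by $[-3]$ and switches $T$ with $T^*(1)$), the further $\R\Hom_\cR(-,\cR)[-2]$, and the archimedean correction interact to produce the precise stated shift $[1]$ and coefficient module $T^*(1)$ on the third term. A secondary, milder, difficulty is verifying the splitting in (2) is genuinely canonical up to the choice of archimedean orbit representatives, and does not depend on further auxiliary data.
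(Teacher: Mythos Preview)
Your proposal is correct and aligns with the paper's own treatment: the paper simply cites \cite[Prop.~2.21]{bs} for (1)--(2), \cite[\S2.3.2]{bs} for (4), \cite[Prop.~1.6.5(iii)]{fukaya-kato} for (5), and for (3) sketches exactly what you propose---combine the defining triangle of the $\Sigma$-modified complex for $T^*(1)$ with Artin--Verdier duality (in the form \cite[(6)]{bf}). One small bookkeeping point on (4): since $C_{E,U,\Sigma}(T)$ is built from $\R\Gamma_\Sigma(\cO_{E,U},T^*(1))$ rather than from $\R\Gamma(\cO_{E,U},T)$, your excision triangle should either be taken for $T^*(1)$ and then pushed through the double dual (with local duality converting the local terms back to $\R\Gamma_f(E_w,T)$), or else you must first invoke Artin--Verdier to identify $C_{E,U,\Sigma}(T)$ with something built directly from $T$; this is precisely the kind of sign-and-shift tracking you already flag as the main obstacle.
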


\subsection{Weakly admissible pairs}\label{weak-pairs-appendix}

It is natural to ask if one can weaken the hypotheses levied on the pairs $(T,\cK)$ in order to say something about more general representations. For example, if $K$ is an imaginary quadratic field then hypothesis $\mathrm{(H_3)}$ is never satisfied for the representation $\ZZ_p$ if $\cK$ contains the cyclotomic $\ZZ_p$-extension of $K$ and so the Euler system of elliptic units defined over $K$ do not fit into the theoretical framework underpinning Question \ref{main-question}. In this section we outline how one can weaken the hypotheses on $(T,\cK)$ to incorporate such examples of representations.

Consider the following hypothesis on pairs $(T,\cK)$:

\begin{ahypothesis}\label{weak-T-hypothesis}\text{}
    \begin{enumerate}
        \item[$\mathrm{(H_0)}$]{The $\cR$-module $Y_K(T) := \bigoplus_{v \in S_\infty(K)} H^0(K_v,T^*(1))$ is free of rank $ r \geq 1$.}
        \item[$(\mathrm{H_1})$]{$T$ is ramified at only finitely many primes of $K$.}
        \item[$(\mathrm{H_2})$]{For all $E \in \Omega(\cK/K)$ the invariants module $H^0(E, T)$ vanishes.}
        \item[$(\mathrm{H_3'})$]{There exists a finite set of places $S_f$ of $K$ with the property that for every $v \not\in S_f$, the complex $\R\Gamma_f(K_v, T)$ is acyclic outside degree one and $H^1_f(K_v, T)$ is a cyclic $\cR$-module.}
    \end{enumerate}
\end{ahypothesis}

Throughout this appendix we assume that all pairs $(T,\cK)$ satisfy Hypotheses \ref{weak-T-hypothesis} and \ref{k-hypothesis} and that $S$ contains $S_f$. In this case we shall refer to such pairs as `weakly admissible pairs'.

\begin{aexample}
   Set $T = \ZZ_p(1)$ and fix a field $E \in \Omega(\cK/K)$. In this case the acyclicity condition in $\mathrm{(H_3')}$ is clear from the definition of $T$. Moreover, since for every $v \not\in S(E)$, $\R\Gamma_f(K_v, T)$ is isomorphic in $\Der^p(\ZZ_p[\cG_E])$ to the complex $T \xrightarrow{1-\Fr_v^{-1}} T$, and $T$ is a free $\ZZ_p$-module of rank one, it follows that $H^1_f(K_v, T)$ is cyclic. As such, $(T,\cK)$ is a weakly admissible pair for any appropriate choice of $\cK$.
\end{aexample}

\subsubsection{The ideal $\cA_{T,\cK}$ and the modified question}

Observe that since $T$ does not in general satisfy $\mathrm{(H_3)}$ the homomorphism $\Theta_{T,\cK}$ constructed by Burns and Sano is only a map
\begin{align*}
    \Theta_{T,\cK}: \VS(T,\cK) \to \RES_r(T,\cK)
\end{align*}
We refer to the image of this map as the module of `basic Euler systems' for the weakly admissible pair $(T,\cK)$ and denote it as usual by $\ES^b(T,\cK)$. In this section we shall construct a non-zero ideal $\cA_{T,\cK}$ of $\cR[[\Gal(\cK/K)]]$ with the property that $\cA_{T,\cK}\cdot \ES^b(T,\cK) \subseteq \ES_r(T,\cK)$.

To this end, fix a field $E \in \Omega(\cK/K)$ and define the ideal $\cA_{T,E}$ of $\cR[\cG_E]$ to be
\begin{align*}
    \cA_{T,E} := \Ann_{\cR[\cG_E]}(H^1(\cO_{E,S(E)}, T)_\tor)
\end{align*}
where here $\tor$ indicates the $R$-torsion submodule. We now record the following Lemma which can be seen as an analogue of \cite[Lem. IV.1.1]{tate} for $p$-adic representations:

\begin{alemma}\label{tate-generalised-lemma}
    For each $E \in \Omega(\cK/K)$ the following claims are valid:
    \begin{enumerate}
        \item{Let $U$ be a finite set of places of $K$ not containing $S(E)$. Then there is an equality
            \begin{align*}
                \cA_{T,E} = \Braket{\Ann_{\cR[\cG_E]}(H^1_f(K_v, T_E)) \mid v \not\in U}
            \end{align*}
        }
        \item{For every field $E' \in \Omega(\cK/K)$ containing $E$ the natural projection map $\pi_{E'/E}: \cR[\cG_{E'}] \to \cR[\cG_E]$ sends $\cA_{T,E'}$ onto $\cA_{T,E}$.}
    \end{enumerate}
\end{alemma}

\begin{proof}
    At the outset we note that in order to prove the first claim it suffices, by Shapiro's Lemma, to demonstrate that
    \begin{align}
        \Ann_{\cR[\cG_E]}(H^1(\cO_{K,S(E)}, T_E)_\tor) = \Braket{\Ann_{\cR[\cG_E]}(H^1_f(K_v, T_E)) \mid v \not\in U}\label{desired-annihilator-equality}
    \end{align}
    Moreover, an analysis of the long exact sequence of cohomology of the triangle (\ref{change-U-triangle}) implies that there is an equality $H^1(\cO_{K,U}, T_E)_\tor = H^1(\cO_{K,S(E)}, T_E)_\tor$. As such, we may verify the above equality with the set $S(E)$ replaced by $U$.

   To simplify notation in this proof, we shall write $V = T_E \otimes_R Q$, $K_U$ for the the maximal Galois extension of $K$ unramified outside $U$ and $G_{K,U}$ for its Galois group. The long exact sequence of $G_{K,U}$-cohomology of the tautological exact sequence
    \begin{align}
        0 \to T_E \to V \to V/T_E \to 0\label{tautological-sequence}
    \end{align}
    combines with the assumed validity of $\mathrm{(H_3')}$ to give an identification
    \begin{align*}
        H^1(\cO_{K,U}, T_E)_\tor \cong H^0(\cO_{K,U}, V/T_E) = (V/T_E)^{G_{K,U}}
    \end{align*}
    For each $v \not\in U$, let $\mathrm{Frob}_v$ be the associated Frobenius conjugacy class in $G_{K,U}$. Then by the Chebotarev density theorem applied to the extension $K_{U}/K$, one knows that the set $\bigcup_{v \not\in U} \mathrm{Frob}_v$ is dense in $G_{K,U}$. As such, if we are given a $\sigma \in G_{K,U}$, there exists a place $v \not\in U$  with the property that $\sigma \in \overline{\braket{\mathrm{Frob}_v}}$. But the latter is just the decomposition group $G_{K,v}$ relative to $v$ inside $G_{K,U}$. We thus see that
    \begin{align*}
        (V/T_E)^{G_{K,U}} = \bigcap_{v \not\in U} (V/T_E)^{G_{K,v}} = \bigcap_{v \not\in U} H^0(K_v, V/T_E)
    \end{align*}
    Now for each $v \not\in U$, the long exact sequence of $\Gal(K_v^\ur/K_v)$-cohomology of (\ref{tautological-sequence}) together with the assumed validity of $\mathrm{(H_3')}$ implies that there is a canonical identification
    \begin{align*}
        H^0(K_v, V/T_E) = H^0_f(K_v, V/T_E) \cong H^1_f(K_v, T_E)
    \end{align*}
    The desired equality (\ref{desired-annihilator-equality}) now follows via passage to annihilators after combining the above two facts.

    Turning now to the second claim, we fix a finite set of places $U$ of $K$ disjoint from $S(E')$ and choose $v \in U$. By using the resolution (\ref{finite-support-resolution}) one deduces the existence of a codescent isomorphism
    \begin{align*}
        H^1_f(K_v, T_{E'}) \otimes_{\cR[\cG_{E'}]} \cR[\cG_E] \cong H^1_f(K_v, T_E)
    \end{align*}
    By Hypothesis $\mathrm{(H_3')}$, the $\cR[\cG_{E'}]$-module $H^1_f(K_v, T_E)$ is of projective dimension one. In particular, its initial Fitting ideal is an invertible $\cR[\cG_E]$-module. We may therefore take Fitting ideals across the above isomorphism to deduce that the restriction map $\cR[\cG_{E'}] \to \cR[\cG_E]$ induces a surjective map
    \begin{align*}
        \Fitt_{\cR[\cG_{E'}]}^0(H^1_f(K_v, T_{E'})) \to \Fitt_{\cR[\cG_E]}^0(H^1_f(K_v, T_E))
    \end{align*}
    On the other hand, the assumed validity of $\mathrm{(H_3')}$ implies that for every $v \not\in U$, $H^1_f(K_v, T_E)$ is a cyclic $\cR[\cG_E]$-module. As such its $\cR[\cG_E]$-annihilator coincides with its initial Fitting ideal. The Lemma now follows by applying the first assertion of the Lemma with the set $U$ to $\cA_{T,E}$.
\end{proof}

We can now define an ideal of $\cR[[\Gal(\cK/K)]]$
\begin{align*}
    \cA_{T,\cK} := \varprojlim_{E \in \Omega(\cK/K)} \cA_{T,E}
\end{align*}
where the inverse limit is taken with respect to the maps $\pi_{E'/E}$. A straight-forward exercise in class field theory (see, for example, \cite[Lem. 3.5]{bdss}) implies that $\cA_{T,\cK}$ is non-zero.

\begin{aproposition}
    There is an inclusion $\cA_{T,\cK}\cdot\ES^{b}(T,\cK) \subseteq \ES_r(T,\cK)$.
\end{aproposition}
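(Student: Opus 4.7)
My plan is to reduce the asserted global inclusion to a level-wise statement and then to control the denominators that appear in the projection map $\Theta_{T,E}$ in terms of the torsion submodule of $H^1(\cO_{E,S(E)},T^*(1))$. Because $\cA_{T,\cK} = \varprojlim_E \cA_{T,E}$ and $\RES_r(T,\cK)$ are both defined via compatible systems indexed by $\Omega(\cK/K)$ with surjective transition maps (Lemma \ref{ideal-transition-map-lemma} for the former), it is enough to fix $E \in \Omega(\cK/K)$, a vertical system $z = (z_E)_E \in \VS(T,\cK)$ and an element $a = (a_E)_E \in \cA_{T,\cK}$, and to show the local containment
\begin{align*}
    a_E \cdot \Theta_{T,E}(z_E) \in \bidual^r_{\cR[\cG_E]} H^1(\cO_{E,S(E)}, T^*(1)).
\end{align*}
The Euler-system distribution relations for $a \cdot \Theta_{T,\cK}(z)$ are automatic because $\Theta_{T,\cK}(z)$ already lies in $\RES_r(T,\cK)$; only the exterior-bidual integrality at each level needs to be verified.

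For the local statement, write $X := H^1(\cO_{E,S(E)}, T^*(1))$ and recall from \cite[Prop. A.8]{bs} that $\bidual^r X$ is identified with those $x \in Q \otimes_R \exprod^r X$ such that $\Phi(x) \in \cR[\cG_E]$ for every $\Phi \in \exprod^r X^*$. Since $\cR[\cG_E]$ is $\cR$-free (hence $\cR$-torsion-free), one has $\Hom_{\cR[\cG_E]}(X_\tor,\cR[\cG_E]) = 0$, so that $X^* = X_\tf^*$ and any such $\Phi$ factors through $X_\tf$. The problem therefore reduces to proving that $a_E \cdot \Phi(\Theta_{T,E}(z_E)) \in \cR[\cG_E]$ for every $\Phi \in \exprod^r X_\tf^*$.

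To establish this, I would pursue the explicit approach used in the proof of Lemma \ref{bidual-compatibility-lemma} and \cite[Lem. A.11(ii)]{bs}: choose a quadratic standard representative $[P \xrightarrow{\psi} P]$ of $C_{E,S(E)}(T)$ and a basis $\{b_1,\dots,b_d\}$ of $P$ whose first $r$ images yield a compatible basis of $Y_K(T)^* \otimes \cR[\cG_E]$ via (\ref{complex-cohomology-sequence}). Then $\Theta_{T,E}(z_E)$ is represented, up to a unit, by the image under $\xi_{r<i\le d}\psi_i$ of $b_1 \wedge \cdots \wedge b_d$, viewed inside $Q \otimes_R \exprod^r X$ via the natural surjection $P \twoheadrightarrow X$. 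In the presence of $(\mathrm{H_3})$, i.e.\ when $X = X_\tf$, the argument in \cite[Lem. A.11(ii)]{bs} shows directly that $\Phi$-values of this element lie in $\cR[\cG_E]$. Without $(\mathrm{H_3})$ the only obstruction arises from the $\cR$-torsion in $X$: the tautological exact sequence $0 \to X_\tor \to X \to X_\tf \to 0$ implies that any denominator of $\Phi(\Theta_{T,E}(z_E))$ must be annihilated by $\Ann_{\cR[\cG_E]}(X_\tor) = \cA_{T,E}$. Multiplying by $a_E \in \cA_{T,E}$ therefore clears the denominators and yields the claim.

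The crux of the argument, and the step I expect to be the main obstacle, is the precise verification in this last paragraph that the failure of $\Theta_{T,E}$ to land in $\bidual^r X$ is \emph{exactly} controlled by $\Ann_{\cR[\cG_E]}(X_\tor)$. I would approach this either by (i) a direct linear-algebra computation, factoring the standard representative through a torsion-free quotient complex $C'$ of $C_{E,S(E)}(T)$ with $H^0(C') = X_\tf$ and comparing determinants, or (ii) by invoking Lemma \ref{tate-generalised-lemma}, which writes $\cA_{T,E}$ as the ideal generated by the products $\prod_{w/v}\Ann(H^1_f(E_w,T^*(1)))$ for $v$ outside any prescribed set, and tracking how these Euler factors act on the resolution $[P\xrightarrow{\psi} P]$ to annihilate the denominators explicitly. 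All remaining bookkeeping, namely the compatibility of these level-wise inclusions across the tower $\Omega(\cK/K)$, is immediate from the construction of $\cA_{T,\cK}$, $\VS(T,\cK)$ and the transition maps defining $\RES_r(T,\cK)$.
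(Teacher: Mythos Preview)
Your reduction to the level-wise statement is correct and matches the paper's first step. The identification of the crux --- that the failure of $\Theta_{T,E}$ to land in the bidual is controlled by $\Ann_{\cR[\cG_E]}(X_\tor)$ --- is also the right obstacle. However, you do not actually prove this; you assert it and then list two possible approaches. That is precisely where the work lies, and neither of your sketches is carried far enough to constitute a proof. In particular, the sentence ``the tautological exact sequence $0 \to X_\tor \to X \to X_\tf \to 0$ implies that any denominator of $\Phi(\Theta_{T,E}(z_E))$ must be annihilated by $\Ann_{\cR[\cG_E]}(X_\tor)$'' is exactly the statement to be proved, not a consequence of the tautological sequence alone: $\Theta_{T,E}(z_E)$ is defined via the determinant of a complex, and it is not a priori clear that the only obstruction to integrality comes from $H^0$-torsion rather than from the interaction of the differential $\psi$ with the torsion.

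The paper's proof is essentially a precise implementation of your approaches (i) and (ii) combined. Using Lemma \ref{tate-generalised-lemma} (your approach (ii)), one may assume $a_E$ lies in $\prod_{w\mid v}\Fitt^0_{\cR[\cG_E]}(H^1_f(E_w,T^*(1)))$ for a single place $v \notin S(E)$. Then, rather than passing to an ad hoc torsion-free quotient complex, the paper uses the $\Sigma$-modified complex $C_{E,S(E),\Sigma}(T)$ with $\Sigma = \{v\}$: by Lemma \ref{v-torsion-free-lemma} this has torsion-free $H^0$, and the exact triangle (\ref{change-sigma-triangle}) together with $(\mathrm{H_3'})$ yields the identification $\Det(C_{E,S(E),\Sigma}(T)) \cong \bigl(\prod_{w\mid v}\Fitt^0 H^1_f(E_w,T^*(1))\bigr)\cdot \Det(C_{E,S(E)}(T))$. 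Thus $a_E\cdot z_E$ lands in the determinant of an \emph{admissible} complex in the sense of \cite[Def.~2.20]{bs}, and \cite[Prop.~A.11(ii)]{bs} applies directly. This is the concrete ``torsion-free quotient complex $C'$'' you were looking for in approach (i); the point is that $\Sigma$-modification supplies it with a built-in comparison to the original determinant, which a naive quotient construction would not.
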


\begin{proof}
    Fix an element $a = (a_E)_E$ of $\cA_{T,\cK}$, a vertical system $(z_E)_E \in \VS(T,\cK)$ and a field $E \in \Omega(\cK/K)$. By definition, we are required to show that
    \begin{align*}
        a_E \cdot z_E \subseteq \bidual_{\cR[\cG_E]}^r H^1(\cO_{E,S(E)}, T)
    \end{align*}
    By the definition of the ideal $\cA_{T,E}$ and Lemma \ref{v-torsion-free-lemma} below, we may choose a singleton set $\Sigma = \set{v} \not\subseteq S(E)$ with the property that $H^1_{\Sigma}(\cO_{E,S(E)}, T)$ is $R$-torsion-free and assume that $a_E$ is of the form
    \begin{align*}
        a_E \in \Fitt_{\cR[\cG_E]}^0\left(H^1_f(K_v, T_E)\right)
    \end{align*}
    Note now that there are canonical isomorphisms of $\cR[\cG_E]$-modules
    \begin{align*}
        \Det_{\cR[\cG_E]}(C_{E,S(E), \Sigma}(T)) &\cong \Det_{\cR[\cG_E]}\left(\R\Gamma_f(K_v, T_E)\right)\otimes_{\cR[\cG_E]} \Det_{\cR[\cG_E]}(C_{E,S(E)}(T))\\
        &\cong \left(\Fitt_{\cR[\cG_E]}^0H^1_f(K_v, T_E)\right)\cdot \Det_{\cR[\cG_E]}(C_{E,S(E)}(T))
    \end{align*}
    where the first is induced by the exact triangle (\ref{change-sigma-triangle}) and the second by the assumed validity of $\mathrm{(H_3')}$.
    
    Since there is an inclusion $\bidual_{\cR[\cG_E]}^r H^1_{\Sigma}(\cO_{E,S(E)}, T) \subseteq \bidual_{\cR[\cG_E]}^r H^1(\cO_{E,S(E)}, T)$, we are reduced to showing that
    \begin{align*}
        \Theta_{T, E}(\Det_{\cR[\cG_E]}(C_{E,S(E), \Sigma}(T))) \subseteq \bidual_{\cR[\cG_E]}^r H^1_\Sigma(\cO_{E,S(E)}, T)
    \end{align*}
    
    To accomplish this, observe that since $H^1_{\Sigma}(\cO_{E,S(E)}, T)$ is $R$-torsion-free, the complex $C_{E,S(E),\Sigma}(T)$ constitutes an admissible complex of $\cR[\cG_E]$-modules in the sense of \cite[Def. 2.20]{bs}.
    
    The Theorem now follows immediately upon appealing to Prop. A.11(ii) of \textit{loc. cit.} with the data $(\cR, C, X)$ taken to be $(\cR[\cG_E], C_{E,S(E), \Sigma}(T), Y_E(T)^*)$ and the surjection $f$ to be the right-hand map of the exact sequence (\ref{complex-cohomology-sequence}).
\end{proof}

\begin{alemma}\label{v-torsion-free-lemma}
    Fix a field $E \in \Omega(\cK/K)$. Then for any singleton set $\Sigma := \set{v} \not\subseteq S(E)$ one has that $H^1_\Sigma(\cO_{E,S(E)}, T)$ is $R$-torsion-free.
\end{alemma}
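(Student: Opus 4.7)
The plan is to reduce the lemma to the injectivity of a localisation map on torsion, and then to establish that injectivity by replicating the Chebotarev-free portion of the argument in the proof of Lemma \ref{tate-generalised-lemma}.

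The long exact cohomology sequence attached to the defining triangle
\begin{align*}
    \R\Gamma_\Sigma(\cO_{E,S(E)}, T^*(1)) \to \R\Gamma(\cO_{E,S(E)}, T^*(1)) \to \bigoplus_{w\mid v} \R\Gamma_f(E_w, T^*(1))
\end{align*}
reads, in low degrees,
\begin{align*}
    H^0(\cO_{E,S(E)}, T^*(1)) \to \bigoplus_{w\mid v} H^0_f(E_w, T^*(1)) \to H^1_\Sigma(\cO_{E,S(E)}, T^*(1)) \to H^1(\cO_{E,S(E)}, T^*(1)) \xrightarrow{\mathrm{loc}_v} \bigoplus_{w\mid v} H^1_f(E_w, T^*(1)).
\end{align*}
The first term vanishes by $\mathrm{(H_2)}$ and the second vanishes by the acyclicity portion of $\mathrm{(H_3')}$, so that $H^1_\Sigma(\cO_{E,S(E)}, T^*(1))$ is identified with the kernel of $\mathrm{loc}_v$. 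It will therefore suffice to prove that the restriction of $\mathrm{loc}_v$ to the $R$-torsion submodule of its source is injective.

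To this end I would use Shapiro's lemma to replace $\mathrm{loc}_v$ by the analogous localisation map $H^1(\cO_{K,S(E)}, T^*(1)_E) \to H^1_f(K_v, T^*(1)_E)$ for the induced representation. Writing $\cT := T^*(1)_E$ and $V := Q\otimes_R \cT$, the proof of Lemma \ref{tate-generalised-lemma} supplies, via the connecting homomorphism of the tautological short exact sequence $0\to\cT \to V \to V/\cT\to 0$ and the assumed validity of $\mathrm{(H_2)}$, a canonical identification $H^1(\cO_{K,S(E)}, \cT)_\tor \cong (V/\cT)^{\Gal(K_{S(E)}/K)}$; similarly, the same tautological sequence gives $H^1_f(K_v,\cT)\cong (V/\cT)^{G_{K_v}}$, using that $\cT$ is unramified at $v\not\in S(E)$. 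By the naturality of these connecting homomorphisms under the map $G_{K_v}\to \Gal(K_{S(E)}/K)$, the composite torsion map will be identified with the tautological inclusion $(V/\cT)^{\Gal(K_{S(E)}/K)} \hookrightarrow (V/\cT)^{G_{K_v}}$, which is manifestly injective.

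The only delicate point is the naturality square identifying the composite of the torsion connecting homomorphism with an inclusion of invariants, but this is essentially a formality inherited from applying the same tautological exact sequence in both contexts. Notice that, unlike in the proof of Lemma \ref{tate-generalised-lemma} (where the full equality of $H^1_\tor$ with the intersection of all local $H^1_f$ uses Chebotarev density), only injectivity at the single place $v$ is needed here, so no density argument is required.
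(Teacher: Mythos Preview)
Your proposal is correct and follows essentially the same route as the paper's proof: identify $H^1_\Sigma$ with the kernel of the localisation map via the defining triangle, then use the tautological sequence $0\to\cT\to V\to V/\cT\to 0$ to rewrite both the global and local torsion as invariants of $V/\cT$, so that the restriction of $\mathrm{loc}_v$ to torsion becomes the tautological inclusion of invariant submodules. The only cosmetic differences are that the paper works directly over $E$ with the places $w\mid v$ (rather than passing to $K$ via Shapiro) and establishes the local identification $H^1_f(E_w,T^*(1))\cong H^0(E_w,(V/T)^*(1))$ by first proving $H^1_f(E_w,T^*(1))=H^1(E_w,T^*(1))_\tor$ via inflation--restriction, whereas you invoke the acyclicity clause of $\mathrm{(H_3')}$ more directly.
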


\begin{proof}
    An analysis of the long exact sequence of cohomology of the triangle (\ref{change-sigma-triangle}) implies that $H^1_\Sigma(\cO_{E,S(E)}, T)_\tor$ identifies with the kernel of the map
    \begin{align*}
        \Delta: H^1(\cO_{E,S(E)}, T)_\tor \to \bigoplus_{w \in \Sigma_E} H^1_f(E_w, T)
    \end{align*}
    Now fix a place $w$ of $E$ lying over $v$ and consider the inflation-restriction exact sequence
    \begin{align*}
        0 \to H^1_f(E_w, T) \to H^1(E_w, T) \to H^1(E_w^\ur, T)^{\Gal(E_w^\ur/E_w)}
    \end{align*}
    Observe that since $T$ has good reduction at $v$, there is an equality
    \begin{align*}
        H^1(E_w^\ur, T) = \Hom_{\cR}(\Gal(E_w^c/E_w^\ur), T)
    \end{align*}
    which is clearly $R$-torsion-free. As such we see that $H^1_f(E_w, T)$ (which is torsion by $\mathrm{(H_3')}$) identifies with $H^1(E_w, T)_\tor$. Write now $V = T \otimes_R Q$. Applying $E_w$-cohomology to the tautological sequence (\ref{tautological-sequence}) gives a canonical identification
    \begin{align*}
        H^1(E_w, T)_\tor \cong H^0(E_w, V/T)
    \end{align*}
    A similar argument also shows that $H^1(\cO_{E,S(E)}, T)_\tor$ identifies with $H^0(\cO_{E,S(E)}, V/T)$. Hence $\ker(\Delta)_\tor$ identifies with the kernel of the natural map
    \begin{align*}
        H^0(\cO_{E,S(E)}, V/T) \to \bigoplus_{w \in \Sigma_E} H^0(E_w, V/T)
    \end{align*}
    which is clearly injective.
\end{proof}

Given this construction we are now in a position to formulate a version of Question \ref{main-question} for weakly admissible pairs which can be seen as a direct analogue of \cite[Conj. 2.5]{bdss}:

\begin{aquestion}\label{weak-main-question}
    Is there an inclusion $\cA_{T,\cK}\cdot \ES_r(T,\cK) \subseteq \cA_{T,\cK}\cdot\ES^{b}(T,\cK)$?
\end{aquestion}

\begin{aremark}
        If $(T,\cK)$ is an admissible pair then, for every $E \in \Omega(\cK/K)$, $\cA_{T,E} = \cR[\cG_E]$ whence $\cA_{T,\cK} = \cR[[\Gal(\cK/K)]]$. Hence in this case Question \ref{weak-main-question} reduces to Question \ref{main-question}.
\end{aremark}
\iffalse
\begin{aremark}
    By adapting the Iwasawa-theoretic reinterpretation of vertical determinantal systems in Section \ref{iwasawa-theory-section} to weakly admissible pairs one can obtain an analogue of Theorem \ref{explicit-conditions-for-conj-theorem} and Theorem \ref{question-valid-theorem}. In particular, by replacing Greither's argument in the proof of Theorem \ref{question-valid-theorem} with the argument of \cite[Thm. 3.1]{bley} one can prove the following.
    
    Let $K$ be an imaginary quadratic field and suppose that $p > 3$, does not divide the class number of $K$ and splits in $K$. Let $\psi: G_K \to \overline{\QQ_p}^\times$ be a non-trivial abelian character of $G_K$ with fixed field $L$. Write $\cR$ for the finite $\ZZ_p$-algebra generated by the image of $\psi$. Denote by $T$ the $p$-adic representation $\cR(1) \otimes \psi^{-1}$ which, as an $\cR$-module, equals $\cR$ and upon which $G_K$ acts via $\chi_\mathrm{cyc}\psi^{-1}$. If $p$ neither ramifies nor splits completely in $L$ then Question \ref{weak-main-question} holds for the pair $(T,\cK'K_\infty)$ where $\cK'$ is the maximal abelian extension of $K$ unramified at the places in $S_\mathrm{min}(T)$ and $K_\infty$ is the unique $\ZZ_p$-extension of $K$ unramified outside $\fp$. In the interest of conserving space we omit the proof of this claim and prefer to justify it elsewhere.
\end{aremark}
\fi

\enlargethispage{2em}

\end{document}